\theoremstyle{plain}
\newtheorem{claim}{Claim}
\newtheorem{definition}{Definition}
\newtheorem{lemma}{Lemma}
\newtheorem{proposition}{Proposition}
\newtheorem{remark}{Remark}
\newtheorem{theorem}{Theorem}
\numberwithin{equation}{section}
\begin{document}
\title{Schr{\"o}dinger maps to a K{\"a}hler manifold in two dimensions}

\author{Benjamin Dodson and Jeremy L. Marzuola}
\date{\today}

\begin{abstract}
We prove a global well--posedness and scattering result for Schr{\"o}dinger maps to a general K{\"a}hler manifold with small initial data in a Besov space.
\end{abstract}

\maketitle

\section{Introduction}

Let $(\mathcal N, J, h)$ denote a K{\"a}hler manifold, where $\mathcal N$ is the manifold, $J$ is the almost complex structure, and $h$ is the metric on $\mathcal N$. A Schr{\"o}dinger map from a Euclidean space to a K{\"a}hler manifold is a map
\begin{equation}\label{1.1}
u : \mathbb{R} \times \mathbb{R}^{d} \rightarrow \mathcal N,
\end{equation}
that satisfies the equation
\begin{equation}\label{1.2}
u_{t} = J(\sum_{k = 1}^{d} \nabla_{k} \partial_{k} u), \qquad u|_{t = 0} = u_{0}.
\end{equation}
Here $\nabla$ denotes the induced covariant derivative in the pullback bundle from the metric $h$, $u^{\ast} T \mathcal N$.\medskip

Frequently, research has focused on the study of $(\ref{1.1})$ when $\mathcal N$ is two--dimensional and of constant curvature. When $\mathcal N = \mathbb{C}$, $J = i$, and $h$ is the usual Euclidean metric, $(\ref{1.1})$ reduces to the usual linear Schr{\"o}dinger equation
\begin{equation}\label{1.3}
u_{t} = i \Delta u, \qquad u|_{t = 0} = u_{0}, \qquad u : \mathbb{R} \times \mathbb{R}^{d} \rightarrow \mathbb{C}.
\end{equation}
This equation is well understood, as solutions have the form
\begin{equation}\label{1.4}
u(t, x) = (4 \pi t)^{-d/2} e^{-i \frac{d \pi}{4}} \int e^{i \frac{|x - y|^{2}}{4t}} u_{0}(y) dy,
\end{equation}
(see for example Chapter six of \cite{taylorpartial}).\medskip

When $\mathcal N$ has constant curvature $\pm 1$, $\mathcal N = \mathbb{S}^{2}$ or $\mathbb{H}^{2}$, then $(\ref{1.1})$ is a nonlinear equation. Let $J$ be the usual complex structure on $\mathbb{S}^{2}$ given by $u \times \cdot$ and let $h$ be the usual Riemannian metric on $\mathbb{S}^{2}$. Then $(\ref{1.2})$ becomes
\begin{equation}\label{1.5}
u_{t} = u \times \Delta u, \qquad u|_{t = 0} = u_{0}.
\end{equation}
The small data global well--posedness theory for $(\ref{1.3})$ has been well--studied for $u : \mathbb{R} \times \mathbb{R}^{d} \rightarrow \mathbb{S}^{2} \hookrightarrow \mathbb{R}^{3}$ in different dimensions $d$. In general, equation $(\ref{1.5})$ is $\dot{H}^{\frac{d}{2}}$--critical. Indeed, a solution to $(\ref{1.1})$ obeys the scaling symmetry
\begin{equation}\label{1.6}
u(t, x) \mapsto u(\lambda^{2} t, \lambda x), \qquad \text{for any} \qquad \lambda > 0.
\end{equation}
Since the $\dot{H}^{d/2}$ norm of the initial data is invariant under the scaling $(\ref{1.6})$, $(\ref{1.1})$ is called $\dot{H}^{d/2}$--critical.
\begin{remark}
Observe that the critical norm is independent of $\mathcal N$.
\end{remark}
\begin{theorem}[Small data global well--posedness]\label{t1}
For data that is small in $\dot{H}^{d/2}$ norm, global well--posedness for $(\ref{1.5})$ has been proved in all dimensions $d \geq 2$.
\end{theorem}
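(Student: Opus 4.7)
The plan is to recast $(\ref{1.5})$ as a system of magnetic nonlinear Schr\"odinger equations for the derivative fields of $u$ in a carefully chosen gauge, and then close a fixed-point argument in critical function spaces adapted to the Schr\"odinger flow. This is the strategy introduced by Bejenaru--Ionescu--Kenig--Tataru and refined in the caloric gauge by Tao, Smith and others; it is that framework one would adapt here.

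First I would fix an orthonormal frame $\{e_1,e_2\}$ on the pullback bundle $u^{\ast}T\mathbb{S}^{2}$ and introduce the complex derivative fields $\psi_k=\langle\partial_k u,e_1\rangle+i\langle\partial_k u,e_2\rangle$ together with the connection coefficients $A_k=\langle \nabla_k e_1,e_2\rangle$. Commuting $\partial_k$ through $(\ref{1.5})$ produces schematically
\[
(i\partial_t+\Delta)\psi_k = 2i A_\ell \partial_\ell \psi_k + (A_\ell A_\ell + i\partial_\ell A_\ell)\psi_k + \kappa |\psi|^{2}\psi_k,
\]
where $\kappa$ is the sectional curvature of $\mathbb{S}^{2}$. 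The next step is to impose a gauge: the Coulomb choice $\partial_\ell A_\ell=0$ writes $A$ as a bilinear nonlocal operator on $\psi$, while the caloric gauge obtained by parallel transporting the frame along the harmonic-map heat flow issuing from $u$ gives better decay of $A$ at spatial infinity; either is adequate for small data.

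Third, I would build critical function spaces $F^{s_c}$ and $N^{s_c}$ at regularity $s_c = d/2-1$ for $\psi$, which is the $\psi$-level regularity corresponding to $u\in\dot{H}^{d/2}$. For $d\geq 3$ this can be carried out with $U^{2}_{\Delta}/V^{2}_{\Delta}$-type atomic spaces and the full range of Strichartz estimates; in $d=2$, which is $L^{2}$-critical for $\psi$, one needs Besov-type refinements and very sharp bilinear Strichartz estimates to avoid logarithmic divergence. The core of the argument is a multilinear estimate of the schematic form
\[
\|A_\ell \partial_\ell \psi\|_{N^{s_c}} + \|A_\ell A_\ell \psi\|_{N^{s_c}} + \||\psi|^{2}\psi\|_{N^{s_c}} \lesssim \|\psi\|_{F^{s_c}}^{3},
\]
which closes a contraction on a small ball of $F^{s_c}$; the map $u$ is then reconstructed from $\psi$ by integrating the frame ODE.

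The principal obstacle is the high-low interaction in the magnetic term $A_\ell\partial_\ell\psi_k$: since $A\sim \nabla\Delta^{-1}(\psi\cdot\psi)$, one may place a low-frequency $\psi\cdot\psi$ factor against a high-frequency $\nabla\psi$, and the resulting derivative loss is borderline at the scaling regularity. Controlling this requires a frequency-envelope analysis together with sharpened bilinear Strichartz inequalities. In the endpoint $d=2$ the logarithmic obstruction is precisely what forces one out of $\dot{H}^{1}$ into a Besov framework like the one adopted in the present paper.
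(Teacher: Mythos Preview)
Theorem~\ref{t1} is not proved in this paper; it is a survey statement, and the paper's ``proof'' consists of the citations \cite{bejenaru2007global} ($d\geq 4$), \cite{ionescu2006low} ($d\geq 3$), \cite{bejenaru2011global} ($d\geq 2$), together with the local theory of \cite{mcgahagan2007approximation}. Your proposal is therefore not to be compared against a proof in the paper but against the content of those references, and as a high-level sketch of the Bejenaru--Ionescu--Kenig--Tataru strategy it is accurate: derivative fields in a good gauge, a magnetic NLS system, critical function spaces, and a multilinear contraction with the magnetic term $A_\ell\partial_\ell\psi$ as the main enemy.

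One point deserves correction. Your final sentence says that in $d=2$ the logarithmic obstruction ``forces one out of $\dot H^{1}$ into a Besov framework like the one adopted in the present paper.'' That conflates two different results. The $d=2$ case of Theorem~\ref{t1} is \cite{bejenaru2011global}, and there the data \emph{is} small in $\dot H^{1}$; what is abandoned is not the Sobolev smallness hypothesis but the naive Strichartz/$U^2$ iteration space, replaced by an elaborate hierarchy of lateral Strichartz and local-smoothing norms. The Besov hypothesis $B^{1}_{\infty,2}$ is the innovation of the present paper's Theorem~\ref{t1.1}, which is a \emph{different} (and strictly more general) statement, proved by a different and simpler argument. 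So the Besov framework is not what was needed to establish Theorem~\ref{t1}; it is what allows the present authors to re-prove and extend it with less machinery.
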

Theorem $\ref{t1}$ was proved in \cite{bejenaru2007global} for dimensions $d \geq 4$, \cite{ionescu2006low} for dimensions $d \geq 3$ and \cite{bejenaru2011global} for dimensions $d \geq 2$. The proof of global well--posedness in \cite{bejenaru2007global},  \cite{ionescu2006low}, and \cite{bejenaru2011global} relies on the local well--posedness result of \cite{mcgahagan2007approximation}. Specifically, $(\ref{1.1})$ is locally well--posed for sufficiently smooth initial data. Therefore, for smooth data (which will be precisely defined later) with sufficiently small $\| u_{0} \|_{\dot{H}^{d/2}} \ll 1$,
\begin{equation}
\label{1.7}
\| u(t) \|_{H^{\sigma}} \leq C(\sigma) \| u_{0} \|_{H^{\sigma}},
\end{equation}
for any $\sigma \in \mathbb{N}$, which implies global well--posedness.\medskip

Dimension $d = 2$ is particularly important due to the existence of a conserved energy for $(\ref{1.5})$,
\begin{equation}\label{1.8}
E(t) = \int_{\mathbb{R}^{d}} \sum_{m = 1}^{d} |\partial_{m} u(t, x)|^{2} dx.
\end{equation}
Since $(\ref{1.6})$ controls the $\dot{H}^{1}$--norm, it is reasonable to conjecture that the local well--posedness result can be extended to a global well--posedness result for solutions below the energy of the soliton solution. This problem remains open, although the result has been obtained for solutions to $(\ref{1.1})$ with equivariant symmetry, see \cite{bejenaru2013equivariant}.   In the $2$-equivariant case, Besov regularity recently played a role in the analysis of \cite{bejenaru2024near}.  \medskip

When $\mathcal N = \mathbb{H}^{2}$, global well--posedness for equivariant solutions to $(\ref{1.1})$ is known, see \cite{bejenaru2016equivariant}. There is no soliton when the target manifold is $\mathcal N = \mathbb{H}^{2}$, so global well-posedness holds for any initial data with finite energy. The proof in \cite{bejenaru2013equivariant} and \cite{bejenaru2016equivariant} utilizes Morawetz estimates and the local energy estimates in \cite{grillakis2002lagrangian} adapted to the equivariant problem.\medskip

The above results were generalized to the case when $\mathcal N$ is a general two dimensional compact K{\"a}hler manifold in Theorem $1.1$ of \cite{li2018global}.  We further generalize this result to initial data lying in a Besov space that contains small energy initial data. There are three reasons why such a result is useful. The first is that the Besov space strictly contains small energy, and thus is a broader class of initial data. The second reason is that to generalize the results in \cite{bejenaru2013equivariant} and \cite{bejenaru2016equivariant} to all initial data with finite energy below the soliton such a result will probably be needed in order to utilize the concentration compactness argument. The third reason is that the proof here is substantially simpler than the arguments in \cite{bejenaru2011global} and  \cite{li2018global}. In particular, the proof uses fairly ``ordinary" function spaces, mostly Strichartz spaces and bilinear Strichartz spaces.\medskip

Before stating the result of this paper, it is useful to explain a few of the reasons why the result is stated in the way that it is. When the target $\mathcal N$ is a curved manifold, assigning function spaces to contain a map $u : \mathbb{R}^{d} \rightarrow \mathcal N$ is necessarily complicated by the curvature of the target, and there are several issues to consider. One such issue is the lack of a canonical zero. Recalling $(\ref{1.4})$ in the case when $\mathcal N = \mathbb{C}$ and $J = i$, observe that if $u_{0} \in L^{1}(\mathbb{R}^{d})$, $\| u \|_{L^{\infty}} \lesssim t^{-d/2} \| u_{0} \|_{L^{1}}$. However, analyzing the dispersive nature of $u : \mathbb{R} \times \mathbb{R}^{d} \rightarrow \mathcal N = \mathbb{S}^{2}$ is necessarily complicated by the fact that if $\mathcal N = \mathbb{S}^{2}$ (for example), $|u(t, x)| \equiv 1$ everywhere. Following \cite{chang2000schrodinger}, we choose $Q \in \mathcal N$ and show that if $u_{0} - Q$ lies in some function space, then the solution $u(t,x)$ tends toward $Q$ in some space.\medskip

A second issue is that Banach spaces are closed under multiplication by scalars. However, again if $\mathcal N = \mathbb{S}^{2}$ (for example), $|u(t, x)| \equiv 1$, and thus, $\lambda u(t, x)$ does not lie in $\mathbb{S}^{2}$ for any $\lambda$ such that $|\lambda| \neq 1$. If $u$ lies close to $Q$ in $L^{\infty}$ norm, one option is to use a coordinate projection in a neighborhood of $Q$. This is exactly what \cite{chang2000schrodinger} does. However, for initial data lying in the function space $\dot{H}^{d/2}(\mathbb{R}^{d})$, the coordinate projection is no longer as useful, even for small data. This is because $\dot{H}^{d/2}$ does not embed into $L^{\infty}(\mathbb{R}^{d})$, and thus, if $u$ has small $\dot{H}^{d/2}$ norm, $u$ need not lie close to $Q$. A second option is to use the extrinsic formulation of the function space. That is, since $\mathcal N \hookrightarrow \mathbb{R}^{N}$ for some $N$, we define the function space
\begin{equation}\label{1.9}
X = \{ f : \mathbb{R}^{d} \rightarrow \mathbb{R}^{N} : \| f - Q \|_{X} < \infty \},
\end{equation}
where $X$ is a Banach norm on functions mapping from $\mathbb{R}^{d}$ to $\mathbb{R}^{N}$. This is the space that we shall use. 

We will define the Sobolev spaces and Littlewood--Paley theory using the extrinsic definition.
\begin{definition}\label{d1.1}
Suppose $\mathcal N$ is a compact K{\"a}hler manifold, and suppose $Q \in \mathcal N$ and $\mathcal N \subset \mathbb{R}^{N}$ for some integer $N$.

Then define the extrinsic Sobolev space $H_{Q}^{k}$ by
\begin{equation}\label{1.10}
H_{Q}^{k} = \{ u : \mathbb{R}^{d} \rightarrow \mathbb{R}^{N} : u(x) \in \mathcal N, \qquad a.e. \qquad \text{in} \qquad \mathbb{R}^{d}, \qquad \| u - Q \|_{H^{k}(\mathbb{R}^{d})} < \infty \},
\end{equation}
which is equipped with the metric $d_{Q}(f, g) = \| f - g \|_{H^{k}}$. Define $\mathcal H_{Q}$ to be
\begin{equation}\label{1.11}
\mathcal H_{Q} = \cap_{k = 1}^{\infty} H_{Q}^{k}.
\end{equation}
\end{definition}
This definition was used in \cite{bejenaru2011global} for the sphere $S^{2} \hookrightarrow \mathbb{R}^{3}$. This definition was also used in \cite{li2021global} for a general compact K{\"a}hler manifold.

We will utilize the spaces $B_{p, q}^{s}$, the usual Besov spaces with norm
\begin{equation}\label{1.12}
\| f \|_{B_{p, q}^{s}} = (\sum_{j}  2^{jps}  \| P_{j} f \|_{L^{q}}^{p})^{1/p},
\end{equation}
with the obvious modifications when $p = \infty$, $q = \infty$, or both. Here, $P_{j}$ is the standard Littlewood--Paley projection operator.\medskip

\begin{theorem}\label{t1.1}
Let $d = 2$.  For $n \geq 1$, let $\mathcal N$ be a $2n$-dimensional compact K{\"a}hler manifold that is isometrically embedded into $\mathbb{R}^{N}$, and let $Q \in \mathcal N$ be a given point. There exists a sufficiently small constant $\epsilon_{\ast}(\| u_{0} - Q \|_{\dot{H}^{1}}) \ll 1$ such that if $u_{0} \in \mathcal H_{Q}$ satisfies
\begin{equation}\label{1.13}
\| u_{0} - Q \|_{B_{\infty, 2}^{1}} \leq \epsilon_{\ast} \ll 1,
\end{equation}
then $(\ref{1.1})$ with initial data $u_{0}$ evolves into a global unique solution $u \in C(\mathbb{R}; \mathcal H_{Q})$. Moreover, as $|t| \rightarrow \infty$ the solution $u$ converges to a constant map in the sense that
\begin{equation}\label{1.14}
\lim_{|t| \rightarrow \infty} \| u(t) - Q \|_{L_{x}^{\infty}} = 0.
\end{equation}
Furthermore, in the energy space properly interpreted using the complex structure on $\mathcal N$,  we also have 
\begin{equation}\label{1.15}
\lim_{t \rightarrow \infty} \| u(t) -  \sum_{j=1}^n (Re(e^{it \Delta} h_{+}^{j}) -  Im(e^{it \Delta} g_{+}^{j})) \|_{\dot{H}^{1}} = 0,
\end{equation}
for some functions $h_{+}^{j}$, $g_{+}^{j} : \mathbb{R}^{2} \rightarrow \mathbb{C}^{n}$ belonging to $\dot{H}^{1}$ with $j = 1,\dots,n$.
\end{theorem}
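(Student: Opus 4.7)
I would recast (\ref{1.2}) in the extrinsic formulation of Definition~\ref{d1.1}. Embedding $\mathcal{N}\hookrightarrow\mathbb{R}^{N}$ and extending the complex structure to a tubular neighborhood by $\tilde{J}(u):=J(u)\,P_{T_{u}\mathcal{N}}$, the equation becomes $\partial_{t}u=\tilde{J}(u)\Delta u$ for $\mathbb{R}^{N}$-valued $u$; setting $v:=u-Q$ and using $\partial_{t}Q=\Delta Q=0$ gives
\[
\partial_{t}v-\tilde{J}(Q)\Delta v \;=\; \bigl(\tilde{J}(u)-\tilde{J}(Q)\bigr)\Delta v \;=:\; \mathcal{F}(v).
\]
Choosing an orthonormal frame $\{e_{1},\ldots,e_{2n}\}$ for $T_{Q}\mathcal{N}$ with $J(Q)e_{2j-1}=e_{2j}$ identifies $T_{Q}\mathcal{N}\cong\mathbb{C}^{n}$ and makes $J(Q)$ act as multiplication by $i$, so $e^{t\tilde{J}(Q)\Delta}$ becomes a direct sum of $n$ copies of the standard $\mathbb{R}^{2}$ Schr\"odinger propagator on the tangent block. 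I would then construct the solution as a fixed point of the Duhamel map
\[
\Phi(v)(t)=e^{t\tilde{J}(Q)\Delta}v_{0}+\int_{0}^{t}e^{(t-s)\tilde{J}(Q)\Delta}\mathcal{F}(v)(s)\,ds
\]
on a small ball of a Banach space $X$ whose norm controls (i) $L_{t}^{\infty}B_{\infty,2}^{1}$, (ii) a square-summed dyadic Strichartz norm such as $\bigl(\sum_{k}2^{2k}\|P_{k}v\|_{L^{4}_{t,x}}^{2}\bigr)^{1/2}$ adapted to the Besov scaling of (\ref{1.13}), and (iii) a bilinear Strichartz norm at the $\dot{H}^{1}$ level to recover the missing Strichartz endpoint. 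Because $u_{0}-Q\in\mathcal{H}_{Q}$ places each $P_{k}v_{0}$ in $L^{2}$, the linear bound $\|e^{t\tilde{J}(Q)\Delta}v_{0}\|_{X}\lesssim\|v_{0}\|_{B_{\infty,2}^{1}}$ reduces block by block to standard $L^{2}$-based Strichartz together with Bernstein.

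\textbf{The main nonlinear estimate} to close the contraction is $\|\mathcal{F}(v)\|_{X^{\ast}}\lesssim\|v\|_{X}^{2}$ (and its difference version). Taylor-expanding $\tilde{J}(u)-\tilde{J}(Q)=D\tilde{J}(Q)(v)+O(v^{2})$ isolates a critical quasilinear piece $D\tilde{J}(Q)(v)\cdot\Delta v$ with two derivatives concentrated on one factor, plus subcritical higher-order contributions that fit comfortably into $L^{4}_{t,x}\cdot L^{4}_{t,x}$ Strichartz pairings. For the critical piece I would Littlewood--Paley decompose both factors: in high-high interactions one derivative is transferred to the low-frequency factor via Bernstein and the result is closed by Strichartz; in high-low interactions I would invoke the $\mathbb{R}^{2}$ bilinear Strichartz estimate, whose $(M/N)^{1/2}$ gain absorbs precisely the lost derivative. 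The permitted dependence $\epsilon_{\ast}=\epsilon_{\ast}(\|u_{0}-Q\|_{\dot{H}^{1}})$ enters here because the K\"ahler structure makes the energy (\ref{1.8}) conserved, so $\|v(t)\|_{\dot{H}^{1}}$ can appear as a bounded but not necessarily small prefactor on certain subcritical terms without spoiling the bootstrap.

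\textbf{Main obstacle and asymptotic behavior.} The principal obstacle is the quasilinear character of the equation in the borderline dimension $d=2$: since the $L^{2}_{t}L^{\infty}_{x}$ endpoint Strichartz fails, the critical interaction $v\,\Delta v$ is intractable by naive Strichartz pairings and the frequency gain from bilinear Strichartz is indispensable. Moreover, the $\ell^{2}$ summation built into $B_{\infty,2}^{1}$ forces the entire nonlinear argument to be run block by block in Littlewood--Paley and combined only at the final square-sum, which is the delicate accounting on which the whole argument hinges. Once $v\in X$ is produced, finiteness of $\|\mathcal{F}(v)\|_{X^{\ast}}$ on all of $\mathbb{R}$ makes the Duhamel tail converge as $|t|\to\infty$ to a linear scatterer $v_{+}\in\dot{H}^{1}$; decomposing $v_{+}$ along the tangent frame $\{e_{2j-1},e_{2j}\}$ furnishes the complex data $h_{+}^{j}$, while the second-order normal correction forced by the constraint $u\in\mathcal{N}$ (naturally organized by $J(Q)$) furnishes the $g_{+}^{j}$, yielding (\ref{1.15}). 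Finally, (\ref{1.14}) follows from the block-wise dispersive $L^{1}\to L^{\infty}$ decay of $e^{it\Delta}$ combined with the strong $\mathcal{H}_{Q}$-regularity of $u_{0}$ to provide the needed $L^{1}$ control at low frequencies.
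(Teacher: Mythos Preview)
Your approach has a genuine gap at the quasilinear step. The leading nonlinear term in your extrinsic formulation is $D\tilde J(Q)(v)\,\Delta v$, a quadratic interaction carrying \emph{two} derivatives on the high-frequency factor. In the high--low paraproduct $P_{j}v\cdot\Delta P_{k}v$ with $j\ll k$, the bilinear Strichartz estimate provides only a gain of $2^{(j-k)/2}$, i.e.\ half a derivative of frequency separation; this is insufficient to absorb the full $2^{2k}$ coming from $\Delta$, and the sum over $j$ (or the square-sum over $k$) does not close in any $B^{1}_{\infty,2}$-adapted Strichartz/bilinear space. In particular, a Duhamel contraction of the type you describe cannot work for this equation: it is genuinely quasilinear with a true derivative loss, and such problems are not solvable by fixed-point arguments in standard Strichartz-type spaces. (There is also a secondary issue: $\tilde J(Q)=J(Q)P_{T_{Q}\mathcal N}$ annihilates the normal directions, so $e^{t\tilde J(Q)\Delta}$ is not a Schr\"odinger group on $\mathbb{R}^{N}$ but a degenerate semigroup, and $v=u-Q$ is not tangent to $\mathcal N$ at $Q$; your linear estimate $\|e^{t\tilde J(Q)\Delta}v_{0}\|_{X}\lesssim\|v_{0}\|_{B^{1}_{\infty,2}}$ is therefore not available as stated.)

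The paper avoids this obstruction by \emph{not} working with $v=u-Q$ directly. Instead it passes to the derivative fields $\psi_{m}=\langle e_{\alpha},\partial_{m}u\rangle+i\langle Je_{\alpha},\partial_{m}u\rangle$ in a moving frame, and chooses the \emph{caloric gauge} (built from the harmonic map heat flow, whose well-posedness under the Besov smallness is established first). In this gauge the evolution becomes the semilinear Schr\"odinger system $(i\partial_{t}+\Delta)\psi_{m}=-2iA_{l}\partial_{l}\psi_{m}+(\text{cubic})$, and the crucial point is that the connection $A_{x}$ carries one \emph{fewer} derivative than $\psi_{x}^{2}$ (schematically $A_{x}\sim\int_{s}^{\infty}\bar\psi\,\partial\psi$), so $A\partial\psi$ is effectively cubic with no net derivative loss. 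Local existence is imported from McGahagan's approximation argument rather than contraction, and global control is then obtained by a bootstrap on interaction-Morawetz bilinear estimates $\|(P_{j}\psi)(P_{k}\psi)\|_{L^{2}_{t,x}}\lesssim2^{-|j-k|/2}v_{j}v_{k}$ together with mass-change and $L^{4}$ lemmas (Lemmas~\ref{lmass}--\ref{lL4}); scattering for $u$ is then read off from $L^{2}$ scattering of $\psi$. The gauge step is the missing idea in your plan.
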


\begin{remark}
Our scattering result \eqref{1.15} will actually be proven as an $L^2$ scattering result on a gauged system of equations that will be introduced shortly, then the result \eqref{1.15} can follow as in \cite{li2018global}, Section $7.5$, in particular the arguments leading to equation $(7.29)$.
\end{remark}

Notice that Theorem $\ref{t1.1}$ is a generalization of Theorem $1.1$ in \cite{li2018global} and also Theorem $1.1$ \cite{bejenaru2011global}. Both \cite{bejenaru2011global} and \cite{li2018global} studied Schr{\"o}dinger maps for small initial data in $B_{2, 2}^{1} = \dot{H}^{1}$. Equivalently,
\begin{equation}\label{1.16}
\| \nabla u \|_{L^{2}} \leq \epsilon_{\ast} \ll 1.
\end{equation}
However, while $(\ref{1.16})$ has a natural intrinsic formulation, $(\ref{1.14})$ does not, and therefore must be formulated extrinsically.

\subsection{Outline of argument}
The argument in \cite{bejenaru2011global} and \cite{li2018global} studied the derivative of the solution to $(\ref{1.1})$, which satisfies the equation
\begin{equation}\label{1.17}
(i \partial_{t} + \Delta_{x}) \psi_{m} = -2i \sum_{l = 1}^{d} A_{l} \partial_{l} \psi_{m} + (A_{d + 1} + \sum_{l = 1}^{d} (A_{l}^{2} - i \partial_{l} A_{l})) \psi_{m} - i \sum_{l = 1}^{d} Im(\bar{\psi}_{l} \psi_{m}) \psi_{l},
\end{equation}
under a suitable gauge condition (the caloric gauge). This is what we will do as well. Here, $(v, w)$ is a suitable frame in $T_{u} \mathcal N$,
\begin{equation}\label{1.18}
\psi_{m} = v \cdot \partial_{m} u + i w \cdot \partial_{m} u,
\end{equation}
and
\begin{equation}\label{1.19}
A_{m} = w \cdot \partial_{m} v.
\end{equation}
\begin{remark}
For convenience let $\psi_{x}(t)$ denote the vector with components $\psi_{m}(t)$, $m = 1, 2$.
\end{remark}

The next step of the proof is to use the result of \cite{mcgahagan2007approximation} to prove local well--posedness.
\begin{theorem}[Local in Time $H^3$ result from \cite{mcgahagan2007approximation}]\label{t8.1}
For $u_{0} \in H^{3}$, there exists $$T(\| u_{0} \|_{H^{3}}) > 0$$ such that $(\ref{1.2})$ is locally well-posed on the interval $[-T, T]$. Furthermore, the solution $$u \in C([-T, T]; H^{3}) \cap L^{\infty}([-T, T]; H^{3})$$ and $\| u \|_{L_{t}^{\infty} H_{x}^{3}([-T, T] \times \mathbb{R}^{2})} \leq 2 \| u_{0} \|_{H^{3}}$.
\end{theorem}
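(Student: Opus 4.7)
The plan is to follow a wave-map approximation scheme in the spirit of \cite{mcgahagan2007approximation}. Since \eqref{1.2} is a quasilinear geometric PDE whose nonlinear target precludes direct fixed-point arguments on a Banach space, I would approximate it by a damped wave map system: for $\epsilon > 0$, consider
\begin{equation*}
\epsilon \nabla_{t} \partial_{t} u^{\epsilon} + J \partial_{t} u^{\epsilon} = \sum_{k=1}^{d} \nabla_{k} \partial_{k} u^{\epsilon},
\end{equation*}
with $u^{\epsilon}|_{t=0} = u_{0}$ and initial velocity chosen compatible with the Schr\"odinger limit, $\partial_{t} u^{\epsilon}|_{t=0} = J\sum_{k} \nabla_{k} \partial_{k} u_{0}$. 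For each fixed $\epsilon > 0$ this is a quasilinear wave-map type system admitting a local-in-time $H^{3}$ solution by standard energy methods. Formally, applying $J$ to both sides gives $\partial_{t} u^{\epsilon} = J \sum_{k} \nabla_{k} \partial_{k} u^{\epsilon} + O(\epsilon)$, recovering \eqref{1.2} in the limit.

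Next, I would derive a priori $H^{3}$ bounds uniform in $\epsilon$. Working in the extrinsic embedding $\mathcal{N} \hookrightarrow \mathbb{R}^{N}$, differentiate the equation up to three times and pair with suitable combinations of $\partial_{t} u^{\epsilon}$ and $\nabla u^{\epsilon}$. The skew-symmetry of $J$ with respect to $h$ together with the K\"ahler compatibility $\nabla J = 0$ provide the key cancellations at the top derivative order, while the remaining terms (involving curvature and the second fundamental form of the embedding) are controlled through Sobolev embedding and Gagliardo--Nirenberg. This produces a Gronwall-type inequality giving, on some interval $[-T, T]$ with $T = T(\|u_{0}\|_{H^{3}})$, the bound $\|u^{\epsilon}(t)\|_{H^{3}} \leq 2\|u_{0}\|_{H^{3}}$. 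Aubin--Lions compactness then yields a limit $u \in L^{\infty}_{t} H^{3}_{x}$, which a standard energy argument upgrades to $C([-T,T]; H^{3})$.

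Uniqueness is the last step. Two $H^{3}$ solutions $u, \tilde{u}$ produce a difference $u - \tilde{u}$ that lives in $\mathbb{R}^{N}$ rather than intrinsically on $\mathcal{N}$, so I would estimate $\|u - \tilde{u}\|_{L^{2}}$ by parallel transport along the short geodesic joining $u(t,x)$ to $\tilde{u}(t,x)$; this is well-defined because the $H^{3}$ bound forces $u$ and $\tilde{u}$ to be uniformly close in $L^{\infty}$ on $[-T,T]$ for $T$ small. A Gronwall inequality in the transported $L^{2}$ distance then closes the argument. The main obstacle throughout is the uniform $H^{3}$ estimate: a careless commutator of three covariant derivatives with the equation loses regularity, and producing the proper cancellations at the top order using the K\"ahler structure and the isometric nature of $J$ is the technical heart of the argument.
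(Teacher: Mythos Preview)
Your outline correctly captures the approach of \cite{mcgahagan2007approximation}: the damped wave-map approximation, uniform-in-$\epsilon$ energy estimates exploiting the K\"ahler structure, compactness to pass to the limit, and uniqueness via a geodesic/parallel-transport distance. Note, however, that the paper itself does not prove this theorem at all; it is quoted as a black-box local well-posedness result from the cited reference and then used as input to the bootstrap argument.
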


Since $u \in C([-T, T]; H^{3})$, there exists some $0 < \delta(\| u_{0} \|_{H^{3}}) \leq T$ such that
\begin{equation}\label{Besov}
\| \psi_{x}(t) \|_{B_{\infty, 2}^{0}} \leq 2 \epsilon^{\ast}, \qquad \text{for all} \qquad t \in [-\delta, \delta].
\end{equation}

\noindent By the Sobolev embedding theorem and Bernstein's inequality,
\begin{equation}\label{8.1}
\| P_{k} \psi_{x}(t) \|_{L_{x}^{4}(\mathbb{R}^{2})} \lesssim 2^{k/2} \| P_{k} \psi_{x}(t) \|_{L_{x}^{2}(\mathbb{R}^{2})} \lesssim 2^{k/2} \inf \{ 1, 2^{-3k} \} \| \psi_{x}(t) \|_{H_{x}^{3}(\mathbb{R}^{2})}.
\end{equation}
Therefore, by H{\"o}lder's inequality,
\begin{equation}\label{8.2}
\aligned
& \| P_{k} \psi_{x}(t) \|_{L_{t,x}^{4}([-T, T] \times \mathbb{R}^{2})} \lesssim 2^{k/2} T^{1/4} \inf \{ 1, 2^{-3k} \} \| \psi_{x}(t) \|_{H_{x}^{3}(\mathbb{R}^{2})} \\
&  \lesssim 2^{k/2} T^{1/4} \inf \{ 1, 2^{-3k} \} \| u_{0} \|_{H_{x}^{3}(\mathbb{R}^{2})}.
\endaligned
\end{equation}
Hence,
\begin{equation}\label{8.3}
(\sum_{k} \| P_{k} \psi_{x}(t) \|_{L_{t,x}^{4}([-T, T] \times \mathbb{R}^{2})}^{2})^{1/2} \lesssim T^{1/4} \| u_{0} \|_{H_{x}^{3}(\mathbb{R}^{2})}.
\end{equation}

For $T(\| u_{0} \|_{H^{3}})$ sufficiently small,
\begin{equation}\label{8.4}
(\sum \| P_{k} \psi_{x}(t) \|_{L_{t,x}^{4}([-T, T]}^{2})^{1/2} \leq \epsilon \ll 1.
\end{equation}
Therefore, we can use Theorem $1.2$ from \cite{dodsonsmith}, which essentially states that if 
$$\| \nabla \phi \|_{\ell^2 L^4_{t,x} ( I \times \mathbb{R}^2)} \leq \epsilon \ll 1$$ 
on some interval $I$, then the interval of existence can be extended.
\begin{remark}
\label{rem:DS15}
Two issues need to be addressed here. The first is that the result of \cite{dodsonsmith} is on $\mathcal M \in \{ \mathbb{S}^{2}, \mathbb{H}^{2} \}$, while here the manifold $\mathcal M$ is a more general K{\"a}hler manifold. Secondly, the result in \cite{dodsonsmith} requires that the initial data lie below the soliton, that is $E < E_{crit}$. We do not require that here either.

This is because the small Besov norm $(\ref{Besov})$ guarantees well-posedness of the harmonic map heat flow, which in turn implies that a caloric gauge exists. This is proved in Section \ref{sec:Gauges}. As a result, the computations in \cite{dodsonsmith} can be carried over to this setting as well.
\end{remark}

Theorem $4.3$ of \cite{dodsonsmith} states that if there exists $I$ such that $\| \psi_x \|_{\ell^2 L^4_{t,x} ( I \times \mathbb{R}^2) } \leq \epsilon \ll 1$, then for $|j-k|>10$, 
\begin{equation}
\label{eqn:DSthm43}
\| ( P_j  \psi_x (s)  )(  P_k \psi_x (s') )  \|_{L^2_{t,x} ( I \times \mathbb{R}^2)} \lesssim  2^{- \frac{|j-k|}{2} } v_j v_k (1 + s 2 ^{2j})^{-4} (1 + s'  2 ^{2k})^{-4} .
\end{equation} 
In particular, if we have smallness of the $L^4$ norm on $I$, then we have a bilinear estimate on that interval similar to the linear one.  Similar modifications as mentioned in Remark \ref{rem:DS15} hold for this estimate.

Applying Theorem $4.3$ of \cite{dodsonsmith} as is at the moment, let
\begin{equation}\label{8.5}
\alpha_{k} = \sup_{j \in \mathbb{Z}} 2^{-\delta |k - j|} \| P_{j} \psi_{x} \|_{L_{t,x}^{4}([-T, T])}, \qquad \beta_{k} = \sup_{j \in \mathbb{Z}} 2^{-\delta |k - j|} \| P_{j} \psi_{x}(0) \|_{L^{2}}, \qquad v_{k} = \alpha_{k} + \beta_{k},
\end{equation}
and for any $\sigma \in \mathbb{N}$,
\begin{align}\label{8.5.1}
\alpha_{k}(\sigma) & = \sup_{j \in \mathbb{Z}} 2^{-\delta |k - j|} 2^{j \sigma} \| P_{j} \psi_{x} \|_{L_{t,x}^{4}([-T, T])}, \ \ \beta_{k} = \sup_{j \in \mathbb{Z}} 2^{-\delta |k - j|} 2^{j \sigma} \| P_{j} \psi_{x}(0) \|_{L^{2}}, \\
& \ v_{k}(\sigma) = \alpha_{k}(\sigma) + \beta_{k}(\sigma). \notag
\end{align}
Theorem $4.3$ of \cite{dodsonsmith} implies
\begin{equation}\label{8.6}
\| (P_{j} \psi_{x}(s)) (P_{k} \psi_{x}(s')) \|_{L_{t,x}^{2}([-T, T] \times \mathbb{R}^{2})} \lesssim 2^{-\frac{|j - k|}{2}} v_{j} v_{k} (1 + s 2^{2j})^{-3} (1 + s' 2^{2k})^{-3}
\end{equation}
and
\begin{equation}\label{8.6.1}
\| (P_{j} \psi_{x}(s)) (P_{k} \psi_{x}(s')) \|_{L_{t,x}^{2}([-T, T] \times \mathbb{R}^{2})} \lesssim 2^{-\frac{|j - k|}{2}} 2^{-k \sigma} v_{j} v_{k}(\sigma) (1 + s 2^{2j})^{-3} (1 + s' 2^{2k})^{-3}.
\end{equation}
From $(\ref{8.6})$ and $(\ref{8.6.1})$,
\begin{equation}\label{8.8}
\sup_{t \in [-\delta, \delta]} \| P_{k} \psi_{x}(t) \|_{L^{2}} \leq \| P_{k} \psi_{x}(0) \|_{L^{2}} + O(\epsilon^{3}) \leq \frac{3}{2} \epsilon^{\ast}
\end{equation}
and
\begin{equation}\label{8.7}
\| \psi_{x}(t) \|_{L_{t}^{\infty} H^{3}([-\delta, \delta] \times \mathbb{R}^{3})} \leq \frac{3}{2} \| \psi_{x}(0) \|_{H^{3}}.
\end{equation}
Therefore, by continuity, $(\ref{8.8})$ and $(\ref{8.7})$ can be extended to $[-T, T]$.\medskip

Now suppose that for some interval $I \subset \mathbb{R}$, $0 \in I$, $(\ref{1.2})$ has a solution on $I \subset \mathbb{R}$ that satisfies
\begin{equation}\label{8.9}
\aligned
\| (P_{k} \psi_{x}(s))(P_{j} \psi_{x}(s')) \|_{L_{t,x}^{2}(I \times \mathbb{R}^{2})} \lesssim 2^{-\frac{|j - k|}{2}} v_{j} v_{k} (1 + s 2^{2k})^{-3} (1 + s' 2^{2j})^{-3}, \\
\| (P_{k} \psi_{x}(s))(P_{j} \psi_{x}(s')) \|_{L_{t,x}^{2}(I \times \mathbb{R}^{2})} \lesssim 2^{-\frac{|j - k|}{2}} 2^{-k \sigma} v_{j} v_{k}(\sigma) (1 + s 2^{2k})^{-3} (1 + s' 2^{2j})^{-3}
\endaligned
\end{equation}
where
\begin{equation}\label{8.10}
\aligned
\alpha_{k} = \sup_{j \in \mathbb{Z}} 2^{-\delta |k - j|} \| P_{j} \psi_{x} \|_{L_{t,x}^{4}(I \times \mathbb{R}^{2})}, \quad \beta_{k} = \sup_{j \in \mathbb{Z}} 2^{-\delta |k - j|} \| P_{j} \psi_{x}(0) \|_{L^{2}}, \\
 \alpha_{k}(\sigma) = \sup_{j \in \mathbb{Z}} 2^{-\delta |k - j|} 2^{j \sigma} \| P_{j} \psi_{x} \|_{L_{t,x}^{4}(I \times \mathbb{R}^{2})}, \quad \beta_{k}(\sigma) = \sup_{j \in \mathbb{Z}} 2^{-\delta |k - j|} 2^{j \sigma} \| P_{j} \psi_{x}(0) \|_{L^{2}}.
\endaligned
\end{equation}
We take
\begin{equation}
\label{eqn:vdef}
v_{k} := \alpha_{k} + \beta_{k},
\end{equation} 
which satisfies the bounds
\begin{equation}\label{8.11}
\sup_{k} v_{k} \lesssim \epsilon, \qquad \sum_{k} v_{k}^{2} \lesssim E^{4}.
\end{equation}
 We will similarly take $ v_{k} (\sigma) := \alpha_{k} (\sigma)+ \beta_{k} (\sigma)$.
\begin{remark}
Here we do not require that $\sum \alpha_{k}^{2} \lesssim \epsilon^{2}$.
\end{remark}

Applying the result of \cite{mcgahagan2007approximation} to the endpoint of $I = [-t_{1}, t_{2}]$, and using $(\ref{8.1})$--$(\ref{8.8})$, there exists some $\eta(\| u_{0} \|_{H^{3}}) > 0$ and a $C > 1$ such that $(\ref{8.9})$ holds on the interval $[-t_{1} - \eta, t_{2} + \eta]$, with
\begin{equation}\label{8.13}
\aligned
\| (P_{k} \psi_{x}(s))(P_{j} \psi_{x}(s')) \|_{L_{t,x}^{2}([-t_{1} - \eta, t_{2} + \eta] \times \mathbb{R}^{2})} \lesssim 2^{-\frac{|j - k|}{2}} C v_{j} v_{k} (1 + s 2^{2k})^{-3} (1 + s' 2^{2j})^{-3}, \\
\| (P_{k} \psi_{x}(s))(P_{j} \psi_{x}(s')) \|_{L_{t,x}^{2}([-t_{1} - \eta, t_{2} + \eta] \times \mathbb{R}^{2})} \lesssim 2^{-\frac{|j - k|}{2}} C v_{j} v_{k}(\sigma) 2^{-k \sigma} (1 + s 2^{2k})^{-3} (1 + s' 2^{2j})^{-3},
\endaligned
\end{equation}
\begin{equation}\label{8.13.0}
\sum_{j} v_{j}^{2} \lesssim C E, \qquad \sup_{j} v_{j} \lesssim C E^{4} \epsilon^{\ast},
\end{equation}
\begin{equation}\label{8.13.1}
\sup_{t \in [-t_{1} - \eta, t_{2} + \eta]} \| \psi_{x}(t) \|_{B_{\infty, 2}^{0}} \leq 2 \epsilon^{\ast},
\end{equation}
\begin{equation}\label{8.13.2}
\| \psi_{x}(t) \|_{L_{t}^{\infty} H^{3}([-t_{1} - \eta, t_{2} + \eta] \times \mathbb{R}^{2})} \leq 2 \| \psi_{x}(0) \|_{H^{3}}.
\end{equation}
Note, here the $\lesssim C (\cdot)$ means that the bound on the right hand side of our bootstrapping assumption is taken to be a large constant multiplied by the implicit constant we will get in the bilinear bound eventually.

Bootstrapping the bounds $(\ref{8.13})$--$(\ref{8.13.2})$ with the constant $C >1$, we can then prove
\begin{equation}\label{8.14}
\aligned
\| (P_{k} \psi_{x}(s))(P_{j} \psi_{x}(s')) \|_{L_{t,x}^{2}([-t_{1} - \eta, t_{2} + \eta] \times \mathbb{R}^{2})} \lesssim 2^{-\frac{|j - k|}{2}} v_{j} v_{k} (1 + s 2^{2k})^{-3} (1 + s' 2^{2j})^{-3}, \\
\| (P_{k} \psi_{x}(s))(P_{j} \psi_{x}(s')) \|_{L_{t,x}^{2}([-t_{1} - \eta, t_{2} + \eta] \times \mathbb{R}^{2})} \lesssim 2^{-\frac{|j - k|}{2}} v_{j} v_{k}(\sigma) 2^{-k \sigma} (1 + s 2^{2k})^{-3} (1 + s' 2^{2j})^{-3},
\endaligned
\end{equation}
\begin{equation}\label{8.14.0}
\sum_{j} v_{j}^{2} \lesssim E^{4}, \qquad \sup_{j} v_{j} \lesssim E^{4} \epsilon^{\ast},
\end{equation}
\begin{equation}\label{8.14.1}
\sup_{t \in [-t_{1} - \eta, t_{2} + \eta]} \| \psi_{x}(t) \|_{B_{\infty, 2}^{0}} \leq \frac{3}{2} \epsilon^{\ast},
\end{equation}
\begin{equation}\label{8.14.2}
\| \psi_{x}(t) \|_{L_{t}^{\infty} H^{3}([-t_{1} - \eta, t_{2} + \eta] \times \mathbb{R}^{2})} \leq \frac{3}{2} \| \psi_{x}(0) \|_{H^{3}}.
\end{equation}
By a standard continuity argument, this implies that $I$ can be extended to $\mathbb{R}$, giving scattering through our control of the $L^4$ norm using the $\alpha$ frequency envelope. 
The proof of $(\ref{8.14})$--$(\ref{8.14.2})$ uses the interaction Morawetz estimate to obtain bilinear Strichartz estimates for quasilinear Schr{\"o}dinger equations, see \cite{ifrim2023global}, \cite{ifrim2023long}, and \cite{ifrim2023global2}.  

We proceed as follows.  In Section \ref{sec:warmup}, we demonstrate how the interaction Morawetz estimate simplifies scattering results for the standard cubic NLS equation when working in Besov spaces.   We will demonstrate the argument to establish scattering in Section \ref{sec:wuscat} in the presence of a global $L^4$ norm, so in the remainder of the proof focusing on the Schr\"odinger map, we will focus more on getting the global $L^4$ bounds.  

Starting in Section \ref{sec:HMHF}, we introduce the appropriate Besov spaces in the geometric setting and use them to prove that the Harmonic Map Heat Flow is well defined in these spaces.  With the Harmonic Map Heat Flow established, in Section \ref{sec:Gauges}, we can define the caloric gauge and set up the problem in covariant coordinates.  We also demonstrate that smallness in the Besov norm translates to smallness in the initial data for the gauged equations.  

The crucial bootstrap bounds will be done by proving the following chain of Lemmas proved in Section \ref{sec:SchMap}.
\begin{lemma}[Change in mass]\label{lmass}
Under the bootstrap assumptions \eqref{8.13}-\eqref{8.13.2}, we have
\begin{equation}
\| P_{k} \psi_{x}(t) \|_{L^{2}}^{2} \leq \| P_{k} \psi_{x}(0) \|_{L^{2}}^{2} + c_{k}^{2} \epsilon^{2} \leq \beta_{k}^{2} + v_{k}^{2} \epsilon^{2}.
\end{equation}
This proves $(\ref{8.14.1})$.

Also, for any $\sigma \in \mathbb{N}$,
\begin{equation}
\| P_{k} \psi_{x}(t) \|_{L^{2}}^{2} \leq 2^{-2k \sigma} \beta_{k}(\sigma)^{2} + \epsilon^{2} 2^{-2k \sigma} v_{k}(\sigma)^{2}.
\end{equation}
This proves $(\ref{8.14.2})$.
\end{lemma}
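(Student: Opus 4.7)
The plan is to run a standard frequency-localized mass/energy identity on the gauged Schr\"odinger equation \eqref{1.17} and absorb every correction term into $c_{k}^{2}\epsilon^{2}$ using the bilinear bootstrap bounds \eqref{8.13} together with the caloric-gauge representation of the connection coefficients $A_{l}$ from Section \ref{sec:Gauges}. Concretely, I apply $P_{k}$ to \eqref{1.17}, pair with $\overline{P_{k}\psi_{m}}$, integrate in space, and take the imaginary part. The linear Schr\"odinger part contributes the clean telescoping derivative $\frac{d}{dt}\|P_{k}\psi_{m}\|_{L^{2}}^{2}$, and integrating from $0$ to $t$ yields
\begin{equation*}
\|P_{k}\psi_{x}(t)\|_{L^{2}}^{2}-\|P_{k}\psi_{x}(0)\|_{L^{2}}^{2}
= 2\,\mathrm{Im}\int_{0}^{t}\int_{\mathbb{R}^{2}} \overline{P_{k}\psi_{x}}\, P_{k}(\mathcal{N}[\psi,A])\, dx\, d\tau,
\end{equation*}
where $\mathcal{N}[\psi,A]$ denotes the right-hand side of \eqref{1.17}. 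The remainder of the proof is to show this correction is $\lesssim v_{k}^{2}\epsilon^{2}$ (and, with a factor $2^{-2k\sigma}$, $\lesssim v_{k}(\sigma)^{2}\epsilon^{2}$).

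I would organize the nonlinear terms into three groups and estimate each with a Littlewood--Paley paraproduct trichotomy. First, the genuinely cubic NLS-type term $-i\sum_{l}\mathrm{Im}(\bar\psi_{l}\psi_{m})\psi_{l}$ is handled directly: pairing with $\overline{P_{k}\psi_{x}}$ yields an $L^{2}_{t,x}\cdot L^{2}_{t,x}$ product of two frequency-localized pieces, each controlled by \eqref{8.13} at $s=s'=0$, summed against a slowly varying factor from the remaining pieces in $L^{\infty}_{t}L^{2}_{x}$ or $\ell^{2}L^{4}_{t,x}$; the $2^{-|j-k|/2}$ off-diagonal decay collapses the sum into $v_{k}^{2}$ times an $\epsilon$-small factor coming from \eqref{8.13.0}. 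Second, the magnetic term $-2iA_{l}\partial_{l}\psi_{m}$ and the potential term $(A_{d+1}+A_{l}^{2}-i\partial_{l}A_{l})\psi_{m}$ are rewritten using the caloric-gauge formula that expresses $A_{l}$ (and $A_{d+1}$) as $s$-integrals of bilinear expressions in $\psi_{x}(s)$. The weight $(1+s\,2^{2j})^{-3}(1+s'2^{2k})^{-3}$ in \eqref{8.13} makes the $s$-integrals absolutely convergent and, after integration, just yields another bilinear (or higher-order) expression in $\psi_{x}(0)$ controlled by the same frequency envelope $v_{k}$. Third, terms with $\partial_{l}A_{l}$ require one integration by parts in $x$ moving the derivative onto $\overline{P_{k}\psi_{m}}$ or onto the other $\psi$ factor, and then fall into the same framework.

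After these estimates, the aggregate correction is $\sum_{j}\sum_{k'} 2^{-|j-k|/2}2^{-|k'-k|/2} v_{j}v_{k'}v_{k}\epsilon \lesssim c_{k}^{2}\epsilon^{2}$ for the envelope $c_{k}=v_{k}$ (using $\sup v_{k}\lesssim \epsilon$ from \eqref{8.13.0} to absorb one power), establishing the first inequality. For the $\sigma$-weighted version, I repeat the same paraproduct analysis but insert the factor $2^{k\sigma}$ and, on each bilinear pair, place the $\sigma$-weight on the highest frequency factor, using the second line of \eqref{8.13} to control that factor by $v(\sigma)$ while the two remaining factors are controlled in $v$; the off-diagonal decay then sums to $2^{-2k\sigma}v_{k}(\sigma)^{2}\epsilon^{2}$.

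The main obstacle I expect is the bookkeeping of paraproducts involving $A_{l}$ and $\partial_{l}A_{l}$: one must verify that transferring derivatives and inserting the caloric-gauge $s$-integral representation yields precisely the weights $(1+s2^{2j})^{-3}(1+s'2^{2k})^{-3}$ needed for absolute convergence, and that high-high$\to$low interactions (where $k$ is much smaller than the inputs) close without losing the $2^{-|j-k|/2}$ gain. Provided the caloric-gauge bounds from Section \ref{sec:Gauges} supply the same frequency-envelope structure for $A_{l}$ that $\psi_{x}$ satisfies in \eqref{8.13}, each term can be reduced to a sum of $v_{j}v_{k'}v_{k}$-type products, completing the estimate and hence the proof of \eqref{8.14.1}--\eqref{8.14.2}.
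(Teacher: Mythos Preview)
Your proposal is correct and follows the same basic strategy as the paper: differentiate $\|P_{k}\psi_{x}\|_{L^{2}}^{2}$ in time via the gauged equation \eqref{1.17}, collect the nonlinear contributions, and bound each by $\lesssim \epsilon\, v_{k}^{2}$ (respectively $\epsilon\, 2^{-2k\sigma}v_{k}(\sigma)^{2}$) using paraproduct decompositions together with the bilinear bootstrap \eqref{8.13}. Your treatment of the cubic term $\psi_{l}\,\mathrm{Im}(\bar\psi_{l}\psi_{m})$ and of the potential terms $A_{t}\psi$, $A_{l}^{2}\psi$, $(\partial_{l}A_{l})\psi$ matches the paper's (equations (5.3)--(5.7.1)).

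The one place your route diverges is the quasilinear magnetic piece $A_{l}\partial_{l}\psi_{m}$ with $A$ at low frequency. You propose inserting the caloric $s$-integral for $A_{l}$ and closing via two applications of the $s$-weighted bilinear bound; this does work, and it is in fact how the paper handles the analogous term in the proof of Lemma~\ref{lL4} (see (6.32.1)). For the mass lemma, however, the paper takes a shortcut: since $A_{l}$ is real-valued, the low-$A$ contribution $\mathrm{Re}\langle (P_{\leq k-5}A_{l})\partial_{l}P_{k}\psi,\,P_{k}\psi\rangle$ is $\tfrac{1}{2}\int A_{l}\,\partial_{l}|P_{k}\psi|^{2}$, and a single integration by parts moves $\partial_{l}$ onto $A_{l}$ (where $\nabla A\in L^{2}_{t,x}$), bypassing the caloric expansion entirely (equation (5.9alt)). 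The residual commutator $[P_{k},P_{\leq k-5}A]$ from this reduction is then estimated using $\|[P_{k},P_{\leq k-5}A]\|_{L^{2}_{t,x}}\lesssim 2^{-k}\epsilon$ (equations (5.10)--(5.11)). Your approach and the paper's both close the estimate; the paper's is shorter for this term because it exploits the real-valuedness of $A_{l}$ rather than its bilinear-in-$\psi$ structure.
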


\begin{lemma}[Bilinear estimate]\label{lbilinear}
Under the bootstrap assumptions \eqref{8.13}-\eqref{8.13.2}, we have that for $k - j > 10$,
\begin{equation}\label{bilinear}
	\| (P_{k} \psi_{x}(s)) (P_{j} \psi_{x}(s')) \|_{L_{t,x}^{2}} \lesssim 2^{\frac{j - k}{2}} \{ \beta_{k}\beta_{j} + C \epsilon v_{j} v_{k} \}  (1 + s 2^{2k})^{-3} (1 + s' 2^{2j})^{-3}  ,
\end{equation}
and

\begin{equation}
\| (P_{k} \psi_{x}(s)) (P_{j} \psi_{x}(s')) \|_{L_{t,x}^{2}} \lesssim 2^{\frac{j - k}{2}} 2^{-k \sigma} \{ \beta_{k}(\sigma) \beta_{j} + C \epsilon v_{j} v_{k}(\sigma) \}   (1 + s 2^{2k})^{-3} (1 + s' 2^{2j})^{-3}.
\end{equation}
This proves $(\ref{8.14})$.
\end{lemma}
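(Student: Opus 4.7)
The plan is to run an interaction Morawetz argument on the paradifferential equation $(\ref{1.17})$ for $P_k \psi_x$ and $P_j \psi_x$ with $k-j > 10$. Writing $(\ref{1.17})$ schematically as $(i\partial_t + \Delta)\psi_m = \mathcal{N}(\psi, A)$, the linear part produces the bilinear Strichartz contribution $2^{(j-k)/2}\beta_j\beta_k$ via the standard interaction Morawetz mechanism (the gap $k-j>10$ supplies clean frequency separation), while $\mathcal{N}$ contributes the perturbative error $C\epsilon v_j v_k\, 2^{(j-k)/2}$. The heat-flow weights $(1+s2^{2k})^{-3}(1+s'2^{2j})^{-3}$ come from the $e^{s\Delta}$-smoothing built into the caloric gauge construction of Section \ref{sec:Gauges} applied to the frequency-localized pieces, exactly as in \cite{dodsonsmith}.

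The first step is to set up the interaction Morawetz functional from \cite{dodsonsmith}, which applies with essentially no change thanks to the gauge having been established for a general compact K\"ahler target in Section \ref{sec:Gauges}. Differentiating it in $t$ and substituting $(\ref{1.17})$ produces, on one side, the bilinear $L^2_{t,x}$ quantity appearing on the left-hand side of $(\ref{bilinear})$ with the correct weight $2^{(j-k)/2}$, and on the other side (i) boundary contributions at $t \in \partial I$ bounded by $\|P_k\psi_x(0)\|_{L^2}\|P_j\psi_x(0)\|_{L^2} \lesssim \beta_k\beta_j$, and (ii) error terms coming from $\mathcal{N}$. For the latter, the three pieces $-2iA_l\partial_l\psi_m$, $(A_{d+1} + A_l^2 - i\partial_l A_l)\psi_m$ and $-i\,\mathrm{Im}(\bar\psi_l\psi_m)\psi_l$ are each reduced, via the caloric-gauge identities that express $A_l$ and $A_{d+1}$ as heat-flow integrals of $\psi$-quadratic quantities, to products of at most three $\psi$-factors. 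I would pair two of the factors using the bootstrap bilinear bounds $(\ref{8.13})$ and control the third in $L^4_{t,x}$ via the envelope $\alpha_k$; summing dyadically with the $2^{-\delta|k-j|}$ off-diagonal gain yields the desired $C\epsilon v_j v_k\, 2^{(j-k)/2}$ error. The $\sigma$-weighted variant follows by inserting a factor of $2^{k\sigma}$ at the highest-frequency slot and replacing $v_k$ by $v_k(\sigma)$, with no structural changes to the argument.

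The principal obstacle is the magnetic term $-2iA_l\partial_l\psi_m$, which at face value loses a derivative on the high-frequency factor. I would handle it by a paradifferential splitting: when $A$ is low-frequency relative to $\psi_m$, the lost derivative is absorbed by the Morawetz bilinear gain $2^{-|j-k|/2}$ together with the standing $L^4$ smallness; when $A$ and $\psi_m$ live at comparable frequency, the caloric gauge structure (specifically the identity for $\partial_l A_l$ expressing it as a $\psi$-quadratic heat integral) effectively transfers the derivative onto $A$, which is then controlled by the heat-flow weight $(1+s2^{2k})^{-3}$; and high-high-to-low interactions are balanced in $L^2$ using the envelope $v_k$. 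The remaining scalar-potential terms $(A_{d+1}+A_l^2)\psi_m$ and the cubic resonance $\mathrm{Im}(\bar\psi_l\psi_m)\psi_l$ are cleaner since no derivative sits on the top-frequency factor, and they are estimated by the same pairing scheme.
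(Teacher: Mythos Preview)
Your overall strategy is sound and tracks the paper's, but there are two substantive points where your sketch diverges from what actually makes the argument close.

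First, the route to the heat-flow weights is different. You propose to run the Morawetz on $(\ref{1.17})$, which is the equation for $\psi_x$ at heat time $s=0$, and then recover the factors $(1+s2^{2k})^{-3}(1+s'2^{2j})^{-3}$ by appealing generically to $e^{s\Delta}$-smoothing. The paper instead runs the Morawetz directly at heat time $s$ and computes the effective nonlinearity there via the compatibility identity $D_t\psi_x = D_x\psi_t$ (equation $(\ref{7.4})$). This rewrites $(\partial_t - i\Delta)\psi_x(t,s)$ as $D_x\psi_t(s) - iA_t\psi_x(s) - i\Delta\psi_x(s)$, after which one splits $\psi_t(s) = e^{s\Delta}\psi_t|_{s=0} + (\psi_t(s)-e^{s\Delta}\psi_t|_{s=0})$ and likewise for $\psi_x$; the first piece reduces to the $s=0$ nonlinearity filtered through $e^{s\Delta}$ (producing the decay automatically), while the Duhamel remainder $F_2^{(2)}$ in $(\ref{7.27})$ is handled separately via the heat-flow estimates $(\ref{7.28})$--$(\ref{7.40})$. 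Your approach of proving the $s=0$ case first and then propagating is the \cite{dodsonsmith} route and does work, but you have not actually indicated the mechanism (Duhamel for the heat flow plus a secondary bootstrap as in $(\ref{7.42})$--$(\ref{7.43})$), and the one-line reference to Section~\ref{sec:Gauges} does not supply it.

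Second, your description of the low-frequency magnetic term is incorrect. You write that when $A$ is low-frequency relative to $\psi_m$, ``the lost derivative is absorbed by the Morawetz bilinear gain $2^{-|j-k|/2}$.'' It is not: the derivative on $P_k\psi_x$ costs $2^k$, which is exactly the size of $\sup_t |M(t)|$, so there is nothing to spare. The actual mechanism, carried out in $(\ref{4.90})$ and $(\ref{5.9})$--$(\ref{5.11})$, is that after passing to $(P_{\leq k-5}A)\nabla P_k\psi_x$ via the commutator $[P_k,P_{\leq k-5}A]$ (which gains $2^{-k}$), the two symmetric Morawetz terms combine and one integrates by parts in $x$ so that the derivative lands on $A$; the resulting $\nabla A$ is then placed in $L^2_{t,x}$ using $(\ref{bound:Ax})$. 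Your comparable-frequency case is closer to correct but misattributed: what controls $P_{\geq k-5}A$ is the bound $\|P_l A_x\|_{L^2_{t,x}} \lesssim 2^{-l}\sqrt{\epsilon}\,v_l$ coming from the caloric formula for $A_l$ itself, not the identity for $\partial_l A_l$.
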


\begin{lemma}\label{lL4}
Under the bootstrap assumptions \eqref{8.13}-\eqref{8.13.2}, we have
\begin{equation}
\| P_{k} \psi_{x}(t) \|_{L_{t,x}^{4}} \lesssim \beta_{k} \sum_{j} \beta_{j}^{2} + \epsilon v_{k} \sum_{j} v_{j}^{2}.
\end{equation}
Therefore,
\begin{equation}
\| P_{k} \psi_{x}(t) \|_{L_{t,x}^{4}} \lesssim \beta_k E + v_k (C \epsilon E) E^{4}
\end{equation}
and
\begin{equation}
\sum_{k} \| P_{k} \psi_{x}(t) \|_{L_{t,x}^{4}}^{2} \lesssim E^{2} + \epsilon C E^{6}.
\end{equation}
Also,
\begin{equation}
\| P_{k} \psi_{x}(t) \|_{L_{t,x}^{4}} \lesssim \beta_{k}(\sigma) E + \epsilon v_{k}(\sigma) E^{4}.
\end{equation}
This proves $(\ref{8.14.0})$.
\end{lemma}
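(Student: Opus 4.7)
The plan is to apply a Strichartz estimate to the frequency-localized Schr\"odinger equation for $P_k \psi_m$ and to control the resulting nonlinear contribution using the bilinear bound of Lemma~\ref{lbilinear}. Since the pair $(4,4)$ is Strichartz-admissible in two dimensions, one has
\begin{equation*}
\|P_k \psi_m\|_{L^4_{t,x}(I \times \mathbb{R}^2)} \lesssim \|P_k \psi_m(0)\|_{L^2} + \|P_k \mathcal{N}\|_{L^{4/3}_{t,x}},
\end{equation*}
where $\mathcal{N}$ denotes the right-hand side of \eqref{1.17}. The first term is bounded by $\beta_k$ by definition. In the caloric gauge constructed in Section~\ref{sec:Gauges}, the connection $A_l$ is recovered from $\psi$ by integrating the heat flow and is quadratic in $\psi$ in the natural Besov sense, so each term in $\mathcal{N}$ reduces to a trilinear expression in $\psi$, possibly integrated against the heat-time parameter $s$.

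For each trilinear term I would expand using a Bony paraproduct
\begin{equation*}
P_k(\psi \psi \psi) = \sum_{j_1, j_2, j_3} P_k\bigl(P_{j_1}\psi \, P_{j_2}\psi \, P_{j_3}\psi\bigr),
\end{equation*}
and apply H\"older's inequality to place one factor into $L^4_{t,x}$ and the remaining pair into $L^2_{t,x}$. The $L^2_{t,x}$-pair is controlled by Lemma~\ref{lbilinear} at $s = s' = 0$, yielding the gain $2^{-|j-l|/2}(\beta_j \beta_l + C \epsilon v_j v_l)$, while the $L^4_{t,x}$ factor is bounded by $\alpha \leq v$. For the $A_l$-type contributions, the extra heat-time integration is absorbed using the $(1 + s 2^{2k})^{-3}$ decay factors built into the bilinear bound, which after integration in $s$ produce acceptable powers of $2^{j}$ and do not spoil any of the frequency envelopes.

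Summing over admissible triples, the off-diagonal decay $2^{-|j-l|/2}$ together with the slowly varying envelope property encoded in the definitions \eqref{8.10} guarantees convergence. Separating those contributions involving only $\beta$-factors from those involving at least one $\epsilon v$-factor produces precisely the two terms in the stated estimate, and the linear $\beta_k$ coming from the first application of Strichartz is absorbed. The $\sigma$-weighted version follows verbatim from the $\sigma$-weighted bilinear bound of Lemma~\ref{lbilinear}.

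The main obstacle I expect is the high-high-to-low paraproduct regime, in which two input frequencies $j_1, j_2 \gg k$ cancel down to the output scale $k$. There the "pivot" envelope at scale $k$ is not directly present; one must convert $v_{j_1}$ (or $\beta_{j_1}$) to $v_k$ (or $\beta_k$) using the envelope inequality $v_{j_1} \leq 2^{\delta |j_1 - k|} v_k$, and then pay the resulting geometric factor against the $2^{-|j_1 - k|/2}$ gain coming from the bilinear estimate. This requires $\delta < 1/2$, and careful bookkeeping to ensure that exactly one factor $v_k$ or $\beta_k$ appears in front of each final summand, with the remaining two frequencies summing freely to produce $\sum_j \beta_j^2$ or $\sum_j v_j^2$.
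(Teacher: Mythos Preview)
Your overall strategy is close to the paper's, but there is a genuine gap in the treatment of the quasilinear term $A_l\,\partial_l\psi_m$, and it is exactly the term the paper singles out as the most difficult. You propose to bound $\|P_k\mathcal N\|_{L^{4/3}_{t,x}}$ directly. Consider the dominant regime $P_{\le k-10}A\cdot\nabla P_k\psi_x$, expand $A$ via the caloric formula $A\sim\int_0^\infty\psi_x(s)\,\partial_x\psi_x(s)\,ds$, and project to frequency $j\le k-10$. Any H\"older splitting into $L^4\times L^2$ at best uses one bilinear gain $2^{(j-k)/2}$ from Lemma~\ref{lbilinear}; the remaining derivative on $P_k\psi$ contributes $2^k$, the derivative inside $A$ contributes $2^j$, and the $s$-integration contributes $2^{-2j}$. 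The net factor is $2^{(k-j)/2}$, which diverges as $j\to-\infty$. No placement of the three factors into $L^4$ and $L^2$ avoids this; one bilinear gain is simply not enough to close the low-low-high interaction for a term with a genuine derivative loss.

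The paper's fix (Section~5.3) is to replace the $L^{4/3}_{t,x}$ dual norm by the $V^{4/3}_\Delta$ dual norm, i.e.\ to estimate $\langle v, P_k\mathcal N\rangle$ for $v\in V^{4/3}_\Delta\subset U^2_\Delta$ with $\tilde P_k v=v$. The point is that such a $v$ itself obeys bilinear Strichartz estimates against $\psi_x$: one gets $\|v\,(P_j\psi_x)\|_{L^2_{t,x}}\lesssim 2^{(j-k)/2}v_j$. This provides a \emph{second} bilinear gain, so that in the key estimate (the paper's (6.32.1))
\[
\langle v,\,(\nabla P_k\psi_x)\,P_{\le k-10}A_x\rangle
\lesssim\sum_{j\le k-10}2^{k-j}\,\|v\,(P_j\psi_x)\|_{L^2_{t,x}}\,\|(P_k\psi_x)(P_j\psi_x)\|_{L^2_{t,x}}
\]
the two $2^{(j-k)/2}$ factors exactly cancel the $2^{k-j}$ and the sum collapses to $v_k\sum_j v_j^2$ (with the refined splitting into $\beta$'s and $\epsilon v$'s coming from Lemma~\ref{lbilinear}). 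Your $L^{4/3}$ framework cannot access this second gain, because the dual of $L^4_{t,x}$ does not carry the $U^2$-type bilinear structure. The rest of your outline (paraproduct decomposition, envelope bookkeeping, the $\sigma$-weighted variant) is fine and matches the paper once this correction is made.
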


Finally, in Section \ref{sec:thm2pf}, we are able to apply the bilinear estimates in the gauged equations to finish the proof of Theorem $\ref{t1.1}$.  

\section{A warmup result}
\label{sec:warmup}

As a warmup, we will prove a global well-posedness and scattering result for the nonlinear Schr{\"o}dinger equation
\begin{equation}\label{2.1}
i u_{t} + \Delta u = F(u) = \mu |u|^{2} u, \qquad \mu = \pm 1, \qquad u(0,x) = u_{0},
\end{equation}
where $u_{0} \notin L^{2}(\mathbb{R}^{2})$, but $\| u_{0} \|_{B_{2, \infty}^{0}} \leq \epsilon_{0}$, for some $\epsilon_{0} > 0$ sufficiently small. The space $B_{p, q}^{s}$ is the Besov space defined by the norm
\begin{equation}\label{2.2}
\| f \|_{B_{p, q}^{s}(\mathbb{R}^{2})} = (\sum_{j = -\infty}^{\infty} 2^{jqs} \| P_{j} f \|_{L^{p}(\mathbb{R}^{2})}^{q})^{1/q}
\end{equation}
when $q < \infty$, and
\begin{equation}\label{2.2inf}
\| f \|_{B_{p, \infty}^{s}(\mathbb{R}^{2})} = \sup_{j \in \mathbb{Z}} 2^{js} \| P_{j} f \|_{L^{p}(\mathbb{R}^{2})}.
\end{equation}
Here, $P_{j}$ is the Littlewood--Paley projection operator
\begin{equation}\label{2.3}
P_{j} f = \mathcal F^{-1}(\psi(2^{-j} \xi) \hat{f}(\xi)),
\end{equation}
where
\begin{equation}\label{2.4}
\psi(\xi) = \phi(\frac{\xi}{2}) - \phi(\xi), \qquad \phi(\xi) \in C_{0}^{\infty}(\mathbb{R}^{2}), \qquad \phi(\xi) = 1, \qquad \text{for} \qquad |\xi| \leq 1.
\end{equation}

\begin{theorem}\label{t2.1}
There exists $\epsilon_{0} > 0$ such that if $\| u_{0} \|_{B_{2, \infty}^{0}} \leq \epsilon_{0}$, then $(\ref{2.1})$ is globally well-posed. Furthermore, there exist $u_{-}$, $u_{+} \in B_{2, \infty}^{0}$ such that if $u$ is the solution to $(\ref{2.1})$,
\begin{equation}\label{2.5}
e^{-it \Delta} u(t, \cdot) \rightharpoonup u_{+},
\end{equation}
as $t \nearrow \infty$, weakly in $B_{2, \infty}^{0}$, and
\begin{equation}\label{2.6}
e^{-it \Delta} u(t, \cdot) \rightharpoonup u_{-},
\end{equation}
as $t \searrow -\infty$, weakly in $B_{2, \infty}^{0}$.
\end{theorem}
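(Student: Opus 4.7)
The strategy parallels the outline for the Schr\"odinger map problem, stripped down to the scalar cubic model. Let $\alpha_k := \sup_{j\in\mathbb{Z}} 2^{-\delta|k-j|}\|P_j u_0\|_{L^2}$ be a Littlewood--Paley frequency envelope of the data, so that by hypothesis $\sup_k\alpha_k\lesssim\epsilon_0$, but \emph{no} $\ell^2$-summability in $k$ is available. On a time interval $I=[0,T]$ containing $0$ I would run a bootstrap with the three inductive bounds
\begin{equation*}
\sup_{t\in I}\|P_k u(t)\|_{L^2}\le 2\alpha_k,\quad \|P_j u\,P_k u\|_{L^2_{t,x}(I)}\le C\,2^{-|j-k|/2}\alpha_j\alpha_k,\quad \|P_k u\|_{L^4_{t,x}(I)}\le C\alpha_k,
\end{equation*}
for an absolute constant $C$. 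Smallness of $\epsilon_0$ should allow me to recover each bound with $C$ replaced by $C/2$, after which a standard continuity argument pushes $I$ out to all of $\mathbb{R}$.

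\textbf{Mass and bilinear bootstrap.} The $L^2$ energy identity applied to $P_k u$ yields $\|P_k u(t)\|_{L^2}^2-\|P_k u(0)\|_{L^2}^2 = 2\,\mathrm{Im}\int_0^t \langle P_k u, P_k(|u|^2 u)\rangle\,ds$. I decompose $|u|^2 u$ in Littlewood--Paley pieces and pair the $P_k u$ factor with one low-frequency factor in the bilinear norm, leaving a third factor in $L^\infty_{t,x}$ controlled by Bernstein and the uniform-in-$k$ mass bound. The off-diagonal decay $2^{-|j-k|/2}$ acts as a Schur weight, producing an $O(\epsilon_0^2\alpha_k^2)$ correction that closes the mass bound. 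The bilinear bootstrap itself is obtained by writing the Duhamel formula for each frequency piece and invoking the $2$D linear bilinear Strichartz inequality for $e^{it\Delta}$, with the nonlinear correction again absorbed into a factor of $\epsilon_0^2$ thanks to smallness of the interacting mass.

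\textbf{The $L^4$ bound via interaction Morawetz.} The essential input that replaces the unavailable global mass bound is an interaction Morawetz estimate applied frequency by frequency, in the spirit of Planchon--Vega / Colliander--Grillakis--Tzirakis. For each $k$, this produces a monotonicity identity whose right-hand side involves only the frequency-localized mass $\|P_k u\|_{L^\infty L^2}\le 2\alpha_k$ plus cross-frequency interactions, the latter controllable by the bootstrap bilinear estimate. This step is the main obstacle: one must adapt the interaction Morawetz to a Besov setting where there is no global $L^2$ to sum against, so the identity has to be localized in frequency and the error terms from commuting $P_k$ with the nonlinearity handled using the bilinear estimates rather than mass conservation.

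\textbf{Scattering.} Once the global bootstrap is in place, Duhamel gives $e^{-it\Delta}u(t)=u_0-i\int_0^t e^{-is\Delta}(|u|^2 u)(s)\,ds$. To show weak convergence in $B^0_{2,\infty}=(B^0_{2,1})^*$ as $t\to+\infty$, I test against $\varphi$ in a dense subset of $B^0_{2,1}$ and estimate
\begin{equation*}
\Bigl|\int_{t_1}^{t_2}\bigl\langle (|u|^2 u)(s),\,e^{is\Delta}\varphi\bigr\rangle\,ds\Bigr|
\end{equation*}
by dyadic decomposition, H\"older, and the bilinear and $L^4$ bounds established above, obtaining a quantity that vanishes as $t_1,t_2\to\infty$. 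This yields a Cauchy sequence of numerical pairings, hence defines $u_+\in B^0_{2,\infty}$ via Banach--Alaoglu together with the uniform $B^0_{2,\infty}$ bound from the bootstrap. The case $t\to-\infty$ is identical by time reversal.
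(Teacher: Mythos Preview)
Your overall bootstrap structure (mass, bilinear, $L^4$ envelopes) matches the paper's, but you have inverted the roles of the two main tools. The paper proves the \emph{bilinear} estimate $\|(P_j u)(P_k u)\|_{L^2_{t,x}}\lesssim 2^{-|j-k|/2}\alpha_j\alpha_k$ directly via an interaction Morawetz identity: one writes a two-frequency Morawetz potential $M(t)=\int |P_j u(y)|^2\frac{(x-y)_1}{|(x-y)_1|}\operatorname{Im}[\overline{P_k u}\,\partial_1 P_k u](x)\,dx\,dy+(\text{sym.})$ and computes $\frac{d}{dt}M(t)$; the forcing terms from $F=|u|^2u$ enter only through integrals of the type $\int\!\!\int (P_j u)(P_j F)\cdots$ and $\int\!\!\int |P_j u|^2\cdots(P_k u)(P_k F)$, which are then bounded using the bootstrap bilinear and $L^4$ norms. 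The $L^4$ bound, by contrast, is obtained in the paper not from interaction Morawetz but from the $U^4_\Delta$/$V^{4/3}_\Delta$ duality $\|P_k u\|_{U^4_\Delta}\lesssim \|P_k u_0\|_{L^2}+\sup_{\|v\|_{V^{4/3}_\Delta}=1}\int \langle P_k(|u|^2u),v\rangle$, where $v\in V^{4/3}_\Delta\subset U^2_\Delta$ allows one to reuse the bilinear estimate with $v$ playing the role of one $P_k u$. Your Duhamel-plus-linear-bilinear-Strichartz route to the bilinear bound and Planchon--Vega route to $L^4$ would also close for cubic NLS, but the paper's allocation is the one that survives in the Schr\"odinger map setting: there the quasilinear term $A\cdot\nabla\psi$ is not perturbative in a Duhamel/$U^2$ framework, whereas the interaction-Morawetz computation of $\frac{d}{dt}M(t)$ handles it after an integration by parts and a commutator estimate. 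Since this section is explicitly a warmup for that argument, the choice of tool matters.

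Two smaller points. First, in your mass step, placing ``a third factor in $L^\infty_{t,x}$ controlled by Bernstein and the uniform-in-$k$ mass bound'' does not sum: $\|P_j u\|_{L^\infty_{t,x}}\lesssim 2^j\alpha_j$ carries no decay in $j$. The working estimate (the paper's (2.23)--(2.25)) puts one bilinear $L^2_{t,x}$ pair against two $L^4_{t,x}$ factors; the off-diagonal $2^{-|j-k|/2}$ from the bilinear bound then makes all sums geometric. Second, since $u_0\notin L^2$, you need a regularization to even start the bootstrap: the paper truncates to $u_{0,N}=\sum_{j\le N}P_j u_0\in \dot H^1$, invokes local well-posedness on a small interval $[-T_N,T_N]$, shows the bootstrap bounds hold there, iterates to all of $\mathbb{R}$ with constants independent of $N$, and only then passes to the limit; for scattering the paper observes that each $e^{-it\Delta}P_k u(t)$ is Cauchy in $L^2$ (strongly, frequency by frequency), which immediately gives the weak-$*$ limit in $B^0_{2,\infty}$.
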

 Theorem $\ref{t2.1}$ is a small data result, and does not depend on the sign of the nonlinearity. We do not require that $u_{0} \in L^{2}$. It is known from \cite{dodson2016global} that Theorem $\ref{t2.1}$ holds for $u_{0} \in L^{2}$ and $\mu = +1$, and from \cite{dodson2015global} that Theorem $\ref{t2.1}$ holds for $\| u_{0} \|_{L^{2}} < \| Q \|_{L^{2}}$ and $\mu = -1$.
 \begin{proof}
 Theorem $\ref{t2.1}$ follows from Duhamel's principle and the following two estimates:
 \begin{equation}\label{2.7}
 \sup_{j, k \in \mathbb{Z}} 2^{\frac{|j - k|}{2}} \| (P_{j} u)(P_{k} u) \|_{L_{t,x}^{2}(\mathbb{R} \times \mathbb{R}^{2})} \lesssim \epsilon_{0}^{2}
 \end{equation}
 and
 \begin{equation}\label{2.8}
 \sup_{j, k \in \mathbb{Z}, \| u_{0} \|_{L^{2}} = 1} \| (P_{j} u)(P_{k} e^{it \Delta} u_{0}) \|_{L_{t,x}^{2}(\mathbb{R} \times \mathbb{R}^{2})} \lesssim \epsilon_{0}.
 \end{equation}
 
 When $|j - k| \leq 10$, $(\ref{2.7})$ and $(\ref{2.8})$ will follow from ordinary Strichartz estimates. When $|j - k| > 10$, the proof of $(\ref{2.7})$ and $(\ref{2.8})$ will utilize bilinear Strichartz estimates. Such estimates were proved in \cite{bourgain1998refinements} using Fourier analysis, and they are of the form
 \begin{equation}\label{2.9}
 \| (e^{it \Delta} P_{j} u_{0})(e^{it \Delta} P_{k} v_{0}) \|_{L_{t,x}^{2}(\mathbb{R} \times \mathbb{R}^{2})} \lesssim 2^{\frac{j - k}{2}} \| P_{j} u_{0} \|_{L^{2}} \| P_{k} v_{0} \|_{L^{2}}.
 \end{equation}
It is useful to prove $(\ref{2.7})$ using the interaction Morawetz estimate.
 
\begin{theorem}\label{t3.1}
If $j \leq k - 10$, then
\begin{equation}\label{2.11}
\| (e^{it \Delta} P_{j} u_{0})(e^{it \Delta} P_{k} v_{0}) \|_{L_{t,x}^{2}(\mathbb{R} \times \mathbb{R}^{2})} \lesssim 2^{\frac{j - k}{2}} \| P_{j} u_{0} \|_{L^{2}} \| P_{k} v_{0} \|_{L^{2}}.
\end{equation}
\end{theorem}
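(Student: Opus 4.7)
My plan is to prove Theorem~\ref{t3.1} by the interaction Morawetz route of Planchon--Vega, in keeping with the Morawetz-based strategy that the authors announce for the rest of the paper. First I would set $u(t,x):=e^{it\Delta}P_j u_0$ and $v(t,x):=e^{it\Delta}P_k v_0$; both solve the free Schr\"odinger equation on $\mathbb{R}\times\mathbb{R}^2$ with mass conserved in time. By Galilean invariance I may recenter $P_j u_0$ in Fourier space so that the supports of $\widehat{P_ju_0}$ and $\widehat{P_kv_0}$ are separated by a frequency gap $\sim 2^k$; the quantity $\|uv\|_{L^2_{t,x}}$ is unchanged by this reduction. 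By scaling I may also normalize $\|P_ju_0\|_{L^2}=\|P_kv_0\|_{L^2}=1$.

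Next I would compute $dM/dt$ for the asymmetric two-particle Morawetz functional
\[
M(t) \;=\; 2\int_{\mathbb{R}^2}\!\int_{\mathbb{R}^2} \frac{x-y}{|x-y|}\cdot \mathrm{Im}\bigl[\bar v(t,y)\,\nabla v(t,y)\bigr]\, |u(t,x)|^2\, dx\, dy,
\]
which measures the momentum density of $v$ weighted against the mass density of $u$. Using that $u$ and $v$ both solve the free Schr\"odinger equation, after integrations by parts in $y$ the derivative yields a positive bulk term. Combining this coercive term with a frequency-localized bilinear Bernstein estimate, and using that the frequency gap between $u$ and $v$ is of order $2^k$ while the relevant tangential scale of interaction is of order $2^j$, the bulk term is bounded below by $c\,2^{k-j}\|uv(t)\|_{L^2_x}^{\,2}$. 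Integrating in $t$ over $\mathbb{R}$ and using that $|M(t)|\lesssim \|u_0\|_{L^2}^2\|v_0\|_{L^2}\|\nabla v_0\|_{L^2}\lesssim 2^k\|u_0\|_{L^2}^2\|v_0\|_{L^2}^2$ by mass and momentum conservation, one arrives at
\[
2^{k-j}\,\|uv\|_{L^2_{t,x}}^{\,2} \;\lesssim\; 2^k\,\|u_0\|_{L^2}^2\,\|v_0\|_{L^2}^2,
\]
which rearranges to the sharp $2^{(j-k)/2}$ bound.

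The main obstacle is to extract rigorously the factor $2^{k-j}$ on the coercive side of the Morawetz identity: naively the relative momentum alone produces only $2^k$, and recovering the extra $2^{-j}$ requires carefully tracking the frequency support of the low-frequency factor $u$ through the bilinear interaction term. A cleaner alternative is Bourgain's direct Fourier-space proof: by Plancherel and the identity
\[
\mathcal F_{t,x}[uv](\tau,\zeta) \;=\; c\int_{\mathbb{R}^2}\widehat{P_ju_0}(\xi)\,\widehat{P_kv_0}(\zeta-\xi)\,\delta(\tau+|\xi|^2+|\zeta-\xi|^2)\,d\xi,
\]
the estimate reduces via Cauchy--Schwarz to two geometric facts about the level circle $\{\xi:\tau+|\xi|^2+|\zeta-\xi|^2=0\}$: the Jacobian $|\nabla_\xi(|\xi|^2+|\zeta-\xi|^2)|=4|\xi-\zeta/2|$ is of size $\sim 2^k$ on the support because $|\zeta-\xi|\sim 2^k\gg |\xi|\sim 2^j$, and the arclength of the level circle intersected with $\{|\xi|\sim 2^j\}$ is $O(2^j)$. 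Combining these two facts and integrating in $(\tau,\zeta)$ produces exactly the ratio $2^{j-k}$ on the square of the bilinear norm, giving the stated bound and bypassing the technical difficulty of the Morawetz approach.
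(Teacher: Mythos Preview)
Your Bourgain Fourier-space argument is correct and complete, but it is not the route the paper takes, and your Morawetz sketch, while in the right spirit, differs from the paper's in several substantive ways.

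The paper's Morawetz proof does \emph{not} use the radial weight $\frac{x-y}{|x-y|}$ or an asymmetric functional. Instead it uses the one-dimensional weight $\mathrm{sgn}((x-y)_1)$ and the \emph{symmetric} Planchon--Vega functional (both the $|u|^2$-weighted momentum of $v$ and the $|v|^2$-weighted momentum of $u$). With this choice, the time derivative collapses exactly to the off-diagonal positive term
\[
\int \bigl|\partial_1\bigl(u(t,x_1,x_2)\,\overline{v(t,x_1,y_2)}\bigr)\bigr|^2\,dx_1\,dx_2\,dy_2,
\]
which is where the symmetry is essential; an asymmetric functional would leave uncanceled cross terms. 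The paper then recovers the missing $2^j$ not from the Morawetz identity itself but from a separate kernel trick: since $u=P_j u$, one has $u(t,x_1,y_2)=\int 2^j \psi(2^j(x_2-y_2))\,u(t,x_1,x_2)\,dx_2$, so collapsing $x_2\to y_2$ costs exactly a factor $2^j$. Combining this with $|M(t)|\lesssim 2^k$ and Bernstein on the product (supported at frequency $\sim 2^k$) gives $2^{j-k}$.

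The obstacle you flagged, extracting $2^{-j}$ from the coercive side, is thus resolved in the paper not by frequency bookkeeping inside the Morawetz computation but by this post-hoc trace argument. Your Galilean recentering is unnecessary for the paper's version and would in fact be counterproductive for the paper's purposes: the whole point of the Morawetz proof here is that it is robust under nonlinear forcing $F_1,F_2$ (cf.\ the paper's equation~\eqref{2.22}), and the Galilean shift would scramble the forcing terms. The Bourgain proof, while cleaner for the free equation, does not generalize as directly to this setting.
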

\begin{proof}
Let $v = e^{it \Delta} P_{k} v_{0}$ and $u = e^{it \Delta} P_{j} u_{0}$. Then define the interaction Morawetz potential,
\begin{equation}\label{2.12}
M(t) = \int |u(t, y)|^{2} \frac{(x - y)_{1}}{|(x - y)_{1}|} Im[\bar{v} \partial_{1} v](t, x) dx dy + \int |v(t, y)|^{2} \frac{(x - y)_{1}}{|(x - y)_{1}|} Im[\bar{u} \partial_{1} u](t, x) dx dy.
\end{equation}
Since $e^{it \Delta}$ is a unitary operator on $L^{2}$-based Sobolev spaces and $j \leq k$,
\begin{equation}\label{2.13}
\sup_{t \in \mathbb{R}} |M(t)| \lesssim 2^{k} \| P_{j} u_{0} \|_{L^{2}}^{2} \| P_{k} v_{0} \|_{L^{2}}^{2}.
\end{equation}
Then by the product rule,
\begin{equation}\label{2.14}
\aligned
\frac{d}{dt} M(t) & = -2 \int \nabla \cdot Im[\bar{u} \nabla u] \frac{(x - y)_{1}}{|(x - y)_{1}|} Im[\bar{v} \partial_{1} v](t, x) dx dy \\
& -2 \int \nabla \cdot Im[\bar{v} \nabla v] \frac{(x - y)_{1}}{|(x - y)_{1}|} Im[\bar{u} \partial_{1} u](t, x) dx dy \\
& + \int |u(t, y)|^{2} \frac{(x - y)_{1}}{|(x - y)_{1}|} Re[\bar{v} \partial_{1} \Delta v](t, x) dx dy - \int |u(t, y)|^{2} \frac{(x - y)_{1}}{|(x - y)_{1}|} Re[\Delta \bar{v} \partial_{1}  v](t, x) dx dy \\
& + \int |v(t, y)|^{2} \frac{(x - y)_{1}}{|(x - y)_{1}|} Re[\bar{u} \partial_{1} \Delta u](t, x) dx dy - \int |v(t, y)|^{2} \frac{(x - y)_{1}}{|(x - y)_{1}|} Re[\Delta \bar{u} \partial_{1}  u](t, x) dx dy.
\endaligned
\end{equation}
Integrating by parts,
\begin{equation}\label{2.15}
\frac{d}{dt} M(t) = \int |\partial_{1}(u(t, x_{1}, x_{2}) \overline{v(t, x_{1}, y_{2})})|^{2} dx_{1} dx_{2} dy_{2},
\end{equation}
so by the fundamental theorem of calculus,
\begin{equation}\label{2.16}
\int \int |\partial_{1}(u(t, x_{1}, x_{2}) \overline{v(t, x_{1}, y_{2})})|^{2} dx_{1} dx_{2} dy_{2} dt \lesssim 2^{k} \| P_{j} u_{0} \|_{L^{2}}^{2} \| P_{k} v_{0} \|_{L^{2}}^{2}.
\end{equation}
If $\psi$ is the Littlewood--Paley kernel,
\begin{equation}\label{2.17}
\aligned
& \int \int |\partial_{1}(u(t, x_{1}, y_{2}) \overline{v(t, x_{1}, y_{2})})|^{2} dx_{1} dy_{2} dt \\
& \hspace{1cm} \lesssim 2^{j} \int \int |\psi(2^{j}(x_{2} - y_{2}))| |\partial_{1}(u(t, x_{1}, x_{2}) \overline{v(t, x_{1}, y_{2})})|^{2} dx_{1} dx_{2} dy_{2} dt \\
& \lesssim 2^{j} \int \int |\partial_{1}(u(t, x_{1}, x_{2}) \overline{v(t, x_{1}, y_{2})})|^{2} dx_{1} dx_{2} dy_{2} dt \lesssim 2^{j + k} \| P_{j} u_{0} \|_{L^{2}}^{2} \| P_{k} v_{0} \|_{L^{2}}^{2}.
\endaligned
\end{equation}
The same argument also implies that $(\ref{2.17})$ also holds for $\partial_{1}$ replaced by $\partial_{2}$, and therefore,
\begin{equation}\label{2.18}
\int \int |\nabla (u(t, x_{1}, y_{2}) \overline{v(t, x_{1}, y_{2})})|^{2} dx_{1} dy_{2} dt \lesssim 2^{j + k} \| P_{j} u_{0} \|_{L^{2}}^{2} \| P_{k} v_{0} \|_{L^{2}}^{2}.
\end{equation}
Then by Bernstein's inequality,
\begin{equation}\label{2.19}
\| v \bar{u} \|_{L_{t,x}^{2}(\mathbb{R} \times \mathbb{R}^{2})}^{2} = \| vu \|_{L_{t,x}^{2}(\mathbb{R} \times \mathbb{R}^{2})}^{2} \lesssim 2^{j - k} \| P_{j} u_{0} \|_{L^{2}}^{2} \| P_{k} v_{0} \|_{L^{2}}^{2}.
\end{equation}
\end{proof}

The main advantage in proving the bilinear estimates in this way is that the computations are quite robust under a forcing term. Indeed, suppose $u$ solves
\begin{equation}\label{2.20}
i u_{t} + \Delta u = F_{1}, \qquad u(0,x) = u_{0},
\end{equation}
and that $v$ solves
\begin{equation}\label{2.21}
i v_{t} + \Delta v = F_{2}, \qquad v(0,x) = v_{0}.
\end{equation}
Then following the computations in the proof of Theorem $\ref{t3.1}$,
\begin{equation}\label{2.22}
\aligned
\| (P_{j} u)(P_{k} v) \|_{L_{t,x}^{2}([0, T] \times \mathbb{R}^{2})}^{2} \lesssim 2^{j - k} \sup_{t \in [0, T]} \| P_{j} u(t) \|_{L^{2}}^{2} \| P_{k} v(t) \|_{L^{2}}^{2} \\
+ 2^{j - 2k + 1} \int_{0}^{T} \int Im[\overline{P_{j} u} P_{j} F_{1}](t, y) \frac{x - y}{|x - y|} \cdot Im[\overline{P_{k} v} \nabla P_{k} v](t, x) dx dy dt \\
+ 2^{j - 2k + 1} \int_{0}^{T} \int Im[\overline{P_{k} v} P_{k} F_{2}](t, y) \frac{x - y}{|x - y|} \cdot Im[\overline{P_{j} u} \nabla P_{j} u](t, x) dx dy dt \\
- 2^{j - 2k} \int_{0}^{T} \int |P_{j} u(t,y)|^{2} \frac{x - y}{|x - y|} \cdot Re[\overline{P_{k} v} \nabla P_{k} F_{2}](t,x) dx dy dt \\
+ 2^{j - 2k} \int_{0}^{T} \int |P_{j} u(t,y)|^{2} \frac{x - y}{|x - y|} \cdot Re[\overline{P_{k} F_{2}} \nabla P_{k} v](t,x) dx dy dt \\
- 2^{j - 2k} \int_{0}^{T} \int |P_{k} v(t,y)|^{2} \frac{x - y}{|x - y|} \cdot Re[\overline{P_{j} u} \nabla P_{j} F_{1}](t,x) dx dy dt \\
+ 2^{j - 2k} \int_{0}^{T} \int |P_{k} v(t,y)|^{2} \frac{x - y}{|x - y|} \cdot Re[\overline{P_{j} F_{1}} \nabla P_{j} u](t,x) dx dy dt.
\endaligned
\end{equation}
Taking $F_{1} = |u|^{2} u$, $F_{2} = |v|^{2} v$, if $(\ref{2.7})$ and $(\ref{2.8})$ hold for $u$ and $v$,
\begin{equation}\label{2.23}
 \aligned
 & \| (P_{j} u)(P_{j - 5 \leq \cdot \leq j + 5} u)(P_{\leq j - 5} u)^{2} \|_{L_{t, x}^{1}(\mathbb{R} \times \mathbb{R}^{2})} \\
 & \lesssim \sum_{j_{1} \leq j_{2} \leq j - 5} \| (P_{j_{1}} u)(P_{j - 5 \leq \cdot \leq j + 5} u) \|_{L_{t,x}^{2}(\mathbb{R} \times \mathbb{R}^{2})} \| P_{j_{2}} u \|_{L_{t, x}^{4}(\mathbb{R} \times \mathbb{R}^{2})} \| P_{j} u \|_{L_{t,x}^{4}(\mathbb{R} \times \mathbb{R}^{2})}  \\
 & \lesssim \epsilon_{0}^{4}. 
 \endaligned
 \end{equation}
 Next, if $(\ref{2.7})$ and $(\ref{2.8})$ hold for $u$ and $v$,
  \begin{equation}\label{2.24}
 \aligned
 & \| (P_{j} u)(P_{\leq j - 5} u)(P_{\geq j - 5} u)^{2} \|_{L_{t, x}^{1}(\mathbb{R} \times \mathbb{R}^{2})} \\
 & \lesssim \sum_{j_{1} \leq j - 5} \sum_{j - 5 \leq j_{2} \leq j_{3}} \| P_{j} u \|_{L_{t,x}^{4}(\mathbb{R} \times \mathbb{R}^{2})} \| (P_{j_{1}} u)(P_{j_{3}} u) \|_{L_{t,x}^{2}(\mathbb{R} \times \mathbb{R}^{2})} \| P_{j_{2}} u \|_{L_{t,x}^{4}}  \lesssim \epsilon_{0}^{4}.
 \endaligned
 \end{equation}
Finally, if $(\ref{2.7})$ and $(\ref{2.8})$ hold for $u$ and $v$,
 \begin{equation}\label{2.25}
 \aligned
& \| (P_{j} u) P_{j}((P_{\geq j - 5} u)^{3}) \|_{L_{t,x}^{1}(\mathbb{R} \times \mathbb{R}^{2})} \\
& \lesssim \sum_{j - 5 \leq j_{1} \leq j_{2} \leq j_{3}} \| (P_{j} u)(P_{j_{3}} u) \|_{L_{t,x}^{2}} \| P_{j_{2}} u \|_{L_{t,x}^{4}} \| P_{j_{1}} u \|_{L_{t,x}^{4}} \lesssim \epsilon_{0}^{4}.
 \endaligned
 \end{equation}
 Similar estimates hold when $u$ and $F_{1}$ are replaced by $v$ and $F_{2}$. Plugging $(\ref{2.23})$--$(\ref{2.25})$ into $(\ref{2.22})$, for $k - j > 10$,
 \begin{equation}\label{2.26}
 \| (P_{j} u)(P_{k} v) \|_{L_{t,x}^{2}}^{2} \lesssim 2^{j - k} \sup_{t} \| P_{j} u(t) \|_{L^{2}}^{2} \| P_{k} v(t) \|_{L^{2}}^{2} + 2^{j - k} (\epsilon_{0}^{4} \sup_{t} \| P_{j} u(t) \|_{L^{2}}^{2} + \epsilon_{0}^{4} \sup_{t} \| P_{k} v(t) \|_{L^{2}}^{2}).
 \end{equation}
 Now then,
 \begin{equation}\label{2.27}
 \frac{d}{dt} \| P_{j} u \|_{L^{2}}^{2} \lesssim \| (P_{j} u) P_{j} F_{1} \|_{L^{1}}, \qquad \frac{d}{dt} \| P_{k} v \|_{L^{2}}^{2} \lesssim \| (P_{k} v) P_{k} F_{2} \|_{L^{2}},
 \end{equation}
so plugging $(\ref{2.23})$--$(\ref{2.25})$ into $(\ref{2.27})$ implies that when $k - j > 10$,
\begin{equation}\label{2.28}
\| (P_{j} u)(P_{k} v) \|_{L_{t,x}^{2}(\mathbb{R} \times \mathbb{R}^{2})}^{2} \lesssim 2^{j - k}\| P_{j} u_{0} \|_{L^{2}}^{2} \| P_{k} v_{0} \|_{L^{2}}^{2} + 2^{j-k} \epsilon_{0}^{6}.
\end{equation}
Taking $u = v$ gives $(\ref{2.7})$ when $|j - k| > 10$ and taking $v = e^{it \Delta} u_{0}$ gives $(\ref{2.8})$ when $|j - k| > 10$.\medskip

When $|j - k| \leq 10$, use the Strichartz estimates in \cite{tao2006nonlinear}, \cite{keel1998endpoint}, and \cite{hadac2009well}. Indeed, using the $U^{p}$ and $V^{p}$ spaces, see for example \cite{hadac2009well},
 \begin{equation}\label{2.29}
 \| P_{j} u \|_{U_{\Delta}^{4}(\mathbb{R} \times \mathbb{R}^{2})} \lesssim \| P_{j} u_{0} \|_{L^{2}} + \sup_{\| v \|_{V_{\Delta}^{4/3}} = 1} \int_{\mathbb{R}} (P_{j}(|u|^{2} u), v)_{L^{2}} dt,
 \end{equation}
 where $(\cdot, \cdot)_{L^{2}}$ is the $L^{2}$ inner product,
 \begin{equation}\label{2.30}
 (f, g)_{L^{2}} = \int f(x) \overline{g(x)} dx.
 \end{equation}
Since $V_{\Delta}^{4/3} \subset U_{\Delta}^{2}$, applying $(\ref{2.23})$ with $P_{j} u$ replaced by $P_{j} v$,
\begin{equation}\label{2.31}
\int (P_{j}(|u|^{2} u), P_{j} v)_{L^{2}} dt \lesssim \epsilon_{0}^{3}.
\end{equation}
Plugging $(\ref{2.31})$ into $(\ref{2.29})$,
\begin{equation}\label{2.32}
\| P_{j} u \|_{L_{t,x}^{4}} \lesssim \| P_{j} u_{0} \|_{L^{2}} + \epsilon_{0}^{3}.
\end{equation}
Therefore, making a standard bootstrap argument, $(\ref{2.7})$ and $(\ref{2.8})$ hold.

 \subsection{Scattering}
 \label{sec:wuscat}
 
 To make the proof of Theorem $\ref{t2.1}$ completely rigorous, fix $N \in \mathbb{Z}$ and set
 \begin{equation}\label{2.33}
 u_{0, N} = \sum_{j \leq N} P_{j} u_{0}.
 \end{equation}
 Then since $u_{0, N} \in \dot{H}^{1}(\mathbb{R}^{2})$, we know from the arguments in \cite{cazenave1988cauchy} that $(\ref{2.1})$ is locally well-posed. One could furthermore choose an interval $[-T_{N}, T_{N}]$, where $T_{N}$ will likely go to zero as $N \nearrow \infty$, where $\| u \|_{L_{t,x}^{4}([-T_{N}, T_{N}] \times \mathbb{R}^{2})} \leq \epsilon_{0}$. Plugging this fact into $(\ref{2.23})$, $(\ref{2.24})$, and $(\ref{2.26})$ would therefore imply $(\ref{2.22})$ on the interval $[-T_{N}, T_{N}]$. Furthermore, we would continue to have $\| u(T_{N}) \|_{B_{2, \infty}^{0}}$ and $\| u(-T_{N}) \|_{B_{2, \infty}^{0}} \lesssim \epsilon_{0}$ and $\| u(\pm T_{N}) \|_{\dot{H}^{1}} \lesssim \| u_{0, N} \|_{\dot{H}^{1}}$, so the same computations could be extended to the intervals $[T_{N}, 2T_{N}]$ and $[-2 T_{N}, -T_{N}]$. Furthermore, $(\ref{2.23})$, $(\ref{2.24})$, $(\ref{2.26})$ would imply $(\ref{2.28})$ on the interval $[-2 T_{N}, 2 T_{N}]$. Iterating, the estimate $(\ref{2.28})$ would be obtained on $\mathbb{R}$. 
 
 Furthermore, the bounds in $(\ref{2.28})$ would be independent of $N$. The computations in $(\ref{2.23})$, $(\ref{2.24})$, and $(\ref{2.26})$, we can show that $P_{j} u_{N}(t)$ converges in $L_{t,x}^{4}(\mathbb{R} \times \mathbb{R}^{2})$, where $u_{N}(t)$ is the solution to $(\ref{2.1})$ with initial data $u_{0, N}$.
 
 The bounds in $(\ref{2.22})$ imply
 \begin{equation}\label{2.35}
 e^{-it \Delta} P_{k} u(t, \cdot)
 \end{equation}
 converge in $L^{2}(\mathbb{R}^{2})$ as $t \rightarrow \pm \infty$, which implies $(\ref{2.6})$ and $(\ref{2.7})$ using standard methods (see for instance Corollary 2.3.5 in \cite{dodson2019defocusing}).
 \end{proof}
 
 \begin{remark}
 The weak convergence in $(\ref{2.5})$ and $(\ref{2.6})$ could not be upgraded to strong convergence in $B_{2, \infty}^{0}$. To see why, suppose $\psi \in L^{2}$ is supported in Fourier space on $1 \leq |\xi| \leq 2$. Then the initial data
 \begin{equation}
 u_{0}(x) = \epsilon_{0} \sum_{j \in \mathbb{Z}} 2^{j} \psi(2^{j} x),
 \end{equation}
 satisfies the bounds $\| u_{0} \|_{B_{2, \infty}^{0}} \lesssim \epsilon_{0}$, as does the initial data,
 \begin{equation}\label{2.36}
 u_{0}(x) = \epsilon_{0} \sum_{j \in \mathbb{Z}} 2^{j} e^{i t_{j} \Delta} \psi(2^{j} x),
 \end{equation}
 where $t_{j}$ has subsequences converging to $+\infty$ and $-\infty$. Then $(\ref{2.5})$ and $(\ref{2.6})$ will not hold if weak convergence is replaced by convergence in $B_{2, \infty}^{0}$.
 
 If $B_{2, \infty}^{0}$ were replaced by $B_{2, q}^{0}$ for some $1 \leq q < \infty$, then $(\ref{2.5})$ and $(\ref{2.6})$ would hold in norm.
 \end{remark}

\section{Harmonic Map Heat Flow}
\label{sec:HMHF}

 In this section, we will focus on the case $d = 2$ throughout.  We will use the derivative formulation of the Schr{\"o}dinger maps problem. Since $d = 2$, if $\phi : \mathbb{R}^{2} \rightarrow \mathcal N \hookrightarrow \mathbb{R}^{N}$, then
\begin{equation}\label{3.1}
\nabla \phi : \mathbb{R}^{2} \rightarrow \mathbb{R}^{2N}.
\end{equation}
We will impose the Besov space condition on the initial data $\phi_{0}$ that
\begin{equation}\label{3.2}
\sup_{j \in \mathbb{Z}} \| P_{j} (\nabla \phi_{0}) \|_{L^{2}} \leq \epsilon.
\end{equation}
The extrinsic form of the harmonic map heat flow is given by
\begin{equation}\label{3.3}
\frac{\partial u}{\partial t} = \Delta u - \Gamma(u)(\nabla u, \nabla u), \qquad u(0) = \phi_{0},
\end{equation}
where $\Gamma(u) (\nabla u , \nabla u)$ is the second fundamental form, $\Gamma : T \mathcal N \times T \mathcal N \to N \mathcal N$.  
\begin{lemma}\label{l3.1}
The harmonic map heat flow $(\ref{3.3})$ is well-defined for initial data satisfying $(\ref{3.2})$.
\end{lemma}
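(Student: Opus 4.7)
The plan is to solve the harmonic map heat flow (3.3) by a contraction mapping argument applied to the Duhamel formulation
\[
u(s,x) = e^{s\Delta}\phi_0(x) - \int_0^s e^{(s-\tau)\Delta}\bigl(\Gamma(u)(\nabla u,\nabla u)\bigr)(\tau,x)\,d\tau,
\]
in a parabolically scaled Besov space on the half-space $\{s \geq 0\}$. The natural choice is a norm of the form
\[
\|u - Q\|_X := \sup_{s \geq 0}\sup_{j \in \mathbb{Z}} (1 + s\, 2^{2j})^{M}\,\|P_j \nabla u(s)\|_{L_x^2},
\]
with $M$ a large fixed integer. This enforces the hypothesis $\nabla \phi_0 \in B^0_{2,\infty}$ at $s = 0$ together with an additional parabolic decay past the characteristic time $s \sim 2^{-2j}$ for each frequency block.

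The first estimate is linear: the heat kernel bound $\|P_j e^{s\Delta} f\|_{L^2} \lesssim (1+s2^{2j})^{-M}\|P_j f\|_{L^2}$ immediately gives $\|e^{s\Delta}\phi_0 - Q\|_X \lesssim \|\nabla \phi_0\|_{B^0_{2,\infty}} \leq \epsilon$. The second is nonlinear. Since $\mathcal N$ is compact and smoothly embedded, $\Gamma$ is smooth and bounded, so expanding $\Gamma(u) = \Gamma(Q) + O(u-Q)$ reduces the right-hand side to $\Gamma(Q)(\nabla u,\nabla u)$ plus higher-order terms. For the main term I would Littlewood--Paley decompose $\nabla u = \sum_k P_k \nabla u$ and use the bilinear Bernstein bound
\[
\|P_j\bigl((P_k \nabla u)(P_\ell \nabla u)\bigr)\|_{L_x^1} \lesssim \|P_k \nabla u\|_{L_x^2}\|P_\ell \nabla u\|_{L_x^2},
\]
paired with $\|P_j e^{(s-\tau)\Delta} g\|_{L^2_x} \lesssim 2^j (1+(s-\tau)2^{2j})^{-M}\|g\|_{L^1_x}$, to control
\[
\Bigl\|P_j \int_0^s e^{(s-\tau)\Delta}(\nabla u \cdot \nabla u)(\tau)\,d\tau\Bigr\|_{L_x^2}.
\]
Summing the resulting geometric series over the high-high-to-low, low-high-to-high, and diagonal triples $(k,\ell,j)$ yields a bound of the form $C\|u-Q\|_X^2$. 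The higher-order contributions from the Taylor expansion of $\Gamma$ are handled in the same way, absorbing the additional factors of $u-Q$ into $L_x^\infty$ via the embedding of $X$ into $L^\infty_{s,x}$ that the Besov-type bound on $\nabla u$ implies after integrating.

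Combining the two estimates and choosing $\epsilon$ small enough closes a fixed-point iteration on a small ball in the affine space $Q+X$, producing a unique solution $u \in Q + X$. The manifold constraint is preserved along the flow: the subtraction of $\Gamma(u)(\nabla u,\nabla u)$ makes the right-hand side of (3.3) tangent to $\mathcal N$ at $u(s,\cdot)$ whenever $u(s,\cdot)$ already takes values in $\mathcal N$, and by continuity this persists from $s = 0$. The main obstacle is the critical scaling: $B^0_{2,\infty}$ for $\nabla \phi_0$ is exactly scaling-invariant for (3.3) in two dimensions, so the argument leaves no slack from H\"older in time or subcritical Sobolev embedding; every bilinear estimate must close at the endpoint. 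Moreover, $\ell^\infty$ (rather than $\ell^2$) summability in frequency forbids free use of Cauchy--Schwarz over Littlewood--Paley blocks, so the frequency sums must converge purely through the off-diagonal decay supplied by the heat kernel, which is why the parabolic weights $(1+s2^{2j})^{M}$ are built into $X$.
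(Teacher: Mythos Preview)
Your overall plan---Duhamel plus a parabolically weighted Besov norm---is the paper's, and your norm $X$ is equivalent to the paper's $a_j = \|\nabla P_j u\|_{L^\infty_s L^2_x} + \|\nabla^3 P_j u\|_{L^1_s L^2_x}$. Two steps, however, do not go through as written.

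First, the single bilinear route you state ($L^2_x \times L^2_x \to L^1_x$ on the inputs, then $L^1_x \to L^2_x$ via the heat kernel) does not sum geometrically in the low--high configuration. If $|\ell - j| \leq 5$ and $k \leq j - 5$, your bound becomes $\lesssim \epsilon^2 (1+s\,2^{2j})^{-M}$ with no decay in $k$, and the sum over $k \to -\infty$ diverges. The fix, which the paper carries out in $(\ref{3.12})$, is to use $L^\infty_x \times L^2_x \to L^2_x$ for this case, since $\|\nabla u_{\leq j}\|_{L^\infty_x} \lesssim \sum_{k \leq j} 2^k \epsilon \lesssim 2^j \epsilon$ does converge.

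Second, and more seriously, your treatment of the non-constant coefficient $\Gamma(u)$ breaks: the embedding $X \hookrightarrow L^\infty_{s,x}$ you invoke is false. From $\|P_j \nabla u(s)\|_{L^2} \lesssim (1+s\,2^{2j})^{-M}$ you get $\|P_j(u-Q)(s)\|_{L^\infty} \lesssim (1+s\,2^{2j})^{-M}$, but the sum over $j \to -\infty$ diverges for every $s \geq 0$; this is exactly the failure of the critical embedding $\dot B^{1}_{2,\infty}(\mathbb{R}^2) \hookrightarrow L^\infty$. Compactness of $\mathcal N$ does force $u - Q \in L^\infty$ trivially, but with no smallness, so it cannot close a contraction on your ``higher-order'' terms. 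The paper avoids Taylor-expanding $\Gamma$ around $Q$ altogether. It uses only that $\Gamma, \Gamma', \Gamma''$ are bounded (compactness of $\mathcal N$), and for the low--low $\to$ high paraproduct $P_k\bigl(\Gamma(u)(\nabla u_{\leq k}, \nabla u_{\leq k})\bigr)$ it gains a derivative via Bernstein, $\|P_k f\|_{L^2} \lesssim 2^{-k}\|\nabla P_k f\|_{L^2}$, and then distributes $\nabla$ through the product using $\nabla(\Gamma(u)) = \Gamma'(u)\nabla u$. This keeps every factor expressed through $\nabla u$ rather than $u - Q$, so the frequency sums close; see $(\ref{3.13})$--$(\ref{3.17})$.
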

\begin{proof}
The argument proving Lemma $\ref{l3.1}$ is almost identical to the argument proving Theorem $\ref{t2.1}$. We seek a solution that satisfies Duhamel's principle,
\begin{equation}\label{3.4}
u(t) = e^{t \Delta} u_{0} + \int_{0}^{t} e^{(t - \tau) \Delta} \Gamma(u)(\nabla u, \nabla u) d\tau.
\end{equation}
By direct computation,
\begin{equation}\label{3.5}
\| e^{t \Delta} P_{k} u_{0} \|_{L_{t}^{1} L_{x}^{2}([0, \infty) \times \mathbb{R}^{2})} \lesssim 2^{-2k} \| P_{k} u_{0} \|_{L^{2}},
\end{equation}
and
\begin{equation}\label{3.6}
\| e^{t \Delta} P_{k} u_{0} \|_{L_{t}^{\infty} L_{x}^{2}([0, \infty) \times \mathbb{R}^{2})} \lesssim \| P_{k} u_{0} \|_{L^{2}},
\end{equation}
so by the Sobolev embedding theorem,
\begin{equation}\label{3.7}
\| e^{t \Delta} P_{k} u_{0} \|_{L_{t}^{1} L_{x}^{\infty}([0, \infty) \times \mathbb{R}^{2})} \lesssim 2^{-k} \| P_{k} u_{0} \|_{L^{2}},
\end{equation}
and
\begin{equation}\label{3.8}
\| e^{t \Delta} P_{k} u_{0} \|_{L_{t,x}^{\infty}([0, \infty) \times \mathbb{R}^{2})} \lesssim 2^{k} \| P_{k} u_{0} \|_{L^{2}}.
\end{equation}

Now then,
\begin{align}\label{3.9}
& P_{k}(\Gamma(u)(\nabla u, \nabla u)) \\
& = P_{k}(\Gamma(u)(\nabla u_{\geq k}, \nabla u_{\geq k}) + 2 \Gamma(u)(\nabla u_{\leq k}, \nabla u_{\geq k})) + P_{k}(\Gamma(u)(\nabla u_{\leq k}, \nabla u_{\leq k})). \notag
\end{align}
Applying the Sobolev embedding theorem and that $\|P_k u \|_{L^2_x} \leq 2^k \| P_k u \|_{L^1}$, we get
\begin{align}\label{3.10}
& \| \nabla P_{k}(\Gamma(u)(\nabla u_{\geq k}, \nabla u_{\geq k})) \|_{L_{t}^{1} L_{x}^{2}}   \lesssim  2^{2k}  \| \nabla u_{\geq k} \|_{L_{t,x}^{2}}^{2} \\
\label{3.11}
& \hspace{1cm} \lesssim   2^{2k} (\sum_{j \geq k} 2^{-j} \| \nabla^{2} P_{j} u \|_{L_{t,x}^{2}})^{2}.
\end{align}
Next,
\begin{equation}\label{3.12}
\aligned
\| \nabla P_{k} (\Gamma(u)(\nabla u_{\geq k}, \nabla u_{\leq k})) \|_{L_{t}^{1} L_{x}^{2}} \lesssim 2^{k} \| \nabla  u_{\geq k} \|_{L_{t}^{1} L_{x}^{2}} \| \nabla  u_{\leq k} \|_{L_{t,x}^{\infty}} \\ \lesssim 2^{k} (\sum_{j \geq k} 2^{-2j} \| \nabla^{3} P_{j} u \|_{L_{t}^{1} L_{x}^{2}})(\sum_{j \leq k} 2^{j} \| P_{j} \nabla u \|_{L_{t}^{\infty} L_{x}^{2}}).
\endaligned
\end{equation}
Now, using that $\nabla \Gamma (u) = \Gamma'(u) \nabla u$, we have
\begin{align}\label{3.13}
& \| \nabla P_{k}(\Gamma(u)(\nabla u_{\leq k}, \nabla u_{\leq k})) \|_{L_{t}^{1} L_{x}^{2}}  \\
& \hspace{1cm} \lesssim \| P_{k}(\Gamma(u)(\nabla^{2} u_{\leq k}, \nabla u_{\leq k})) \|_{L_{t}^{1} L_{x}^{2}} + \| P_{k}(\Gamma'(u)(\nabla u_{\leq k}, \nabla u_{\leq k}) \nabla u) \|_{L_{t}^{1} L_{x}^{2}}. \notag
\end{align}
By Bernstein's inequality,
\begin{equation}\label{3.14}
\aligned
& \| P_{k}(\Gamma(u)(\nabla^{2} u_{\leq k}, \nabla u_{\leq k})) \|_{L_{t}^{1} L_{x}^{2}}  \lesssim 2^{-k} \| \nabla P_{k}(\Gamma(u)(\nabla^{2} u_{\leq k}, \nabla u_{\leq k})) \|_{L_{t}^{1} L_{x}^{2}} \\
& \hspace{.5cm} \lesssim 2^{-k} \| P_{k}(\Gamma(u)(\nabla^{3} u_{\leq k}, \nabla u_{\leq k})) \|_{L_{t}^{1} L_{x}^{2}} + 2^{-k} \| P_{k}(\Gamma(u)(\nabla^{2} u_{\leq k}, \nabla^{2} u_{\leq k})) \|_{L_{t}^{1} L_{x}^{2}} \\ 
&  \hspace{1cm} + 2^{-k} \| P_{k}(\Gamma'(u)(\nabla^{2} u_{\leq k}, \nabla u_{\leq k}) \nabla u) \|_{L_{t}^{1} L_{x}^{2}} \\
&  \hspace{.5cm}  \lesssim 2^{-k} \| \nabla^{3} u_{\leq k} \|_{L_{t}^{1} L_{x}^{4}} \| \nabla u_{\leq k} \|_{L_{t}^{\infty} L_{x}^{4}}  + 2^{-k} \| \nabla^{2} u_{\leq k} \|_{L_{t}^{2} L_{x}^{4}}^{2} \\
&  \hspace{1cm} + 2^{-k} \| \nabla u_{\geq k} \|_{L_{t,x}^{2}} \| \nabla^{2} u_{\leq k} \|_{L_{t}^{2} L_{x}^{\infty}} \| \nabla u_{\leq k} \|_{L_{t,x}^{\infty}}  + 2^{-k} \| \nabla u_{\leq k} \|_{L_{t}^{4} L_{x}^{8}}^{2} \| \nabla^{2} u_{\leq k} \|_{L_{t}^{2} L_{x}^{4}} \\
&  \hspace{.5cm} \lesssim 2^{-k} (\sum_{j \leq k} 2^{j/2} \| \nabla^{3} P_{j} u \|_{L_{t}^{1} L_{x}^{2}})(\sum_{j \leq k} 2^{j/2} \| \nabla P_{j} u \|_{L_{t}^{\infty} L_{x}^{2}}) + 2^{-k} (\sum_{j \leq k} 2^{j/2} \| \nabla^{2} P_{j} u \|_{L_{t,x}^{2}})^{2} \\
&  \hspace{1cm}  + 2^{-k} (\sum_{j \geq k} 2^{-j} \| \nabla^{2} P_{j} u \|_{L_{t,x}^{2}})(\sum_{j \leq k} 2^{j} \| \nabla P_{j} u \|_{L_{t}^{2} L_{x}^{\infty}})(\sum_{j \leq k} 2^{j} \| \nabla P_{j} u \|_{L_{t,x}^{\infty}}) \\ 
&  \hspace{1cm} + 2^{-k}(\sum_{j \leq k} 2^{j/2} \| \nabla^{2} P_{j} u \|_{L_{t,x}^{2}})(\sum_{j \leq k} 2^{j/4} \| \nabla P_{j} u \|_{L_{t,x}^{4}})^{2}.
\endaligned
\end{equation}
Also,
\begin{equation}\label{3.15}
\| P_{k}(\Gamma'(u)(\nabla u_{\leq k}, \nabla u_{\leq k}) \nabla u_{\geq k}) \|_{L_{t}^{1} L_{x}^{2}} \lesssim (\sum_{j \geq k} 2^{-j} \| \nabla^{2} P_{j} u \|_{L_{t,x}^{2}})(\sum_{j \leq k} 2^{j/2} \| \nabla P_{j} u \|_{L_{t,x}^{4}})^{2}.
\end{equation}
Therefore, it remains to bound
\begin{equation}\label{3.16}
\| P_{k}(\Gamma'(u)(\nabla u_{\leq k}, \nabla u_{\leq k}) \nabla u_{\leq k}) \|_{L_{t}^{1} L_{x}^{2}}.
\end{equation}
By Bernstein's inequality,
\begin{equation}\label{3.17}
\aligned
& (\ref{3.16}) \lesssim 2^{-k} \| \nabla P_{k}(\Gamma'(u)(\nabla u_{\leq k}, \nabla u_{\leq k}) \nabla u_{\leq k}) \|_{L_{t}^{1} L_{x}^{2}}  \\
& \lesssim 2^{-k} \| P_{k}(\Gamma(u)(\nabla^{2} u_{\leq k}, \nabla u_{\leq k}) \nabla u_{\leq k}) \|_{L_{t}^{1} L_{x}^{2}} + 2^{-k} \| P_{k}(\Gamma'(u)(\nabla u_{\leq k}, \nabla u_{\leq k}) \nabla^{2} u_{\leq k}) \|_{L_{t}^{1} L_{x}^{2}} \\
& \hspace{1cm}  + 2^{-k} \|  P_{k}(\Gamma''(u)(\nabla u_{\leq k}, \nabla u_{\leq k}) \nabla u_{\leq k} \nabla u) \|_{L_{t}^{1} L_{x}^{2}} \\
&  \lesssim  2^{-k}(\sum_{j \leq k} 2^{j/2} \| \nabla^{2} P_{j} u \|_{L_{t,x}^{2}})(\sum_{j \leq k} 2^{j/4} \| \nabla P_{j} u \|_{L_{t,x}^{4}})^{2} \\
& \hspace{1cm} + 2^{-k}(\sum_{j \leq k} 2^{j/4} \| \nabla P_{j} u \|_{L_{t,x}^{4}})^{4}+ 2^{-k}(\sum_{j \geq k} 2^{-j} \| \nabla^{2} P_{j} u \|_{L_{t,x}^{2}})(\sum_{j \leq k} 2^{j/3} \| \nabla P_{j} u \|_{L_{t}^{6} L_{x}^{3}})^{3}.
\endaligned
\end{equation}
Therefore, define the norm
\begin{equation}\label{3.18}
a_{j} = \sup_{j}(\| \nabla P_{j} u \|_{L_{t}^{\infty} L_{x}^{2}} + \| \nabla^{3} P_{j} u \|_{L_{t}^{1} L_{x}^{2}}).
\end{equation}
Then by $(\ref{3.10})$--$(\ref{3.17})$, $(\ref{3.2})$, and $(\ref{3.5})$--$(\ref{3.8})$,
\begin{equation}\label{3.19}
\sup_{j \in \mathbb{Z}} a_{j} \lesssim \epsilon + (\sup_{j \in \mathbb{Z}} a_{j}^{2}).
\end{equation}
Therefore,
\begin{equation}\label{3.20}
\sup_{j \in \mathbb{Z}} a_{j} \lesssim \epsilon.
\end{equation}

To make these computations rigorous, we know that the harmonic map heat flow has a local solution. Furthermore, this solution obeys $(\ref{3.20})$ on the interval of existence $[0, T]$. Since $(\ref{3.20})$ holds on this interval, $[0, T]$, we can use local well-posedness and show that $(\ref{3.20})$ holds on a slightly larger interval. Since the maximal interval of existence is open and closed in $[0, \infty)$, it must be all of $[0, \infty)$.
\end{proof}

\section{The Gauged Schr\"odinger Map Equation}
\label{sec:Gauges}

We give a brief overview here of the possible gauge choices for Schr\"odinger Maps from $\mathbb{R}^d$ to a $2n$ dimensional K{\"a}hler manifold target, denoted $(\mathcal N, J, h)$, for $J$ the symplectic form on $\mathcal N$ and $h$ the metric.  Begin with a smooth function $\phi: \mathbb{R}^d \times (-T,T) \to \mathcal N$.  At each point $\phi (x,t)$, there are tangent vectors to $\mathcal N$ at $\phi(x,t)$ that can be taken of the form
\begin{equation}
\label{4.1}
E := \{ e_1 (t,x), J e_1 (t,x), \dots, e_n(t,x), J e_n (t,x) \},
\end{equation}
which in the case $\mathcal N = \mathbb{S}^2$ is of the form given by an orthonormal frame $(v(t,x), w(t,x))$.  It is useful to consider the equations for the derivatives of $\phi$, so consider functions
\begin{equation}
\label{4.2}
{\tilde \psi}_m^\alpha = \langle \partial_m u, e_\alpha \rangle, \ \ {\tilde \psi}_m^{\bar \alpha} = \langle \partial_m u, J e_\alpha \rangle
\end{equation}
for $0 \leq m \leq d$ ($\partial_0 = \partial_t$ throughout) and $0 \leq \alpha \leq n$.  Set 
\begin{equation}
\label{4.4}
\psi_m^\alpha = {\tilde \psi}_m^\alpha + i {\tilde \psi}_m^{\bar \alpha}.
\end{equation}

Again, for $\mathcal N = \mathbb{S}^2$, this is typically considered using the complex coordinate
\begin{equation}
\label{4.5}
\psi_m = \langle v(t,x), \partial_m \phi \rangle + i \langle w(t,x), \partial_m \phi \rangle.
\end{equation}

Define the $d+1$ connection coefficient matrices (of size  $n\times n$) as 
\begin{equation}
\label{4.6}
[A_m]_{p,q} = \langle \partial_m e_p, e_q \rangle,
\end{equation}
which again for our example case is simply 
\begin{equation}
\label{4.7}
A_m = \langle \partial_m v, w \rangle.
\end{equation}
Letting $D_m = \partial_m + A_m$, we have the ability to reframe the Schr\"odinger map flow as
\begin{equation}
\label{4.8}
\psi_t = i \sum_{m=1}^d D_m \psi_m.
\end{equation}

Combining a large number of components of the structure of the equation and the frame, one arrives at the equations
\begin{equation}
\label{4.9}
(i \partial_t + \Delta) \psi_m = - 2 i \sum_{\ell=1}^d A_\ell \partial_\ell  \psi_m + ( A_0 + \sum_{\ell = 1}^d (A_\ell^2 - i \partial_\ell A_\ell) ) \psi_m - i \sum_{\ell = 1}^d \psi_\ell {\rm Im} (\overline{\psi_\ell} \psi_m).
\end{equation}

However, throughout here, $A$ is chosen up to setting the orthonormal frame, and selecting a particularly useful frame through the use of a gauge will be the discussion of the remainder of this section.  

\subsection{The Coulomb Gauge}

The Coulomb gauge tends to be reasonably simple and useful for $d \geq 3$.  In this case, we assume that
\begin{equation}
\label{4.10}
\sum_{m = 1}^d \partial_m A_m = 0,
\end{equation}
giving
\begin{equation}
\label{4.11}
A_m = \Delta^{-1} \sum_{\ell = 1}^d \partial_\ell  {\rm Im} (\overline{\psi_\ell} \psi_m)
\end{equation}
for $m=1,\dots, d$ and
 \begin{equation}
\label{4.11alt}
A_0 = \Delta^{-1} \sum_{\ell , m = 1}^d \Delta^{-1} \partial_\ell \partial_m   {\rm Re} (\overline{\psi_\ell} \psi_m) - \frac12 \sum_{m=1}^d |\psi_m|^2.
\end{equation}

This gauge works well in high dimensions, in particular in Sobolev based spaces it is effective for $d \geq 3$. We recall it here only to give insight into choices of gauge that are possible and to mention that for $d\geq 3$, many of the results below can be applied more easily using this gauge, see \cite{dodson2012bilinear}.

\subsection{The Caloric Gauge}

Since our main interest lies in the case $d=2$, we will need to use the caloric gauge that was used in \cite{bejenaru2011global} for the small data map to a sphere in two dimensions.  This gauge was also used in \cite{li2021global} to study the small data Schr\"odinger map equation into a compact K{\"a}hler manifold.  The caloric gauge was first used in \cite{tao2008global} in the case of wave maps.  To proceed, note that we will use the calculations in \cite{smith2013schrodinger} relating to caloric gauges very heavily. 

Let us recall the key components of the Caloric gauge and the structure of the Schr\"odinger map equation in such a setting.  Consider the harmonic map heat flow $(\ref{3.3})$, which by Lemma \ref{l3.1} is well-posed in Besov spaces.  Given the dissipative nature of the equation, it will be shown that as $s \to \infty$, $u(s,x)$ approaches equilibrium state $Q$.  Hence, at $\infty$, we can select a frame, say 
\begin{equation}
\label{4.12}
E_\infty := \{ e_{1,\infty} , J e_{1,\infty}, \dots, e_{n,\infty} , J e_{n,\infty}  \},
\end{equation}
an arbitrary orthonormal basis to $T_Q K$ that is independent to $x$ and $t$.  To define the frame for all $s \geq 0$, we pull back the frame $E_\infty$ using the backward heat flow.  A key relation we must establish is that that
\[
\langle e_1 (s,x) , \partial_s (Je_1) (x,s) \rangle = 0.
\]
In the setting of the Kahler manifolds, Definition $2.1$, \cite{li2021global} gives that for $\phi (x,t) : [-T,T] \times \mathbb{R}^d \to K$ and a given orthonormal frame $E_\infty$, then the {\it caloric gauge} is a map $z: \mathbb{R}^+ \times [-T,T] \times \mathbb{R}^d \to K$ solving
\begin{equation}\label{4.14}
\frac{\partial z}{\partial s} = \Delta z - \Gamma(z)(\nabla z, \nabla z), \qquad z(0,x;t) = \phi (t,x).
\end{equation}
and corresponding frames
 \begin{equation}
\label{4.15}
 E(v(s,x;t))  := \{ e_{1} (s,x;t)  , J e_{1} (s,x;t) , \dots, e_{n} (s,x;t)  , J e_{n} (s,x;t)   \},
\end{equation}
such that
\begin{equation}
\label{4.16}
\partial_s e_k (s,x;t) = 0, \ \ \lim_{s \to \infty} e_k (s,x;t) = e_{k,\infty}
\end{equation}
converging in the correct sense.  The existence of such frames has been established in the setting of Sobolev spaces in \cite{smith2013schrodinger} and in \cite{bejenaru2011global} using atomic-like spaces for $\mathcal{N} = \mathbb{S}^2$. Then, in this gauge, we can see that for $m = 0,\dots,d$, we have
\begin{equation}
\label{4.17}
[A_m]_{p,q} (s,x;t)  = -\int_s^\infty    \langle R( \partial_s v, \partial_m v) e_p, e_q \rangle d s'
\end{equation}
for $R$ the Riemannian curvature tensor on $\mathcal{N}$.  
 
The main issue is going from the Besov space smallness norm on the Schr\"odinger map to the Besov space smallness in gauge.  In particular, we will prove the following.

\begin{proposition}
\label{p5.1}
Let $\phi$ satisfy the smallness condition \eqref{3.2}.  There exists a unique, smooth $z$ satisfying \eqref{4.14} and smooth field functions as in \eqref{4.15}-\eqref{4.16} such that for $F \in \{ z, e_1, \dots, e_n \}$ we have
\begin{equation}
\label{4.18}
\| P_k F(s) \|_{L^\infty_t L^2_x} \leq \gamma_k (\sigma) (1 + s 2^{2k})^{-20} 2^{- \sigma k}
\end{equation}
and 
\begin{equation}
\label{4.19}
\sup_{k \in \mathbb{Z}} \sup_{s \in [0,\infty)} (s+1)^{\sigma/2} 2^{\sigma k} \| P_k \partial_t^\rho F(s) \|_{L^\infty_t L^2_x} < \infty.
\end{equation}
\end{proposition}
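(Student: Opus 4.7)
The plan is to build on Lemma \ref{l3.1}, which has already established well-posedness of the harmonic map heat flow $(\ref{4.14})$ for data satisfying the smallness condition $(\ref{3.2})$, and to strengthen its conclusions in two directions: first by upgrading the frequency-localized bounds to include the heat-smoothing weight $(1+s2^{2k})^{-20}$, and second by propagating those bounds to the caloric frame $\{e_{\alpha}\}_{\alpha=1}^{n}$ and to time derivatives.

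First I would iterate the Duhamel formula
\begin{equation*}
z(s) = e^{s\Delta}\phi - \int_{0}^{s} e^{(s-s')\Delta}\,\Gamma(z)(\nabla z,\nabla z)(s')\,ds',
\end{equation*}
using the standard heat-kernel smoothing bound $\|P_{k} e^{s\Delta} f\|_{L^{2}} \lesssim (1+s2^{2k})^{-M}\|P_{k} f\|_{L^{2}}$ for $M$ as large as needed, together with the estimate $\sup_{j} a_{j} \lesssim \epsilon$ from Lemma~\ref{l3.1}. A paraproduct decomposition of the nonlinearity $\Gamma(z)(\nabla z,\nabla z)$ splits the interactions into high-high, high-low and low-low pieces exactly as in $(\ref{3.9})$--$(\ref{3.17})$, and the factor $(1+s2^{2k})^{-20}$ is absorbed by exchanging derivatives for heat kernel decay. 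A bootstrap on the envelope $\gamma_{k}(\sigma)$, defined to be the smallest envelope dominating $2^{\sigma k}\|P_{k}\phi\|_{L^{2}}$ with slowly varying weights (in analogy with $\beta_{k}(\sigma)$ from $(\ref{8.5.1})$), then closes and yields $(\ref{4.18})$ for $F = z$. The dissipative structure of $(\ref{4.14})$, together with these bounds, forces $z(s,x;t) \to Q$ as $s \to \infty$ for some constant $Q \in \mathcal{N}$ independent of $(t,x)$.

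Next I would fix an orthonormal basis $E_{\infty}$ of $T_{Q}\mathcal{N}$ as in $(\ref{4.12})$ and construct the caloric frame by enforcing the caloric condition $A_{s} \equiv 0$, which is parallel transport along the backward heat flow from $s = \infty$ with prescribed limit $E_{\infty}$. The resulting intrinsic ODE admits the integral representation $(\ref{4.17})$ for the spatial and temporal connection coefficients $A_{m}$; substituting the bounds on $z$ already obtained and applying Littlewood--Paley analysis in $x$, the curvature term $R(\partial_{s} z, \partial_{m} z)$ inherits the same frequency envelope with one additional derivative, and the tail integral $\int_{s}^{\infty}$ then converts that extra derivative into the weight $(1+s2^{2k})^{-20}$ thanks to $\partial_{s} z = \Delta z - \Gamma(z)(\nabla z,\nabla z)$. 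Feeding these bounds on $A_{m}$ back into the parallel transport ODE and iterating gives $(\ref{4.18})$ for $F = e_{\alpha}$. For the time-derivative bounds $(\ref{4.19})$, I would differentiate $(\ref{4.14})$ repeatedly in $t$ to obtain linear heat equations with polynomial forcing in lower-order derivatives, using the Schr\"odinger map equation at $s=0$ to control $\partial_{t}^{\rho} z(0,\cdot;t)$ in the same Besov class, then close the estimate by induction on $\rho$.

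The main obstacle is the frame estimate and the self-referential nature of the tail integral $(\ref{4.17})$: because the $e_{\alpha}$ do not satisfy a parabolic equation directly, the heat-smoothing factor has to come entirely from $\partial_{s} z$ and $\partial_{m} z$ inside the integrand, and the cascading dependence of $A_{m}(s)$ on the whole tail $[s,\infty)$ must be handled by a bootstrap in which $(1+s2^{2k})^{-20}$ plays the role of an absorbing weight. Uniqueness of $z$ follows from the uniqueness in Lemma~\ref{l3.1}, and uniqueness of the frame from uniqueness of parallel transport once $E_{\infty}$ is prescribed.
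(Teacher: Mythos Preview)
Your proposal is correct and follows essentially the same approach as the paper: the paper likewise invokes Lemma~\ref{l3.1} for the existence of $z$, cites the proof of Lemma~8.3 in \cite{bejenaru2011global} (adapted from Besov norms to frequency envelopes) for the weighted $L^{\infty}_{t}L^{2}_{x}$ bounds on $P_{k}z$, and then constructs the frame by parallel transport from $s=\infty$ via the ODE $\partial_{s} v = [(\partial_{s} z) z^{T} - z (\partial_{s} z)^{T}] v$, which is the extrinsic form of your caloric condition $A_{s}\equiv 0$. Your outline is in fact more explicit than the paper's own treatment, which largely defers the details to \cite{bejenaru2011global}.
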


However, we need bounds on the associated functions $\psi_m$ and $A_m$ in the Besov norm. 

\subsection{Smallness in Besov norm leads to smallness in the gauge}

We begin with the assumption \eqref{3.2} for $\epsilon$ sufficiently small. Using the caloric gauge as defined above for $d \geq 2$, we will thus prove that $\sup_{j \in \mathbb{Z}} \| P_j \psi_m \|_{L^2} \leq C \epsilon$ and $\sup_{j \in \mathbb{Z}} \| P_j A_m \|_{L^2} \leq C \epsilon$.  A similar result holds for the Coulomb gauge for $d \geq 3$.  To see this, we begin by recalling some key results from \cite{bejenaru2011global} relating to frequency localized versions of \eqref{4.14}.  We will focus on the case $\mathbb{S}^2$ for simplicity of exposition to begin. Thus, we have
\begin{equation}
\label{eqn:Amdef}
A_m = - \sum_{\ell = 1}^d \int_0^\infty \Im ( \overline{\psi_m} ( \partial_\ell \psi_\ell + i A_\ell  \psi_\ell )) dr
\end{equation}
for the caloric gauge.

For both the Coulomb gauge and the caloric gauge, the covariant derivative coefficients are quadratic in their dependence upon $\psi$.  In both models, given a smooth frame $u,w$, we have
\begin{equation}
\label{4.20}
\psi_m = \langle v, \partial_m \phi \rangle + i \langle w, \partial_m \phi \rangle.
\end{equation}
Hence, 
\begin{equation}
\label{4.21}
\sup_{j \in \mathbb{Z}} \| P_j \psi_m \|_{H^{\frac{d-2}{2} } } \leq C \epsilon
\end{equation}
holds provided that such a smooth frame exists.  In the case of the Coulomb gauge, the existence of such a frame is now relatively standard and can be seen for instance in Section $2$ of \cite{bejenaru2007global}, with the properties that $v,\partial_t v, \partial_m v, w, \partial_t w, \partial_m w \in C([0,T] \times H^\infty)$.  
Thus, utilizing \eqref{4.11}, the model calculation of interest for the Coulomb gauge is for instance to prove
\begin{align}
\label{4.22}
& \| P_j A_1 \|_{H^{\frac{d-2}{2} } }  \lesssim \\
 & \hspace{.2cm} 2^{-j}  \| \sum_{\ell < j} (P_j \overline{\psi}_2) (P_\ell \psi_1) +  \sum_{\ell < j} (P_j \psi_1) (P_\ell \overline{\psi}_2) +  \sum_{ k \geq j, |k-\ell|< 10} P_j [(P_\ell \psi_1) (P_k\overline{\psi}_2) ]\|_{L^{\frac{d-2}{2}} } \notag \\
 & \hspace{.5cm}  \lesssim  \sum_{k \geq j}  2^{j \left(\frac{d-2}{2} \right) }  2^{-j} 2^{j  \left(\frac{d}{2} \right) }  2^{-k (d-2) } \| P_k \phi \|_{L^2}^2 =  \sum_{k \geq j}  2^{(j-k)(d-2)} \| P_k \phi \|_{H^\frac{d}{2}}^2 \notag
\end{align}
using Sobolev embeddings and Bernstein from $L^2$ to $L^1$.  This is why such a method is restricted to higher dimensions, even in case small Besov norms.  See for instance the treatment in the pre-print of the first author \cite{dodson2017global}.

In the case of the Caloric Gauge, we need to establish the existence of such a frame and make rigorous the observation from \cite{bejenaru2011global} that we have made the high-high interactions stronger in that
\begin{align}
\label{4.23}
& \| P_j A_1 \|_{H^{\frac{d-2}{2} } }  \lesssim \\
 & \hspace{.2cm} 2^{-j}  \| \sum_{\ell < j} (P_j \overline{\psi}_2) (P_\ell \psi_1) +  \sum_{\ell < j} (P_j \psi_1) (P_\ell \overline{\psi}_2) +  2^{j-k} \sum_{ k \geq j, |k-\ell|< 10} P_j [(P_\ell \psi_1) (P_k\overline{\psi}_2) ]\|_{H^{\frac{d-2}{2} } } \notag \\
 & \hspace{.5cm}  \lesssim \sum_{k \geq j} 2^{(j-k)} \| P_k \psi \|_{H^\frac{d}{2}}^2  . \notag
\end{align}
First of all, as we have established the existence of a solution to the Harmonic Map Heat Flow under our small Besov norm condition in Lemma \ref{l3.1}, let us notationally take
\begin{equation}
\label{4.24}
z = ({\rm HMHF})(s) (\phi(x,t)), \ \ \lim_{s \to \infty} z(\cdot,s) = Q
\end{equation}
for $\phi \in H^{\infty,\infty}$.  We are thus able to apply the proof of Lemma $8.3$ from \cite{bejenaru2011global} to establish $L^\infty_t L^2_x$ bounds on $P_j z$.  Similarly, we can construct a function $v: \mathbb{R}^d \to \mathbb{S}^2$ such that $v \cdot z = 0$ and
\begin{equation}
\label{4.25}
\partial_s v = [ (\partial_s z) z^T - z (\partial_s z)^T] v,
\end{equation}
as well as $w = v \times z$.  The key difference between the estimates that we require and the statements in Section $8$ of \cite{bejenaru2011global} is the use of frequency envelopes instead of Besov norm bounds, but the proofs are identical otherwise.

\subsection{Bounds on terms in the Caloric Gauge}

To proceed, we have to adapt some of the bounds for the Harmonic Map Heat Flow from \cite{dodsonsmith} to the setting of small Besov norm.   In order to prove our main theorem, we need some preliminary results using \cite{dodson2012bilinear,dodsonsmith,dodson2017global} adapted to our Besov norm.  To that end, define the appropriate frequency envelope $\alpha_k$ in the Besov norm for the solution $\Phi$.  First, for $d=2$, we let $\beta_{k}$ majorize 
\begin{equation}
 \| P_k \psi (0) \|_{L^2_{x}}
\end{equation}
and
\begin{equation}
 \| P_k \psi \|_{L^2_{x}} \leq \beta_{k}, \ \ \beta_{k} \leq 2^{\delta |k-\ell|} \beta (\ell), \ \ \sup_k \beta_{k} \leq  \epsilon (\| \phi \|_{\dot{H}^1}).
 \end{equation}
 In particular, for $d=2$, we let $\alpha_k$ majorize
\begin{equation}
 \| P_k \psi \|_{L^2_{x,t}}
\end{equation}
and
\begin{equation}
 \| P_k \psi \|_{L^2_{x,t}} \leq \alpha_k, \ \ \alpha_k \leq 2^{\delta |k-\ell|} \alpha (\ell), \ \ \sup_k \alpha_k \leq  \epsilon (\| \phi \|_{\dot{H}^1}).
 \end{equation}

To bound the Caloric gauge covariant terms, we recall that
\begin{equation}
P_k A_m (s) = - P_k \int_s^\infty \Im (\overline{\psi}_m ( \partial_\ell \psi_\ell + i A_\ell \psi_\ell) ) ds'.
\end{equation}

We start with the gauged harmonic map heat flow bounds proven above, that gives bounds on $\psi (s),A(s)$.  Using the corresponding decay bounds on $P_k \phi(s), P_k A(s)$, we are then ready to consider the gauged Schr\"odinger map flow.  To begin, the most difficult term to handle in the bootstrap is the quadratic term, $A \nabla \psi$.  

In order to prove this estimate, we consider the interactions $P_\ell A$, $P_k \psi$ and when $|\ell-k| > 10$, we will apply a bilinear estimate directly.  In particular, we have from \cite{dodson2012bilinear}, Theorem $1.3$ it was proven in the setting of smallness in the energy space that we have that for $|\ell -k| > 10$,
\begin{equation}
\| (P_k \overline{\psi  (s)} )( P_\ell \psi (s')) \|_{L^2 (I \times \mathbb{R}^2)} \lesssim 2^{-|\ell-k|/2} v_k v_\ell (1 + s 2^{2k} )^{-4} (1+ s' 2^{2 \ell})^{-4}.
\end{equation} 
We will prove a slightly modified result for smallness in the critical Besov space, namely that
\begin{equation}
\| (P_k \overline{\psi (s)} )( P_\ell \psi (s')) \|_{L^2 (I \times \mathbb{R}^2)} \lesssim 2^{-|\ell-k|/2} v_k v_\ell (1 + s 2^{2k} )^{-3} (1+ s' 2^{2 \ell})^{-3}.
\end{equation} 
The slightly weaker decay in $s$ arises due to the fact that since we are working in Besov spaces, we must sacrifice some decay in $s$ to make summation possible, as will be seen in the proofs below.  
Once we have such a result, we then have that
\begin{equation}
|P_k \sum_{| \ell - k | > 10} P_\ell A 2^k P_k \psi | \leq c(E_0) \varepsilon v_k^2 .
\end{equation}

First, let us note that as in Corollary 3.2 of \cite{dodson2012bilinear}, if $u$ solves the {\it free} Schr\"odinger equation 
\[ 
i \partial_t u + \Delta u = 0, \ \ u(x,0) = u_0, \ \  u_0 \in L^2 (\mathbb{R}^2),
\]
then for $M \ll N$,
\[
\| ( P_M u(t,x)) (P_N \bar{u} (t, x+ x_0)) \|_{L^2_{t,x} } \lesssim \frac{ M^{\frac12} }{N^{\frac12}} \| P_M u_0 \|_{L^2} \| P_N u_0 \|_{L^2}.
\]

To establish the bilinear estimate in the Caloric Gauge Schr\"odinger map equation, we follow closely the analysis in \cite{dodson2012bilinear} but make modifications when necessary to account for the smallness Besov norm modifications we require.  The primary methods of the proof involve using the Duhamel representation of the harmonic map heat flow to bound the elements in the gauged equations appropriately.    Hence, as in \cite{dodson2012bilinear}, we also have to recall some results from \cite{smith2012geometric}, Thm. 7.4 and Cor. 7.5.  We have that
\begin{align}
\label{eqn:Aheat1} 
\sup_{s>0} s^{(k+1)/2} \| \partial_x^k A_x (s) \|_{L^\infty_x} \lesssim_{E_{0,k}} \varepsilon, \\
\label{eqn:Aheat2} 
\sup_{s>0} s^{k/2} \| \partial_x^k A_x (s) \|_{L^2_x} \lesssim_{E_{0,k}} 1
\end{align}
for all $k \geq 0$ and $s > 0$.  In addition, we have
\begin{align}
\label{eqn:Aheat3} 
\int_0^\infty s^{(k-1)/2} \| \partial_x^k A_x (s) \|_{L^\infty_x} \lesssim_{E_{0,k}} 1, \\ 
\label{eqn:Aheat4} 
\int_0^\infty s^{(k-1)/2} \| \partial_x^{k+1} A_x (s) \|_{L^2_x} \lesssim_{E_{0,k}} 1.
\end{align}
We also have for $k \geq 1$, 
\begin{align}
\label{eqn:psiheat1} 
\sup_{s>0} s^{k/2} \| \partial_x^{k-1} \psi_x (s) \|_{L^\infty_x} \lesssim_{E_{0,k}} \varepsilon, \\
\label{eqn:psiheat2} 
\sup_{s>0} s^{(k-1)/2} \| \partial_x^{k-1} \psi_x (s) \|_{L^2_x} \lesssim_{E_{0,k}} 1, \\
\label{eqn:psiheat3} 
\int_0^\infty s^{k-1} \| \partial_x^k \psi_x (s) \|^2_{L^2_x} \lesssim_{E_{0,k}} 1, \\
\label{eqn:psiheat4} 
\int_0^\infty s^{k-1} \| \partial_x^{k-1} \psi_x (s) \|^2_{L^\infty_x} \lesssim_{E_{0,k}}  1 .  
\end{align}
Note, when $k > 1$, we have $\varepsilon$ improvements in the $L^2$ norms as well.

We want a slightly stronger version of these estimates using frequency envelopes to get a power of $\varepsilon$. First, we can demonstrate using the linear heat flow that \eqref{eqn:psiheat1} holds for $\psi = e^{s \Delta} \psi(0)$.  By translation invariance, it is easily seen that
 \begin{equation}
 \label{eqn:linheatbd1}
 \| P_k e^{s \Delta} \psi(0)  \|_{L^\infty_t L^2_{x}} \lesssim v_k (1 + s 2^{2k})^{-4}.
 \end{equation} 
 Note, we can replace the power $4$ above with any integer power and the estimate holds.
 Applying the Sobolev embedding theorem gives \eqref{eqn:psiheat1} for the linear heat flow.  We must then bootstrap this gain into the Duhamel term, which follows as in Lemma $5.3$ of \cite{dodsonsmith}.  The results for $A$ follow similarly.  The remaining bounds hold as in the work of \cite{smith2012geometric}.  Interpolating between $L^\infty$ and $L^1$, we can get a $\sqrt{\epsilon}$ gain as in \cite{dodsonsmith}, so let us verify if these are sufficient.


\begin{lemma}
\label{lem:lem5}
For any $k \geq 0$,
\begin{equation}
\sup_{s > 0} s^{\frac{k + 1}{2}} \| \partial_{x}^{k} A_{x}(s) \|_{L^{\infty}} \lesssim_{E_{0}, k} \epsilon,
\end{equation}
and for any $k \geq 1$,
\begin{equation}
\sup_{s > 0} s^{k/2} \| \partial_{x}^{k - 1} \psi_{x}(s) \|_{L^{\infty}} \lesssim_{E_{0}, k} \epsilon.
\end{equation}
\end{lemma}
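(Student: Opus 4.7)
The plan is to proceed exactly along the lines indicated just above the lemma statement: treat the harmonic map heat flow evolution of $\psi_x$ and $A_x$ via Duhamel, splitting into the contribution of the free heat semigroup acting on the initial data (where the frequency envelope $v_k$ with $\sup_k v_k \lesssim \epsilon$ will furnish the $\epsilon$ improvement) and the contribution of the quadratic-and-higher nonlinearity (where the envelope bound is propagated by a bootstrap closely modeled on Lemma $5.3$ of \cite{dodsonsmith}). The already-available $L^\infty$ estimates \eqref{eqn:Aheat1}--\eqref{eqn:Aheat2} and \eqref{eqn:psiheat1}--\eqref{eqn:psiheat2} give pointwise-in-$s$ control without the $\epsilon$, and our job is simply to insert one envelope-controlled factor into each nonlinear interaction to produce the $\epsilon$.

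The free part is the short step. Combining \eqref{eqn:linheatbd1} (with the power $4$ replaced by any $N$ we like) with Bernstein's inequality yields, for each Littlewood--Paley frequency $\ell$,
\begin{equation*}
\| P_\ell \partial_x^k e^{s\Delta} \psi_x(0) \|_{L^\infty_x} \lesssim 2^{(k+1)\ell}\, v_\ell\, (1 + s 2^{2\ell})^{-N}.
\end{equation*}
Summing in $\ell$ and balancing the derivative loss against the parabolic decay, the sum is dominated by $2^\ell \sim s^{-1/2}$ and, invoking $\sup_\ell v_\ell \lesssim \epsilon$, produces $\epsilon\, s^{-(k+1)/2}$. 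The same Bernstein argument with $k-1$ derivatives gives the bound claimed for $\psi_x$, and the analog for the initial-data piece of $A_x$ (which at the top of the hierarchy is essentially a quadratic expression in frequency-localized $\psi_x$'s integrated in $s$, cf.\ \eqref{eqn:Amdef}) follows by pairing the bilinear estimate for the heat flow with the same envelope sum.

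For the Duhamel/nonlinear part, we induct on $k$ and bootstrap the desired bound. The source terms in the heat flow equations for $\psi_x$ and $A_x$ are at least quadratic in $(\psi_x, A_x)$; we place one factor in the frequency-envelope-controlled norm to extract $\epsilon$ and estimate the remaining factors using \eqref{eqn:Aheat1}--\eqref{eqn:psiheat4} from \cite{smith2012geometric}. The smoothing of $e^{(s-s')\Delta}$ together with the parabolic decay $(1+(s-s')2^{2\ell})^{-N}$ makes the heat-time integral converge with the correct power of $s$, and the slow variation $v_k \leq 2^{\delta|k-\ell|} v_\ell$ of the envelope ensures that the frequency sums close. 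As the text remarks, one can interpolate between $L^\infty$ and $L^2$ to transfer envelope gains into the pointwise-in-$s$ $L^\infty_x$ estimate; a single factor of $\epsilon$ (rather than $\sqrt{\epsilon}$) suffices here because the nonlinearity is already quadratic.

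The one real point of care, and the step where the Besov setting could in principle differ from the $\dot H^1$ setting of \cite{smith2013schrodinger, dodsonsmith}, is the high-high to low cascade when $\partial_x^k$ falls on a product whose two factors live at comparable high frequencies: one must verify that the combined envelope summation $\sum_{\ell \geq k} 2^{(k-\ell)N'} v_\ell^2$ stays bounded by $\epsilon \, v_k$ for suitable $N'$. This is exactly the mechanism used in \cite{dodsonsmith} and it continues to work here because our envelope $v_k$ is only required to be slowly varying and bounded pointwise by $\epsilon$, not square-summable. With this verification in hand, the bootstrap closes and the lemma follows.
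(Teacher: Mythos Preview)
Your approach diverges from the paper's written proof and, as sketched, does not address the main difficulty.

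The paper's proof does \emph{not} try to extract an $\epsilon$ from a factor of the nonlinearity via the frequency envelope. Instead it uses a large-data absorption argument: from \cite{smith2012geometric} the same $L^\infty$ bounds already hold with $1$ in place of $\epsilon$, and moreover $\|\psi_x\|_{L^2_sL^\infty_x}^2+\|A_x\|_{L^2_sL^\infty_x}^2+\|\nabla\cdot A\|_{L^1_sL^\infty_x}\lesssim_{E_0}1$. One partitions $[0,\infty)$ into $L(\eta)\lesssim_{E_0}1$ subintervals $I_j$ on which each of these is $\le\eta$. On each $I_j$ the Duhamel integral is further split (at $\eta^{1/2}s$ or $\eta^{1/10}s$) and each piece is bounded, using different $L^p\to L^q$ mapping bounds for $e^{(s-s')\Delta}$ and $\nabla e^{(s-s')\Delta}$, by $\eta^\delta\cdot\sup_{s>0}s^{1/2}\|\psi_x(s)\|_{L^\infty}$ for some $\delta>0$. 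This yields
\[
\sup_{s\in I_0} s^{1/2}\|\psi_x(s)\|_{L^\infty}\lesssim \epsilon+\eta^\delta\sup_{s\in I_0} s^{1/2}\|\psi_x(s)\|_{L^\infty},
\]
which absorbs for $\eta$ small (independent of $\epsilon$), and then one iterates over the finitely many $I_j$. The $\epsilon$ enters only through the linear piece $s^{1/2}\|e^{s\Delta}\psi_x(0)\|_{L^\infty}\lesssim\epsilon$; once $\psi_x$ is handled, the bound for $A_x$ follows by plugging into the integral formula \eqref{eqn:Amdef}. Higher $k$ is then by induction with an analogous splitting of the Duhamel integral.

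Your scheme --- place one factor in an envelope norm to extract $\epsilon$ and bound the rest by \eqref{eqn:Aheat1}--\eqref{eqn:psiheat4} --- runs into trouble at the quadratic term $\nabla\cdot(A\psi_x)$. Those remaining factors are only $O_{E_0}(1)$, not small, and $E_0$ may be large here (only the Besov norm is assumed small). If you bootstrap the $L^\infty$ quantity directly and use the a-priori $\|A_x(s')\|_{L^\infty}\lesssim_{E_0}(s')^{-1/2}$, the integral $s^{1/2}\int_0^s(s-s')^{-1/2}(s')^{-1}\,ds'$ diverges at $s'=0$. If instead you bootstrap the envelope $\|P_k\psi_x(s)\|_{L^2}$, the low-$A$/high-$\psi$ interaction gives a contribution $C(E_0)\cdot v_k$, not $o(1)\cdot v_k$, so it cannot be absorbed. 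The high-high cascade you flagged is not the obstruction; the low-high piece is. The paper's $\eta$-partition is precisely the device that converts the \emph{finite} (but large) $s$-integrability of the coefficients into \emph{local-in-$s$} smallness, which is what makes the absorption go through without any smallness assumption on $E_0$.
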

\begin{proof}
We know from \cite{smith2012geometric} that $(4.50)$--$(4.52)$ and $(4.54)$--$(4.56)$ hold. Furthermore, we know from \cite{smith2012geometric} that for $k \geq 0$,
\begin{equation}
	\sup_{s > 0} s^{\frac{k + 1}{2}} \| \partial_{x}^{k} A_{x}(s) \|_{L^{\infty}} \lesssim_{E_{0}, k} 1,
\end{equation}
and for any $k \geq 1$,
\begin{equation}
	\sup_{s > 0} s^{k/2} \| \partial_{x}^{k - 1} \psi_{x}(s) \|_{L^{\infty}} \lesssim_{E_{0}, k} 1.
\end{equation}

Since $\psi_{x}(s)$ solves the harmonic map heat flow,
\begin{align}
 \psi_{x}(s) = e^{s \Delta} \psi_{x}(0) & + \int_{0}^{s} e^{(s - s') \Delta} \nabla \cdot (A \psi_{x}(s')) ds' + \int_{0}^{s} e^{(s - s') \Delta} (\nabla \cdot A) \psi_{x}(s') ds' \\
&  + \int_{0}^{s} e^{(s - s') \Delta} (A_{x}^{2} + \psi_{x}^{2}) \psi_{x}(s') ds'. \notag
\end{align}
We know from \eqref{eqn:Aheat1} that $\| \nabla \cdot A \|_{L_{s}^{1} L_{x}^{\infty}} \lesssim_{E_{0}} 1$. We also know from \eqref{eqn:Aheat3}  and \cite{smith2012geometric} that
\begin{equation}
\label{eqn:ALinfbds}
\int_{0}^{\infty} \| A_{x} \|_{L^{\infty}}^{2} ds \leq \sup_{s >0} s^{1/2} \| A_{x}(s) \|_{L^{\infty}} \cdot \int_{0}^{\infty} s^{-1/2} \| A_{x} \|_{L^{\infty}} ds \lesssim_{E_{0}} 1.
\end{equation}

Also, by  \eqref{eqn:psiheat3},
\begin{equation}
\label{eqn:psiLinfbds}
\int_{0}^{\infty} \| \psi_{x}(s) \|_{L^{\infty}}^{2} ds \lesssim_{E_{0}} 1.
\end{equation}
Therefore, we can partition $[0, \infty)$ into $\leq L(\eta)$ subintervals $I_{j}$ such that
\begin{equation}
\int_{I_{j}} \| \psi_{x}(s) \|_{L^{\infty}}^{2} + \| A_{x}(s) \|_{L^{\infty}}^{2} + \| \nabla \cdot A \|_{L^{\infty}} ds \leq \eta,
\end{equation}
for some $0 < \eta \ll 1$ to be specified later. Here we label the intervals $I_{0} = [0, s_{1})$, $I_{1} = [s_{1}, s_{2})$, ..., $I_{L(\eta) - 1} = [s_{L(\eta) - 1}, \infty)$.\medskip

We will start by considering only $ \in I_0$.  First observe that for all $s \in [0, \infty)$,
\begin{equation}
s^{1/2} \| e^{s \Delta} \psi_{x}(0) \|_{L^{\infty}} \lesssim \epsilon.
\end{equation}
Next, split
\begin{equation}
\aligned
s^{1/2} \| \int_{0}^{s} e^{(s - s') \Delta} (A_{x}^{2} + \psi_{x}^{2}) \psi_{x}(s') ds' \|_{L^{\infty}} \leq s^{1/2} \| \int_{0}^{\eta^{1/2} s} e^{(s - s') \Delta} (A_{x}^{2} + \psi_{x}^{2}) \psi_{x}(s') ds' \|_{L^{\infty}} \\ + s^{1/2} \| \int_{\eta^{1/2} s}^{s} e^{(s - s') \Delta} (A_{x}^{2} + \psi_{x}^{2}) \psi_{x}(s') ds' \|_{L^{\infty}}.
\endaligned
\end{equation}
Since $\| e^{(s - s') \Delta} \|_{L^{1} \rightarrow L^{\infty}} \lesssim \frac{1}{|s - s'|}$, by \eqref{eqn:Aheat3},  \eqref{eqn:psiheat3} we have
\begin{equation}
	\aligned
s^{1/2} \| \int_{0}^{\eta^{1/2} s} e^{(s - s') \Delta} (A_{x}^{2} + \psi_{x}^{2}) \psi_{x}(s') ds' \|_{L^{\infty}} & \lesssim s^{1/2} \int_{0}^{\eta^{1/2} s} \frac{1}{s - s'} (s')^{-1/2} ds' \cdot (\sup_{s >0} s^{1/2} \| \psi_{x}(s) \|_{L^{\infty}}) \\
& \lesssim \eta^{1/4} (\sup_{s >0} s^{1/2} \| \psi_{x}(s) \|_{L^{\infty}}).
\endaligned
\end{equation}
Meanwhile, since $\| e^{(s - s') \Delta} \|_{L^{\infty} \rightarrow L^{\infty}} \lesssim 1$ and we have assumed $s \in I_0$
\begin{equation}
\aligned
s^{1/2} \| \int_{\eta^{1/2} s}^{s} e^{(s - s') \Delta} (A_{x}^{2} + \psi_{x}^{2}) \psi_{x}(s') ds' \|_{L^{\infty}} &\lesssim (\| A_{x} \|_{L_{s}^{2} L_{x}^{\infty}}^{2} + \| \psi_{x} \|_{L_{s}^{2} L_{x}^{\infty}}^{2}) (\sup_{s' > \eta^{1/2} s} s^{1/2} \| \psi_{x}(s') \|_{L^{\infty}}) \\
&\lesssim \eta^{3/4} (\sup_{s > 0} s^{1/2} \| \psi_{x}(s) \|_{L^{\infty}}).
\endaligned
\end{equation}

Now turn to $\int_{0}^{s} e^{(s - s') \Delta} (\nabla \cdot A) \psi_{x}(s') ds'$. Using the bound $\| e^{(s - s') \Delta} \|_{L^{4/3} \rightarrow L^{\infty}} \lesssim \frac{1}{(s - s')^{3/4}}$, along with the bound $\| (\nabla \cdot A) \|_{L^{4/3}} \lesssim_{E_{0}} s^{-1/4}$,
\begin{equation}
	\aligned
s^{1/2} & \| \int_{0}^{\eta^{1/2} s} e^{(s - s') \Delta} (\nabla \cdot A) \psi_{x}(s') ds' \|_{L^{\infty}} \\
&  \lesssim_{E_{0}} s^{1/2} \int_{0}^{\eta^{1/2} s} \frac{1}{(s - s')^{3/4}} (s')^{-1/2} ds' \cdot \sup_{s > 0} s^{1/2} \| \psi_{x}(s) \|_{L^{\infty}} \\ 
& \lesssim \eta^{1/4} \cdot \sup_{s > 0} s^{1/2} \| \psi_{x}(s) \|_{L^{\infty}}.
\endaligned
\end{equation}
\begin{claim}
We have
\begin{equation}
\| \nabla \cdot A \|_{L^{4/3}} \lesssim_{E_{0}} s^{-1/4}.
\end{equation}
\end{claim}
\begin{proof}[Proof of claim]
To see why this is true, recall the formula for $A_{x}$,
\begin{equation}
A_{x}(s) = \int_{s}^{\infty} Im(\bar{\psi} (\partial_{l} + i A_{l}) \psi_{x}) ds'.
\end{equation}
Taking a derivative and using the product rule, and interpolating $(4.49)$--$(4.56)$ with the $\epsilon$'s all replaced by $1$'s,
\begin{equation}
\aligned
\| \nabla A \|_{L^{4/3}} \lesssim \int_{s}^{\infty} \| \psi_{x} \|_{L^{4}} \| \partial_{x}^{2} \psi_{x} \|_{L^{2}} + \| \partial_{x} \psi \|_{L^{8/3}}^{2} + \| A_{x} \|_{L^{4}} \| \partial_{x} \psi_{x} \|_{L^{4}} \| \psi_{x} \|_{L^{4}} + \| \partial_{x} A_{x} \|_{L^{4}} \| \psi_{x} \|_{L^{4}}^{2} ds' \\
\lesssim_{E_{0}} \int_{s}^{\infty} (s')^{-5/4} ds' \lesssim s^{-1/4}.
\endaligned
\end{equation}
\end{proof}
Meanwhile,
\begin{align}
& s^{1/2} \| \int_{\eta^{1/2} s}^{s} e^{(s - s') \Delta} (\nabla \cdot A) \psi_{x}(s') ds' \|_{L^{\infty}} \lesssim  \\
& \hspace{1cm} s^{1/2} \| \nabla \cdot A \|_{L_{s}^{1} L_{x}^{\infty}} (\sup_{s' > \eta^{1/2} s} \| \psi_{x}(s') \|_{L^{\infty}}) \lesssim  \\
& \hspace{2cm} \eta^{3/4} (\sup_{s > 0} s^{1/2} \| \psi_{x}(s) \|_{L^{\infty}}).
\end{align}

Finally turn to $\| \int_{0}^{s} e^{(s - s') \Delta} \nabla (A \psi_{x}(s')) ds' \|_{L^{\infty}}$. Since $\| \nabla e^{(s - s') \Delta} \|_{L^{2} \rightarrow L^{\infty}} \lesssim \frac{1}{s - s'}$, by $(4.50)$,
\begin{equation}
	\aligned
s^{1/2} \| \int_{0}^{\eta^{1/10} s} e^{(s - s') \Delta} \nabla (A \psi_{x}) ds' \|_{L^{\infty}} \lesssim_{E_{0}} s^{1/2} \int_{0}^{\eta^{1/10} s} \frac{1}{s - s'} (s')^{-1/2} ds' \cdot \sup_{s > 0} s^{1/2} \| \psi_{x}(s) \|_{L^{\infty}} \\
\lesssim \eta^{1/20} \cdot \sup_{s > 0} s^{1/2} \| \psi_{x}(s) \|_{L^{\infty}}.
\endaligned
\end{equation}
Next, interpolating $\| A \|_{L_{s}^{2} L_{x}^{\infty}} \leq \eta^{1/2}$ with $(4.53)$, gives $\| s^{1/4} A_{x} \|_{L_{s}^{4} L_{x}^{\infty}} \lesssim_{E_{0}} \eta^{1/4}$. Therefore, since $\| \nabla e^{(s - s') \Delta} \|_{L^{\infty} \rightarrow L^{\infty}} \lesssim \frac{1}{(s - s')^{1/2}}$,
\begin{equation}
	\aligned
s^{1/2} & \| \int_{\eta^{1/10} s}^{s} e^{(s - s') \Delta} \nabla (A \psi_{x}) ds' \|_{L^{\infty}} \\
\lesssim & s^{1/2} \int_{\eta^{1/10} s}^{s} \frac{1}{(s - s')^{1/2}} (s')^{-1/4} \| (s')^{1/4} A_{x} \|_{L^{\infty}} (s')^{-1/2} \| (s')^{1/2} \psi_{x} \|_{L^{\infty}} ds' \\
& \lesssim_{E_{0}}  \eta^{1/4} \eta^{-3/40}  \cdot \sup_{s > 0} s^{1/2} \| \psi_{x}(s) \|_{L^{\infty}}.
\endaligned
\end{equation}
Putting this all together,
\begin{equation}
\sup_{s \in I_{0}} s^{1/2} \| \psi_{x}(s) \|_{L^{\infty}} \lesssim \epsilon + \eta^{1/20} \sup_{s \in I_{0}} s^{1/2} \| \psi_{x}(s) \|_{L^{\infty}},
\end{equation}
which implies, for $\eta \ll 1$ sufficiently small, independent of $\epsilon > 0$,
\begin{equation}
\sup_{s \in I_{0}} s^{1/2} \| \psi_{x}(s) \|_{L^{\infty}} \lesssim \epsilon.
\end{equation}
Furthermore, observe that we could apply the same computations to $s > s_{1}$ if the interval over which the Duhamel term was computed was contained in $I_{0}$. Therefore, arguing by induction, we have proved
\begin{equation}
s^{1/2} \| \psi_{x}(s) \|_{L^{\infty}} \lesssim_{L(\eta)} \epsilon.
\end{equation}
Since $\eta$ does not depend on $\epsilon > 0$, and $L(\eta)$ depends only on $E_{0}$, the proof is complete.\medskip

Plugging this fact into the formula for $A_{x}(s)$,
\begin{equation}\label{4.67}
\| A_{x}(s) \|_{L^{\infty}} \lesssim \int_{s}^{\infty} \| \psi_{x}(s') \|_{L^{\infty}} \| \partial_{x} \psi_{x}(s') \|_{L^{\infty}} + \| \psi_{x}(s') \|_{L^{\infty}}^{2} \| A_{x}(s') \|_{L^{\infty}} \lesssim_{E_{0}} \epsilon s^{-1/2}.
\end{equation}
The estimates of terms with $\partial_{x}^{k}$ is identical. In this case, split the Duhamel integral into
\begin{equation}
\int_{0}^{s/2} e^{(s - s') \Delta} F(s') ds' + \int_{s/2}^{s} e^{(s - s') \Delta} [(\nabla \cdot A) \psi_{x} + (A_{x}^{2} + \psi_{x}^{2}) \psi_{x}] ds' + \int_{s/2}^{s} e^{(s - s') \Delta} \nabla(A \psi_{x}) ds',
\end{equation}
where $F = \nabla (A \psi_{x}) + (\nabla \cdot A) \psi_{x} + (A_{x}^{2} + \psi_{x}^{2}) \psi_{x}$. Since 
\begin{equation}
\| \partial_{x}^{k} e^{(s - s') \Delta} \|_{L^{q} \rightarrow L^{p}} \lesssim \frac{1}{(s - s')^{k/2 + \frac{1}{q} - \frac{1}{p}}} 
\end{equation} 
for $q \leq p$,
applying the above calculations for $k = 0$ to higher $k$'s gives
\begin{equation}
\| \partial_{x}^{k} \int_{0}^{s/2} e^{(s - s') \Delta} F(s') ds' \|_{L^{\infty}} \lesssim_{E_{0}, k} s^{-(k + 1)/2}.
\end{equation}
For $s/2 < s' < s$, using the bound $\| \partial_{x} e^{(s - s') \Delta} \|_{L^{\infty} \rightarrow L^{\infty}} \lesssim \frac{1}{(s - s')^{1/2}}$, for $k \geq 1$,
\begin{align}
& \| \partial_{x}^{k} \int_{s/2}^{s} e^{(s - s') \Delta} [(\nabla \cdot A) \psi_{x} + (A_{x}^{2} + \psi_{x}^{2}) \psi_{x}] ds' \|_{L^{\infty}} \lesssim \\
& \hspace{1cm}   \| \int_{s/2}^{s} \frac{1}{|s - s'|^{1/2}} \| \partial_{x}^{k -1} [(\nabla \cdot A) \psi_{x} + (A_{x}^{2} + \psi_{x}^{2}) \psi_{x}] ds' \|_{L^{\infty}}. \notag
\end{align}
Using $(4.49)$ and $(4.53)$ up to $k -1$, $(4.50)$--$(4.52)$, $(4.54)$--$(4.56)$, and arguing by induction and using the product rule,
\begin{equation}
\lesssim_{k, E_{0}} \int_{s/2}^{s} \frac{1}{|s - s'|^{1/2}} \epsilon (s')^{-k/2} (s')^{-1} ds' \lesssim_{E_{0}, k} s^{-(k + 1)/2}.
\end{equation}
Finally, for $\| \partial_{x}^{k + 1} \int_{s/2}^{s} e^{(s - s') \Delta} (A \psi_{x}) ds' \|_{L^{\infty}}$,
\begin{equation}
\aligned
\| \partial_{x}^{k + 1} \int_{s/2}^{s} e^{(s - s') \Delta} (A \psi_{x}) ds' \|_{L^{\infty}} \lesssim_{k} \int_{s/2}^{s} \frac{1}{|s - s'|^{1/2}} \| \partial_{x}^{k} (A_{x} \psi_{x}) \|_{L^{\infty}} ds' \\ \lesssim_{k} \sum_{j = 0}^{k} \int_{s/2}^{s} \frac{1}{|s - s'|^{1/2}} \| \partial_{x}^{j} A_{x} \|_{L^{\infty}} \| \partial_{x}^{k - j} \psi_{x} \|_{L^{\infty}} ds'.
\endaligned
\end{equation}
Since either $j < k$ or $k - j < k$, arguing by induction using $(4.59)$ up to $k - 1$ or $(4.53)$ up to $k - 1$, along with $(4.50)$--$(4.52)$ and $(4.54)$--$(4.56)$,
\begin{equation}
\lesssim_{E_{0}, k} \int_{s/2}^{s} \frac{\epsilon}{|s - s'|^{1/2}} (s')^{-k - 1} ds' \lesssim_{E_{0}, k} \epsilon s^{-(k + 1)/2}.
\end{equation}
This proves the lemma. Plugging the bounds on $\partial_{x}^{k} \psi_{x}(s)$ into the formula for $A$ proves the bounds for $\partial_{x}^{k} A$ as well.
\end{proof}

\subsection{$L^4$ bounds}

 We need to make a bootstrapping assumption on the solution to the harmonic map heat flow, namely that for any $k$,
\begin{equation}\label{4.76}
\| P_{k} \psi_{x}(s) \|_{L^4_{t,x}} \leq v_{k} (1 + s 2^{2k})^{-4}, \qquad |v_{k}| \leq \epsilon \ll 1.
\end{equation}


\begin{lemma}
\label{lem:Adecay}
Under the conditions of $(\ref{4.76})$,
\begin{equation}\label{4.77}
\| P_{k} A_{x}(s) \|_{L^4_{x,t}} \lesssim \epsilon v_{k} (1 + s 2^{2k})^{-7/2}.
\end{equation}
\end{lemma}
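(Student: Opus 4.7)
Starting from the caloric-gauge identity \eqref{eqn:Amdef},
\[
A_m(s) = -\int_s^\infty \mathrm{Im}\bigl(\bar\psi_m(\partial_\ell \psi_\ell + i A_\ell \psi_\ell)\bigr)(s')\,ds',
\]
I would apply $P_k$ to both sides, take $L^4_{t,x}$, and bring the heat-time integral outside by Minkowski. It then suffices to bound $\|P_k[\bar\psi\,\partial\psi](s')\|_{L^4_{t,x}}$ and $\|P_k[\bar\psi\,A\,\psi](s')\|_{L^4_{t,x}}$ pointwise in $s'$, and to integrate against the resulting $s'$-dependence. The final $s'$-integration is the source of the decay exponent $7/2$: for any $N>1$,
\[
\int_s^\infty (1+s'2^{2k})^{-N}\,ds' \lesssim 2^{-2k}(1+s2^{2k})^{-(N-1)},
\]
so an integrand decaying like $(1+s'2^{2k})^{-9/2}$ will produce the target $(1+s2^{2k})^{-7/2}$ with the free gain $2^{-2k}$ absorbed against the loss of the $\partial$ in the paraproduct.

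For the principal quadratic term $P_k(\bar\psi\,\partial\psi)$, I would perform a standard paraproduct decomposition into high-high ($j_1\sim j_2\geq k-10$) and high-low/low-high ($j_1\sim k$, $j_2<k-10$, or symmetric) pieces. In the high-low case, H\"older and Bernstein give $\|P_k(P_{j_1}\bar\psi\cdot P_{j_2}\partial\psi)\|_{L^4_{t,x}} \lesssim \|P_{j_1}\psi\|_{L^4_{t,x}} \|P_{j_2}\partial\psi\|_{L^\infty_{t,x}}$, with the low-frequency factor controlled via Bernstein from the heat-decayed $L^\infty_tL^2_x$ bounds coming from Lemma \ref{l3.1}. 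The sum $\sum_{j_2<k}2^{j_2}v_{j_2}(1+s'2^{2j_2})^{-4}$ converges geometrically and is controlled by $\epsilon\min(2^k,(s')^{-1/2})$ using the envelope $v_{j_2}\leq 2^{\delta|k-j_2|}v_k$. In the high-high case, I would invoke the bilinear estimate of Lemma \ref{lbilinear} (at equal heat times $s'=s'$) to bound $\|P_{j_1}\bar\psi\cdot P_{j_2}\partial\psi\|_{L^2_{t,x}}$, then lift to $L^4_{t,x}$ via Bernstein in space applied to the $P_k$-localized product, summing over $j\geq k$ using the envelope to ensure geometric convergence. At least one of the two $\psi$ factors contributes its smallness $\leq\epsilon$ (via $\sup_jv_j\lesssim\epsilon$), while the other carries the envelope $v_k$, explaining the prefactor $\epsilon v_k$ in the statement.

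The cubic remainder $P_k(\bar\psi A\psi)$ is treated by a similar paraproduct decomposition, but now invoking the pointwise bound $\|A_x(s')\|_{L^\infty_x}\lesssim_{E_0}\epsilon(s')^{-1/2}$ from Lemma \ref{lem:lem5} together with the $L^4_{t,x}$ bootstrap \eqref{4.76} on the two $\psi$ factors and the $L^\infty_tL^2_x$ bound where needed; since $A$ is already $O(\epsilon)$ by hypothesis and receives an extra power of $\epsilon$ from one $\psi$, this term is subleading and closes the bootstrap on $A$ itself with a strictly smaller constant.

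The main obstacle I anticipate is achieving the $7/2$ decay without a logarithmic loss in the paraproduct summation: one must choose the frequency envelope parameter $0<\delta<\tfrac12$ carefully so that both the high-high tail $\sum_{j\geq k}$ and the high-low tail $\sum_{j<k}$ are summable against the $(1+s'2^{2j})^{-4}$ heat weights, and one must be careful in the regime where $s'2^{2j}$ is of unit order, which is where the dyadic sums concentrate. Interpolating the $-4$ decay of \eqref{4.76} against the slower decay emerging from the $s'$-integration is what produces the intermediate exponent $7/2$; this mild decay loss is harmless for the subsequent bilinear arguments in Section \ref{sec:SchMap}.
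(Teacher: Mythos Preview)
Your overall strategy---Minkowski in $s'$, paraproduct decomposition, and the observation that the $7/2$ exponent arises from integrating an integrand of the form $2^{k}(s')^{-1/2}(1+s'2^{2k})^{-4}$---is correct, and your high--low treatment (one factor in $L^{4}_{t,x}$ via \eqref{4.76}, the low factor in $L^{\infty}_{t,x}$ via Bernstein from the heat-decayed $L^{\infty}_{t}L^{2}_{x}$ bound) matches the paper's \eqref{4.28}--\eqref{4.30} essentially verbatim. The cubic remainder is also handled along the right lines, as in the paper's \eqref{eqn:AL4bd3}--\eqref{eqn:AL4bd4}.

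The gap is in the high--high case $j_{1}\sim j_{2}\geq k-5$. Invoking Lemma~\ref{lbilinear} here does not work, for three reasons. First, that lemma only applies to \emph{separated} frequencies $|j-k|>10$, whereas here the two input frequencies are comparable. Second, even if you had an $L^{2}_{t,x}$ bound on the product, spatial Bernstein $\|P_{k}f\|_{L^{4}_{x}}\lesssim 2^{k/2}\|f\|_{L^{2}_{x}}$ only converts $L^{p}_{t}L^{2}_{x}$ to $L^{p}_{t}L^{4}_{x}$; it cannot upgrade $L^{2}_{t}$ to $L^{4}_{t}$, so you would land in $L^{2}_{t}L^{4}_{x}$, not $L^{4}_{t,x}$. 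Third, there is a logical-ordering issue: the proof of Lemma~\ref{lbilinear} in Section~\ref{sec:SchMap} explicitly invokes the present lemma (see \eqref{7.17}, \eqref{7.25.2}, \eqref{7.31}), so using it here is circular. What the paper actually does for the high--high piece (see \eqref{4.85}) is place one factor in $L^{\infty}_{t}L^{2}_{x}$---available directly from the Besov smallness and the heat-flow estimate \eqref{eqn:linheatbd1}, giving the $\epsilon$ and the $(1+s'2^{2j})^{-4}$ decay---and the other in $L^{4}_{t,x}$ from \eqref{4.76}, then apply spatial Bernstein $L^{4/3}_{x}\to L^{4}_{x}$ (costing $2^{k}$) so that H\"older in time reads $L^{\infty}_{t}\times L^{4}_{t}\to L^{4}_{t}$. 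This yields $2^{k}\sum_{j\geq k-5}2^{j}\epsilon\,v_{j}(1+s'2^{2j})^{-8}$, which sums geometrically to $\epsilon v_{k}(1+s'2^{2k})^{-4}$ and integrates in $s'$ to the desired bound.
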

\begin{proof}
Using 
\begin{equation}
A_{x}(s) = \int_{s}^{\infty} \psi_{x}(s') \nabla \psi_{x}(s') ds' + \int_s^\infty A_x \psi_x^2 \ (s') d s'
\end{equation}
and applying \eqref{eqn:Aheat1}-\eqref{eqn:psiheat4}, we have
\begin{align}
\label{eqn:AL4bd1}
\| P_k A_x (s) \|_{L^4_{t,x}} & \lesssim \int_s^\infty  \| P_{\geq k} \nabla \psi_x (s') \|_{L^4_{t,x}} \| \psi_x (s') \|_{L^\infty_{t,x}} ds' \\
\label{eqn:AL4bd2}
& +  \int_s^\infty  \| P_{\leq k-5} \nabla \psi_x (s') \|_{L^\infty_{t,x}} \| P_k \psi_x (s') \|_{L^4_{t,x}} ds' \\
\label{eqn:AL4bd3}
& + \int_s^\infty  \| P_{ k-5 \leq \cdot \leq k +5 } A_x (s') \|_{L^\infty_{t,x}} \| P_{\leq k-5} \psi_x (s') \|_{L^8_{t,x}}^2 ds' \\
\label{eqn:AL4bd4}
& +   \int_s^\infty  \| A_x (s') \|_{L^\infty_{t,x}} \|  \psi_x (s') \|_{L^\infty_{t,x}}  \|  P_{\geq k -5} \psi_x (s') \|_{L^4_{t,x}} ds' .
\end{align}

For a fixed $s$, $2^{-2k} \leq s \leq 2^{-2(k - 1)}$,
\begin{equation}\label{4.28}
\| \psi_{x}(s) \|_{L^{\infty}} \lesssim \sum_{l \leq k} \| P_{l} \psi_{x}(s) \|_{L^{\infty}} + \sum_{l > k} \| P_{l} \psi_{x}(s) \|_{L^{\infty}} \lesssim \epsilon \sum_{l \leq k} 2^{l} + \epsilon \sum_{l > k} 2^{l} 2^{-8l} s^{-4} \epsilon s^{-1/2},
\end{equation}
and
\begin{equation}\label{4.29}
\| \nabla P_{\leq k + 5} \psi_{x}(s) \|_{L^{\infty}} \lesssim \epsilon \sum_{l \leq k + 5} 2^{2l} \lesssim \epsilon 2^{k} s^{-1/2}.
\end{equation}
Since $v_{k}$ is a frequency envelope,
\begin{equation}\label{4.30}
\aligned
& \int_{s}^{\infty} \| \psi_{x}(s') \|_{L^{\infty}} \| \nabla P_{k - 5 \leq \cdot \leq k + 5} \psi_{x}(s') \|_{L^4_{t,x}} ds' \\
& + \int_{s}^{\infty} \| P_{k - 5 \leq \cdot \leq \cdot k + 5} \psi_{x}(s') \|_{L^4_{t,x}} \| \nabla P_{\leq k + 5} \psi_{x}(s') \|_{L^{\infty}} ds' \\
& \lesssim \int_{s}^{\infty} \frac{2^{k} \epsilon v_{k}}{(s')^{1/2}} (1 + s' 2^{2k})^{-4} ds' \lesssim \epsilon v_{k} (1 + s 2^{2k})^{-7/2}.
\endaligned
\end{equation}
Using the Sobolev embedding theorem,
\begin{equation}\label{4.85}
\aligned
& \sum_{j > k - 5} 2^{k} \int_{s}^{\infty} \| P_{j} \psi_{x}(s') \|_{L^{2}} \| \nabla P_{j} \psi_{x}(s') \|_{L^4_{t,x}} ds' \\
& \hspace{1cm} \lesssim \sum_{j > k - 5} 2^{k} \int_{s}^{\infty} 2^{j} v_{j} \epsilon (1 + s 2^{2k})^{-4} (1 + s' 2^{2j})^{-4} ds' \lesssim \epsilon v_{k} (1 + s 2^{2k})^{-4}.
\endaligned
\end{equation}
We have
\begin{align}
 |\text{\eqref{eqn:AL4bd1}, \eqref{eqn:AL4bd2}, \eqref{eqn:AL4bd4} }| & \lesssim \epsilon \int_s^\infty v_k \sum_{k' \geq k-5} 2^{k'} \alpha_{k'} (1 + 2^{2 k'} s')^{-4} \frac{1}{(s')^{\frac12}} d s' \\
 & \lesssim \epsilon v_k (1 + 2^{2 k'} s)^{-\frac72}.
\end{align}
Using $\| P_k \psi_x (s) \|_{L^4_{t,x}} \lesssim v_k$ and $\| P_k \psi_x (s) \|_{L^\infty_{t,x}} \lesssim 2^k$, we have
\begin{equation}
\| P_k \psi_x (s) \|_{L^8_{t,x}} \lesssim \sqrt{v_k} 2^{\frac{k}{2}}.
\end{equation}
We also have by \eqref{eqn:Aheat1} that
\begin{equation}
\| P_k A_x (s) \|_{L^\infty_{t,x}} \lesssim \epsilon (1 + s^{-\frac{8}{2}} 2^{-8k }) s^{-\frac{1}{2}}.
\end{equation}
Hence,
\begin{align}
 |\text{\eqref{eqn:AL4bd3}}| & \lesssim \epsilon \int_s^\infty (s')^{-\frac92}  2^{-7k} \left( 2^{-\frac{k}{2}} \sum_{k' \leq k} \sqrt{\alpha_{k'}} 2^{\frac{k'}{2}}  \right)^2  ds' \\
 & \lesssim \epsilon v_k (1 + 2^{2 k'} s)^{-\frac72}.
\end{align}

\end{proof}

We wish to compute
\begin{equation}
\| \psi_{x} \|_{L_{t,x}^{4}}^{4} \lesssim \sum_{k_{1} \leq k_{2} \leq k_{3} \leq k_{4}} \| (P_{k_{1}} \psi_{x})(P_{k_{4}} \psi_{x}) \|_{L_{t,x}^{2}} \| P_{k_{2}} \psi_{x} \|_{L_{t,x}^{4}} \| P_{k_{3}} \psi_{x} \|_{L_{t,x}^{4}}.
\end{equation}

We follow the approach that we developed in the (admittedly simpler) setting of the proof of Theorem \ref{t2.1}.  In particular, similar to \eqref{2.7},\eqref{2.8}, we make the bootstrap assumptions,
\begin{equation}
\label{eqn:bootstrap_assumptions}
\aligned
\sup_{|k_{1}- k_{4}|\geq10} 2^{\frac{|k_{1} - k_{4}|}{2}} \| (P_{k_{1}} \psi_{x})(P_{k_{4}} \psi_{x}) \|_{L_{t,x}^{2}} \lesssim \epsilon^{2}, \\
 \| P_{k} \psi_{x} \|_{L_{t,x}^{4}} \lesssim v_k E,
\endaligned
\end{equation}
where $E$ is the energy.

Then we have the estimate
\begin{equation}
\label{eq:L4norm}
\aligned
\| \psi_x \|_{L^4_{t,x}}^4 & \lesssim \sum_{k_{1} \leq k_{2} \leq k_{3} \leq k_{4}} \| (P_{k_{1}} \psi_{x})(P_{k_{4}} \psi_{x}) \|_{L_{t,x}^{2}} \| P_{k_{2}} \psi_{x} \|_{L_{t,x}^{4}} \| P_{k_{3}} \psi_{x} \|_{L_{t,x}^{4}} \\ & \lesssim \epsilon^{2} \sum_{k_{1} \leq k_{2} \leq k_{3} \leq k_{4}} 2^{\frac{k_{1} - k_{2}}{2}} 2^{\frac{k_{2} - k_{3}}{2}} 2^{\frac{k_{3} - k_{4}}{2}} \| P_{k_{2}} \psi_{x} \|_{L_{t,x}^{4}} \| P_{k_{3}} \psi_{x} \|_{L_{t,x}^{4}} \\
& \lesssim \epsilon^{2} \sum_{k_{2} \leq k_{3}} 2^{\frac{k_{2} - k_{3}}{2}} \| P_{k_{2}} \psi_{x} \|_{L_{t,x}^{4}} \| P_{k_{3}} \psi_{x} \|_{L_{t,x}^{4}} \lesssim \epsilon^{2} E.
 \endaligned
 \end{equation}

Then, using this implicit bound on the $L^4_{x,t}$ norm, following the proof of Theorem $6.3$ of \cite{dodson2012bilinear}, we can observe that
\begin{equation}
\label{eqn:psiL4epsbd}
\| \psi_x \|^2_{L^4_{t,x}} \lesssim  \epsilon \sqrt{E}.
 \end{equation}
Indeed, computing verbatim as in the proof of Lemma \ref{lem:lem5}, we get that for any $k\geq 0$, 
\begin{equation}\label{4.98}
\| \partial_x^k \psi_x (s) \|_{L^4_x} \lesssim_k s^{-k/2} \| \psi_x (0) \|_{L^4_x}
 \end{equation}
and
\begin{equation}\label{4.99}
\| \partial_x \psi_x (s) - \partial_{x} e^{s \Delta} \psi_{x}(0) \|^2_{L^2_s L^4_{t.x}}  \lesssim  \|\psi_x (0) \|_{L^4_{t,x}}^2 \lesssim \epsilon \sqrt{E}.
 \end{equation}
 \begin{remark}
 We can only say that
 \begin{equation}
 \| \partial_{x} e^{s \Delta} \psi_{x}(0) \|_{L_{s}^{2} L_{t, x}^{4}} \lesssim E,
 \end{equation}
 which comes from $(\ref{eqn:bootstrap_assumptions})$.
 \end{remark}

\begin{proof}[Proof of $(\ref{4.98})$ and $(\ref{4.99})$]
Again recall the formula for $\psi_{x}(s)$ from the harmonic map heat flow:
\begin{align}
	\psi_{x}(s) = e^{s \Delta} \psi_{x}(0) & + \int_{0}^{s} e^{(s - s') \Delta} \nabla \cdot (A \psi_{x}(s')) ds' + \int_{0}^{s} e^{(s - s') \Delta} (\nabla \cdot A) \psi_{x}(s') ds' \\
	&  + \int_{0}^{s} e^{(s - s') \Delta} (A_{x}^{2} + \psi_{x}^{2}) \psi_{x}(s') ds'. \notag
\end{align}
Again recall that $\| \nabla \cdot A \|_{L_{s}^{1} L_{x}^{\infty}} \lesssim_{E_{0}} 1$, 
\begin{equation}
	\int_{0}^{\infty} \| A_{x} \|_{L^{\infty}}^{2} ds \leq \sup_{s >0} s^{1/2} \| A_{x}(s) \|_{L^{\infty}} \cdot \int_{0}^{\infty} s^{-1/2} \| A_{x} \|_{L^{\infty}} ds \lesssim_{E_{0}} 1,
\end{equation}
and
\begin{equation}
	\int_{0}^{\infty} \| \psi_{x}(s) \|_{L^{\infty}}^{2} ds \lesssim_{E_{0}} 1.
\end{equation}
Therefore, we can partition $[0, \infty)$ into $\leq L(\eta)$ subintervals $I_{j}$ such that
\begin{equation}
	\int_{I_{j}} \| \psi_{x}(s) \|_{L^{\infty}}^{2} + \| A_{x}(s) \|_{L^{\infty}}^{2} + \| \nabla \cdot A \|_{L^{\infty}} ds \leq \eta,
\end{equation}
for some $0 < \eta \ll 1$ to be specified later. Here we label the intervals $I_{0} = [0, s_{1})$, $I_{1} = [s_{1}, s_{2})$, ..., $I_{L(\eta) - 1} = [s_{L(\eta) - 1}, \infty)$.\medskip

Now then, since the heat kernel is an $L^{1}$ function, for any $k \geq 0$,
\begin{equation}
	s^{k/2}  \| \partial_{x}^{k} e^{s \Delta} \psi_{x}(0) \|_{L_{x}^{4}} \lesssim_{k} \| \psi_{x}(0) \|_{L_{x}^{4}}.
\end{equation}
Next, using the fact that $\| e^{(s - s') \Delta} \|_{L^{4} \rightarrow L^{4}} \lesssim 1$,
\begin{equation}
	\aligned
	 \| \int_{0}^{s} e^{(s - s') \Delta} (A_{x}^{2} + \psi_{x}^{2}) \psi_{x}(s') ds' \|_{L_{x}^{4}} & \lesssim (\| A_{x} \|_{L_{s}^{2} L_{x}^{\infty}}^{2} + \| \psi_{x} \|_{L_{s}^{2} L_{x}^{\infty}}^{2}) (\sup_{s' > 0} \| \psi_{x}(s') \|_{L_{x}^{4}})  \\
	 &\lesssim \eta (\sup_{s' > 0} \| \psi_{x}(s') \|_{L_{x}^{4}}).
	\endaligned
\end{equation}
The computation is similar for $\int_{0}^{s} e^{(s - s') \Delta} (\nabla \cdot A) \psi_{x}(s') ds'$. This time we do not need the bound $\| (\nabla \cdot A) \|_{L^{4/3}} \lesssim_{E_{0}} s^{-1/4}$, but rather simply use the fact that $\| e^{(s - s') \Delta} \|_{L^{4} \rightarrow L^{4}}$ combined with $\| \nabla \cdot A \|_{L_{s}^{1} L_{x}^{\infty}} \leq \eta$ to prove
\begin{equation}
	\aligned
	 & \| \int_{0}^{\eta^{1/2} s} e^{(s - s') \Delta} (\nabla \cdot A) \psi_{x}(s') ds' \|_{L_{x}^{4}}
	\lesssim \eta \sup_{s > 0} \| \psi_{x}(s) \|_{L_{x}^{4}}.
	\endaligned
\end{equation}

Finally, let us turn to $\| \int_{0}^{s} e^{(s - s') \Delta} \nabla (A \psi_{x}(s')) ds' \|_{L_{t,x}^{4}}$. Since 
\[\| \nabla e^{(s - s') \Delta} \|_{L^{4} \rightarrow L^{4}} \lesssim \frac{1}{|s - s'|^{1/2}} \] 
and $\| A \|_{L_{s}^{2} L_{x}^{\infty}} \leq \eta^{1/2}$, using the fact that $\int_{0}^{(1 - \eta) s} \frac{1}{|s - s'|} ds' \lesssim \log(\frac{1}{\eta})$,
\begin{equation}
	\aligned
	\| \int_{0}^{(1 - \eta) s} e^{(s - s') \Delta} \nabla (A \psi_{x}) ds' \|_{L_{x}^{4}} \lesssim \eta^{1/2} \log(\frac{1}{\eta}) \sup_{s > 0} \| \psi_{x}(s) \|_{L_{t,x}^{4}}.
	\endaligned
\end{equation}
On the other hand, by $(4.53)$,
\begin{equation}
	\aligned
	 \| \int_{(1 - \eta) s}^{s} e^{(s - s') \Delta} \nabla (A \psi_{x}) ds' \|_{L_{x}^{4}}
	& \lesssim  \int_{(1 - \eta) s}^{s} \frac{1}{(s - s')^{1/2}} (s')^{-1/2}  \| \psi_{x}(s') \|_{L_{x}^{4}} ds'  \\
	& \lesssim  \eta^{1/2}  \cdot \sup_{s > 0} \| \psi_{x}(s) \|_{L_{x}^{4}}.
	\endaligned
\end{equation}
Putting this all together,
\begin{equation}
	\sup_{s \in I_{0}}  \| \psi_{x}(s) \|_{L_{x}^{4}} \lesssim \| \psi_{x}(0) \|_{L_{x}^{4}} + \eta^{1/3} \sup_{s \in I_{0}} \| \psi_{x}(s) \|_{L_{x}^{4}},
\end{equation}
which implies, for $\eta \ll 1$ sufficiently small, 
\begin{equation}
	\sup_{s \in I_{0}} \| \psi_{x}(s) \|_{L_{x}^{4}} \lesssim \| \psi_{x}(0) \|_{L_{x}^{4}}.
\end{equation}
Furthermore, observe that we could apply the same computations to $s > s_{1}$ if the interval over which the Duhamel term was computed was contained in $I_{0}$. Therefore, arguing by induction, we have proved
\begin{equation}
	\| \psi_{x}(s) \|_{L_{x}^{4}} \lesssim_{L(\eta)} \| \psi_{x}(0) \|_{L_{x}^{4}}.
\end{equation}
Since $L(\eta)$ depends only on $E_{0}$, the proof is complete for $k = 0$. \medskip

The estimates of terms with $\partial_{x}^{k}$ is identical. In this case, split the Duhamel integral into
\begin{equation}
	\int_{0}^{(1 - \eta)s} e^{(s - s') \Delta} F(s') ds' + \int_{(1 - \eta)s}^{s} e^{(s - s') \Delta} [(\nabla \cdot A) \psi_{x} + (A_{x}^{2} + \psi_{x}^{2}) \psi_{x}] ds' + \int_{(1 - \eta)s}^{s} e^{(s - s') \Delta} \nabla(A \psi_{x}) ds',
\end{equation}
where $F = \nabla (A \psi_{x}) + (\nabla \cdot A) \psi_{x} + (A_{x}^{2} + \psi_{x}^{2}) \psi_{x}$. Since 
\begin{equation}
	\| \partial_{x}^{k} e^{(s - s') \Delta} \|_{L^{q} \rightarrow L^{p}} \lesssim \frac{1}{(s - s')^{k/2 + \frac{1}{q} - \frac{1}{p}}} 
\end{equation} 
for $q \leq p$,
applying the above calculations for $k = 0$ to higher $k$'s gives
\begin{equation}
	\| \partial_{x}^{k} \int_{0}^{(1 - \eta) s} e^{(s - s') \Delta} F(s') ds' \|_{L_{x}^{4}} \lesssim_{E_{0}, k} s^{-k/2} \| \psi_{x}(0) \|_{L_{x}^{4}}.
\end{equation}
For $(1 - \eta)s < s' < s$, using the bound $\| \partial_{x} e^{(s - s') \Delta} \|_{L_{x}^{4} \rightarrow L_{x}^{4}} \lesssim \frac{1}{|s - s'|^{1/2}}$, for $k \geq 1$,
\begin{align}
\label{eqn:4pt116}
	& \| \partial_{x}^{k} \int_{(1 - \eta)s}^{s} e^{(s - s') \Delta} [(\nabla \cdot A) \psi_{x} + (A_{x}^{2} + \psi_{x}^{2}) \psi_{x}] ds' \|_{L_{x}^{4}} \lesssim \\
	& \hspace{1cm}   \| \int_{(1 - \eta)s}^{s} \frac{1}{|s - s'|^{1/2}} \| \partial_{x}^{k -1} [(\nabla \cdot A) \psi_{x} + (A_{x}^{2} + \psi_{x}^{2}) \psi_{x}] ds' \|_{L_{x}^{4}}. \notag
\end{align}
Using $(4.49)$ and $(4.53)$ up to $k -1$, $(4.50)$--$(4.52)$, $(4.54)$--$(4.56)$, and arguing by induction and using the product rule,
\begin{equation}
 \text{\eqref{eqn:4pt116}}	\lesssim_{k, E_{0}} \| \psi_{x}(0) \|_{L_{x}^{4}} \int_{(1 - \eta)s}^{s} \frac{1}{|s - s'|^{1/2}} (s')^{-(k - 1)/2} (s')^{-1} ds' \lesssim_{E_{0}, k} s^{-k/2} \| \psi_{x}(0) \|_{L_{x}^{4}}.
\end{equation}
Finally, for $\| \partial_{x}^{k + 1} \int_{s/2}^{s} e^{(s - s') \Delta} (A \psi_{x}) ds' \|_{L^{4}}$,
\begin{equation}
\label{eqn:4pt118}
	\aligned
	\| \partial_{x}^{k + 1} \int_{(1 - \eta)s}^{s} e^{(s - s') \Delta} (A \psi_{x}) ds' \|_{L_{x}^{4}} \lesssim_{k} \int_{(1 - \eta)s}^{s} \frac{1}{|s - s'|^{1/2}} \| \partial_{x}^{k} (A_{x} \psi_{x}) \|_{L_{x}^{4}} ds' \\ \lesssim_{k} \sum_{j = 1}^{k} \int_{(1 - \eta)s}^{s} \frac{1}{|s - s'|^{1/2}} \| \partial_{x}^{j} A_{x} \|_{L^{\infty}} \| \partial_{x}^{k - j} \psi_{x} \|_{L_{x}^{4}} ds' + \int_{(1 - \eta)s}^{s} \frac{1}{|s - s'|^{1/2}} \| A_{x} \|_{L_{x}^{\infty}} \| \partial_{x}^{k} \psi_{x}(s') \|_{L_{x}^{4}}
	\endaligned
\end{equation}
Arguing by induction up to $k - 1$,
\begin{equation}
\aligned
	\text{\eqref{eqn:4pt118}} \lesssim_{E_{0}, k} s^{-k/2} \| \psi_{x}(0) \|_{L_{x}^{4}} + \int_{(1 - \eta)s}^{s} \frac{1}{|s - s'|^{1/2}} (s')^{-1/2} \| \partial_{x}^{k} \psi_{x}(s') \|_{L_{x}^{4}} \\ \lesssim_{E_{0}, k} s^{-k/2} \| \psi_{x}(0) \|_{L_{x}^{4}} + s^{-k/2} \eta^{1/2} (\sup_{s > 0} s^{k/2} \| \partial_{x}^{k} \psi_{x}(s) \|_{L_{x}^{4}}).
\endaligned
\end{equation}
Iterating over a finite number of subintervals of $[0, \infty)$, where the number of subintervals depends on $E_{0}$ and $k$ proves $(\ref{4.98})$.\medskip

The Duhamel representation and $(\ref{4.98})$ gives $(\ref{4.99})$. First, by $(\ref{eqn:Aheat1})$--$(\ref{eqn:psiheat4})$,
\begin{equation}
\aligned
\| A_{x}^{2} + \psi_{x}^{2} + (\nabla \cdot A) \|_{L_{s}^{4/3} L_{x}^{4}} & \lesssim \| A_{x}^{2} + \psi_{x}^{2} + (\nabla \cdot A) \|_{L_{s}^{1} L_{x}^{\infty}}^{1/2} \| A_{x}^{2} + \psi_{x}^{2} + (\nabla \cdot A) \|_{L_{s}^{2} L_{x}^{2}}^{1/2} \\
& \lesssim_{E_{0}} 1,
\endaligned
\end{equation}
so by the Hardy--Littlewood Sobolev inequality and $\| \nabla e^{(s - s') \Delta} \|_{L^{2} \rightarrow L^{4}} \lesssim \frac{1}{|s - s'|^{3/4}}$,
\begin{equation}\label{4.120}
\aligned
 & \| \nabla \int_{0}^{s} e^{(s - s') \Delta} \nabla \cdot (A \psi_{x}(s')) ds'  + \nabla \int_{0}^{s} e^{(s - s') \Delta} (\nabla \cdot A) \psi_{x}(s') ds' \|_{L_{s}^{2} L_{x}^{4}}\\
& \hspace{2cm} \lesssim_{E_{0}} \sup_{s > 0} \| \psi_{x}(s) \|_{L^{4}}.
\endaligned
 \end{equation}
 Next, since $\| \nabla^{2} e^{(s - s') \Delta} \|_{L^{4} \rightarrow L^{4}} \lesssim \frac{1}{|s - s'|}$ and Young's inequality,
\begin{equation}
\aligned
\| \nabla \int_{0}^{s/2} e^{(s - s') \Delta} \nabla \cdot (A \psi_{x}(s')) ds' \|_{L_{s}^{2} L_{x}^{4}} & \lesssim \| A \|_{L_{s}^{2} L_{x}^{\infty}} \| \psi_{x}(s) \|_{L_{s}^{\infty} L_{x}^{4}} \\
& \lesssim_{E_{0}} \sup_{s > 0} \| \psi_{x}(s) \|_{L^{4}}.
\endaligned
\end{equation}
Meanwhile, by the product rule, $(\ref{4.98})$, and $(\ref{4.120})$,
\begin{equation}
\aligned
\| \nabla \int_{s/2}^{s} e^{(s - s') \Delta} \nabla \cdot (A \psi_{x}(s')) ds' \|_{L_{s}^{2} L_{x}^{4}} \lesssim_{E_{0}} \| \psi_{x} \|_{L_{s}^{\infty} L_{x}^{4}} + \| \int_{s/2}^{s} e^{(s - s') \Delta} A \nabla^{2} \psi_{x}(s') ds' \|_{L_{s}^{2} L_{x}^{4}} \\ 
\lesssim_{E_{0}} \| \psi_{x} \|_{L_{s}^{\infty} L_{x}^{4}} + \| \int_{s/2}^{s} \frac{1}{s'} \| A_{x}(s') \|_{L^{\infty}} (s' \| \partial_{x}^{2} \psi_{x}(s') \|_{L^{4}}) ds' \lesssim \| \psi_{x}(0) \|_{L_{x}^{4}}.
\endaligned
\end{equation}
\end{proof}

From Lemma \ref{lem:Adecay}, we have
\begin{equation}
\label{eqn:Abdassumps_alt}
\| A_x (s) \|_{L^4_{t,x}} \lesssim \sqrt{\epsilon}.
\end{equation}
We can then continue and establish that 
\begin{equation}\label{bound:A}
  \| P_k A_x (s) \|_{L^1_x} \lesssim 2^{-k}
 \end{equation}
using 
\begin{equation}
A_x(s) = - \int_s^\infty \Im (\overline{\psi}_x (\partial_\ell \psi_\ell + i A_\ell \psi_\ell))  (r) dr
 \end{equation}
and \eqref{eqn:Aheat1}-\eqref{eqn:psiheat4} as in Corollary $6.5$ and Theorem $6.6$ of \cite{dodson2012bilinear}.

Continuing, we can then establish that
\begin{equation}\label{bound:At}
\| \partial_x \psi_t (s) - \partial_{x} e^{s \Delta} \psi_{t}(0) \|_{L^2_s L^4_{t.x}}  \lesssim_{E}  \sqrt{\epsilon}, \qquad \text{for any } k \geq 1, \qquad \ \ \| \partial_{x}^{k} \psi_t (s) \|_{L^4_{t,x}} \lesssim_{E, k} s^{-k/2} \sqrt{\epsilon},
 \end{equation}
 by applying \eqref{eqn:psiL4epsbd}, the Harmonic Map Flow formula for $\psi_t(s)$. Also by applying the caloric gauge formula for $A_t$ as in Lemma $6.7$, Corollary $6.8$ of \cite{dodson2012bilinear},
 \begin{equation}\label{bound:At12}
 A_{t} = A_{t}^{(1)} + A_{t}^{(2)}, \qquad A_{t}^{(1)} = \int_{s}^{\infty} |e^{r \Delta} \nabla \cdot \psi_{x}|_{s = 0}|^{2} dr, \qquad \| A_{t}^{(2)} \|_{L_{t,x}^{2}} \lesssim_{E} \sqrt{\epsilon},
 \end{equation}
 and for any $k \geq 1$,
 \begin{equation}\label{bound:At1}
 \| \partial_{x}^{k} A_{t} \|_{L_{t,x}^{2}} \lesssim_{E, k} s^{-k/2} \sqrt{\epsilon}.
 \end{equation}

Similarly, we get 
\begin{equation}\label{bound:Ax}
\| P_k A_x (s) \|_{L^2_{t,x}} \lesssim 2^{-k} \sqrt{\epsilon} \alpha_k
\end{equation}
by computing as in Corollary $6.10$ in \cite{dodson2012bilinear} using Theorem $6.9$ of that work which follows verbatim as it is a frequency envelope bound.

Since $(\ref{eqn:Abdassumps_alt})$-\eqref{bound:A} implies 
\begin{equation}
\label{eqn:combdA}
\| [P_{k}, P_{\leq k - 5} A] \|_{L_{t,x}^{2}} \lesssim C(E) 2^{-k} \epsilon,
\end{equation}
we have
\begin{equation}\label{4.89}
\| P_{l} \psi_{x} \|_{L_{t}^{\infty} L_{x}^{2}} \| P_{k} \psi_{x} \|_{L_{t,x}^{4}} \| \tilde{P}_{k} \nabla \psi_{x} \|_{L_{t,x}^{4}} \| [P_{k}, P_{\leq k - 5} A] \|_{L_{t,x}^{2}} \lesssim C(E) \epsilon^{3}.
\end{equation}
Next, integrating by parts, using Hardy's inequality,
\begin{equation}\label{4.90}
\aligned
-2 \int |P_{j} \psi_{y}(t, y)|^{2} \frac{(x - y)}{|x - y|} \cdot Re[(\sum_{l = 1}^{d} \overline{(P_{\leq k - 5} A_{l}) \partial_{l} P_{k} \psi_{x}}) \nabla P_{k} \psi_{x}]  \\
- 2 \int |P_{j} \psi_{y}(t, y)|^{2} \frac{(x - y)}{|x - y|} \cdot Re[\overline{P_{k} \psi_{x}} \nabla (\sum_{l = 1}^{d} (P_{\leq k - 5} A_{l}) \partial_{l} P_{k} \psi_{x})] \\
= -2 \int |P_{j} \psi_{y}(t, y)|^{2} \frac{(x - y)}{|x - y|} \cdot Re[\overline{P_{k} \psi_{x}}  (\sum_{l = 1}^{d} (P_{\leq k - 5} \nabla A_{l}) \partial_{l} P_{k} \psi_{x})] \\
+ 2 \int |P_{j} \psi_{y}(t, y)|^{2} \frac{(x - y)}{|x - y|} \cdot Re[\overline{P_{k} \psi_{x}}  (\sum_{l = 1}^{d} (\partial_{l} P_{\leq k - 5} A_{l}) \nabla P_{k} \psi_{x})] \\
+ \int |P_{j} \psi_{x}|^{2} \frac{1}{|x - y|} |P_{k} \psi_{x}| |\nabla P_{k} \psi_{x}| |P_{\leq k - 5} A_{x}| \\
\lesssim 2^{k} \| P_{j} \psi \|_{L_{t}^{\infty} L_{x}^{2}}^{2} \| P_{k} \psi_{x} \|_{L_{t,x}^{4}}^{2} \| \nabla A \|_{L_{t,x}^{2}} \lesssim C(E) \epsilon^{\frac72}.
\endaligned
\end{equation}

\section{Two dimensional Schr{\"o}dinger map flow}
\label{sec:SchMap}

\begin{remark}
In this section, all implicit constants $\lesssim$ refer to $\lesssim_{E}$. Recall that we are also using the bootstrap condition $(\ref{8.13})$--$(\ref{8.13.2})$.
\end{remark}

\subsection{Proof of Lemma $\ref{lmass}$} We are ready to prove Lemma $\ref{lmass}$.
\begin{proof}[Proof of Lemma $\ref{lmass}$]
Computing the change of mass,
\begin{equation}\label{5.1}
\frac{d}{dt} \frac{1}{2} (P_{k} \psi_{m}, P_{k} \psi_{m})_{L^{2}} = (-2i P_{k}(\sum_{l = 1}^{d} A_{l} \partial_{l} \psi_{m}), P_{k} \psi_{m})_{L^{2}}
\end{equation}
\begin{equation}\label{5.2}
+ (P_{k}(A_{t} + \sum_{l = 1}^{d} (A_{l}^{2} - i \partial_{l} A_{l})) \psi_{m}, P_{k} \psi_{m})_{L^{2}} - (i P_{k} \sum_{l = 1}^{d} \psi_{l} Im(\bar{\psi}_{l} \psi_{m}), P_{k} \psi_{m})_{L^{2}}.
\end{equation}

Now then, by $(\ref{8.13})$--$(\ref{8.13.2})$,
\begin{equation}\label{5.3}
\aligned
& \int - (i P_{k} \sum_{l = 1}^{d} \psi_{l} Im(\bar{\psi}_{l} \psi_{m}), P_{k} \psi_{m})_{L^{2}} dt \\ 
& \lesssim \sum_{|j_{3} - k| \leq |j_{2} - k| \leq |j_{1} - k|} \| (P_{k} \psi_{x})(P_{j_{1}} \psi_{x}) \|_{L_{t,x}^{2}} \| P_{j_{2}} \psi_{x} \|_{L_{t,x}^{4}} \| P_{j_{3}} \psi_{x} \|_{L_{t,x}^{4}} \lesssim C \epsilon^{2} v_{k}^{2}.
\endaligned
\end{equation}
Next,
\begin{equation}\label{5.4}
\aligned
 & \| (P_{k} \psi_{x}) P_{k}(A_{l}^{2} \psi_{x}) \|_{L_{t,x}^{1}} +  \| (P_{k} \psi_{x}) P_{k}(\nabla \cdot A \psi_{x}) \|_{L_{t,x}^{1}}
+  \| (P_{k} \psi_{x}) P_{k}(A_{t} \psi_{x}) \|_{L_{t,x}^{1}} \\
& \hspace{1cm} \lesssim C \epsilon v_{k}^{2}.
\endaligned
\end{equation}
Indeed, by $(\ref{eqn:Abdassumps_alt})$-\eqref{bound:A} and $(\ref{eqn:bootstrap_assumptions})$,
\begin{equation}\label{5.5}
 \| (P_{k} \psi_{x}) P_{k}(A_{l}^{2} \psi_{x}) \|_{L_{t,x}^{1}} \lesssim \| (P_{k} \psi_{x}) \psi_{x} \|_{L_{t,x}^{2}} \| A_{x} \|_{L_{t,x}^{4}}^{2} \lesssim C v_{k}^{2} \epsilon,
\end{equation}
\begin{equation}\label{5.6}
 \| (P_{k} \psi_{x}) P_{k}(\nabla \cdot A \psi_{x}) \|_{L_{t,x}^{1}} \lesssim \| (P_{k} \psi_{x}) \psi_{x} \|_{L_{t,x}^{2}} \| \nabla \cdot A_{x} \|_{L_{t,x}^{2}} \lesssim C v_{k}^{2} \epsilon,
\end{equation}
\begin{equation}\label{5.7}
\| (P_{k} \psi_{x}) P_{k}(A_{t}^{(2)} \psi_{x}) \|_{L_{t,x}^{1}} \lesssim  \| (P_{k} \psi_{x}) \psi_{x} \|_{L_{t,x}^{2}} \| A_{t} \|_{L_{t,x}^{2}} \lesssim C v_{k}^{2} \epsilon,
\end{equation}
for $A_{t}^{(2)}$ as defined in \eqref{bound:At12}.  Following $(\ref{5.3})$ and using 
$$\| (e^{r \Delta} \nabla \cdot P_{k_{1}} \psi_{x}|_{s = 0})(P_{k} \psi_{x}) \|_{L_{t,x}^{2}} \lesssim 2^{-r 2^{2k_{1}}} 2^{k_{1}} 2^{-\frac{|k - k_{1}|}{2}} v_{k} v_{k_{1}}$$ 
and therefore $\| (e^{r \Delta} \nabla \cdot P_{k_{1}} \psi_{x}|_{s = 0})(P_{k} \psi_{x}) \|_{L_{r, t,x}^{2}} \lesssim  2^{-\frac{|k - k_{1}|}{2}} v_{k} v_{k_{1}}$,
\begin{equation}\label{5.7.1}
\| (P_{k} \psi_{x}) A_{t}^{(1)} \psi_{x} \|_{L_{t,x}^{1}} = \| (P_{k} \psi_{x}) (\int_{0}^{\infty} |e^{r \Delta} \nabla \cdot \psi_{x}|_{s = 0}|^{2} dr) \psi_{x} \|_{L_{t,x}^{1}} \lesssim C \epsilon^{2} v_{k}^{2}
\end{equation}
for $A_{t}^{(1)}$ as defined in \eqref{bound:At12}.

Now split
\begin{equation}\label{5.8}
A = P_{\leq k - 5} A + P_{> k - 5} A.
\end{equation}
By $(\ref{eqn:Abdassumps_alt})$-\eqref{bound:A}, computing $(-2i P_{k}(\sum_{l = 1}^{d} A_{l} \partial_{l} \psi_{m}), P_{k} \psi_{m})_{L^{2}}$ with $A$ replaced by $P_{> k - 5} A$, the contribution is bounded by
\begin{equation}\label{5.9}
\sum_{l \geq k - 5} \| P_{l} A_{x} \|_{L_{t,x}^{2}} \| (\nabla P_{\leq l + 10} \psi_{x})(P_{k} \psi_{x}) \|_{L_{t,x}^{2}} \lesssim \epsilon v_{k}^{2}.
\end{equation}
Next, integrating by parts,
\begin{equation}\label{5.9alt}
\aligned
 Re((\sum_{l = 1}^{d} \overline{(P_{\leq k - 5} A_{l}) \partial_{l} P_{k} \psi_{x}}), P_{k} \psi_{x})_{L^{2}} \lesssim \| \nabla A \|_{L_{t,x}^{2}} \| P_{k} \psi_{x} \|_{L_{t,x}^{4}}^{2} \lesssim \epsilon v_{k}^{2}.
\endaligned
\end{equation}
Finally, computing the commutator, if $\tilde{P}_{k} = P_{k - 2} + P_{k - 1} + P_{k} + P_{k + 1} + P_{k + 2}$,
\begin{equation}\label{5.10}
P_{k} ((P_{\leq k - 5} A) \psi_{x}) = P_{k}((P_{\leq k - 5} A) \tilde{P}_{k} \psi_{x}) = (P_{\leq k - 5} A) P_{k} \psi_{x} + [P_{k}, P_{\leq k - 5} A] (\tilde{P}_{k} \psi_{x}).
\end{equation}
Since $(\ref{eqn:Abdassumps_alt})$-\eqref{bound:A} implies $\| [P_{k}, P_{\leq k - 5} A] \|_{L_{t,x}^{2}} \lesssim 2^{-k} \epsilon$,
\begin{equation}\label{5.11}
\| P_{k} \psi_{x} \|_{L_{t,x}^{4}} \| \tilde{P}_{k} \nabla \psi_{x} \|_{L_{t,x}^{4}} \| [P_{k}, P_{\leq k - 5} A] \|_{L_{t,x}^{2}} \lesssim \epsilon v_{k}^{2}.
\end{equation}
\begin{remark}
The same results hold for $v_{k}$ replaced by $2^{-k \sigma} v_{k}(\sigma)$.
\end{remark}

\end{proof}

\subsection{Bilinear estimate : Proof of Lemma $\ref{lbilinear}$}

In this section we wish to prove the following lemma.
\begin{lemma}\label{l7.1}
For any $s, s' \geq 0$, $l \leq k - 10$,
\begin{equation}\label{7.1}
\| (P_{k} \psi_{x}(s))(P_{l} \psi_{x}(s')) \|_{L_{t,x}^{2}}^{2} \lesssim 2^{l - k} v_k^{2} v_l^{2} (1 + s 2^{2k})^{-3} (1 + s' 2^{2l})^{-3}.
\end{equation}
\end{lemma}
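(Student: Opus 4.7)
The plan is to prove Lemma \ref{l7.1} by applying the interaction Morawetz identity developed in Section \ref{sec:warmup} to $u = P_k \psi_x(s)$ and $v = P_l \psi_x(s')$ regarded as solutions to the gauged Schrödinger map equation $(\ref{4.9})$. Each factor satisfies $(i\partial_t + \Delta) u = P_k F$ with $F = -2i \sum_\ell A_\ell \partial_\ell \psi_m + (A_t + \sum_\ell A_\ell^2 - i \sum_\ell \partial_\ell A_\ell) \psi_m - i \sum_\ell \psi_\ell \operatorname{Im}(\overline{\psi_\ell} \psi_m)$. Plugging into $(\ref{2.22})$, I get schematically
\begin{equation*}
\| (P_k \psi_x(s))(P_l \psi_x(s')) \|_{L_{t,x}^{2}}^{2} \lesssim 2^{l-k} \sup_t \| P_k \psi_x(s,t) \|_{L^{2}}^{2} \| P_l \psi_x(s',t) \|_{L^{2}}^{2} + \mathcal{E},
\end{equation*}
where $\mathcal{E}$ collects the six forcing contributions from $(\ref{2.22})$.

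For the boundary term I would combine Lemma \ref{lmass} with the linear heat-flow decay $(\ref{eqn:linheatbd1})$ extended to the full nonlinear harmonic map heat flow via the Duhamel bootstrap of Section \ref{sec:HMHF}. This yields $\| P_k \psi_x(s,t) \|_{L_t^\infty L_x^2} \lesssim v_k (1 + s 2^{2k})^{-4}$ so that the boundary contribution is $2^{l-k} v_k^2 v_l^2 (1 + s 2^{2k})^{-8} (1 + s' 2^{2l})^{-8}$, comfortably absorbing the required $(\cdot)^{-3}$ weights.

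For the forcing $\mathcal{E}$, I would dispose first of the cubic piece $\psi_\ell \operatorname{Im}(\overline{\psi_\ell}\psi_m)$ exactly as in $(\ref{2.23})$--$(\ref{2.25})$: dyadically decompose the three $\psi$ factors by frequency, pair the extreme two via the bilinear bootstrap $(\ref{eqn:bootstrap_assumptions})$, and place the middle factor in $L_{t,x}^4$ using $(\ref{eq:L4norm})$. For the potential gauge pieces $(A_\ell^2 - i \partial_\ell A_\ell + A_t)\psi$ I would use the $L_{t,x}^4$ and $L_{t,x}^2$ bounds $(\ref{eqn:Abdassumps_alt})$--$(\ref{bound:Ax})$ on $A_x$, together with the splitting $A_t = A_t^{(1)} + A_t^{(2)}$ from $(\ref{bound:At12})$, handling $A_t^{(2)}$ directly and exploiting the bilinear-in-$\psi$ structure of $A_t^{(1)}$ as in $(\ref{5.7.1})$. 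The heat-flow decay factors $(1 + s 2^{2k})^{-N}$, $(1 + s' 2^{2l})^{-N}$ are propagated through each estimate because every building block ($A_x$ via Lemma \ref{lem:Adecay}, $A_t$, and $\psi_x$) already carries such decay, and I only need $N = 3$.

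The main obstacle is the quasilinear transport term $A_\ell \partial_\ell \psi_m$ on the high-frequency factor, where the derivative on $P_k \psi_x$ costs $2^{k}$ that must be recovered from $A$. I would mimic the strategy of $(\ref{5.8})$--$(\ref{5.11})$: split $A = P_{\leq k-5} A + P_{> k-5} A$, control the high-frequency piece via $(\ref{bound:Ax})$ paired with $(\ref{eqn:bootstrap_assumptions})$, and for the low-frequency piece commute $P_{\leq k-5} A$ past $P_k$ using the commutator bound $\| [P_k, P_{\leq k-5} A] \|_{L_{t,x}^{2}} \lesssim 2^{-k} \epsilon$ from $(\ref{eqn:combdA})$, then integrate by parts to move the gradient onto $A$, in direct analogy with $(\ref{5.9alt})$ and $(\ref{4.90})$. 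The bookkeeping — matching $s' 2^{2l}$ decay to the low-frequency factor $v$, $s 2^{2k}$ decay to the high-frequency factor $u$, and confirming that each error sub-contribution is bounded by $2^{l-k} v_k^2 v_l^2$ times at least $(1 + s 2^{2k})^{-3} (1 + s' 2^{2l})^{-3}$ with an additional $\epsilon$ of smallness — is the tedious but essentially mechanical step that forces the weaker exponent $3$ (rather than $8$ as for the boundary term) in the final statement.
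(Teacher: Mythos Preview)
Your proposal has a genuine structural gap. You write that each factor $P_k\psi_x(s)$ satisfies $(i\partial_t+\Delta)u=P_kF$ with $F$ given by the right-hand side of $(\ref{4.9})$, but equation $(\ref{4.9})$ is the Schr\"odinger map equation in gauge form and holds only at heat time $s=0$. For $s>0$ the field $\psi_x(s)$ is obtained by pushing $\psi_x(0)$ forward along the harmonic map heat flow, and its $t$-evolution is no longer governed by $(\ref{4.9})$; in particular $\psi_t(s)\neq iD_\ell\psi_\ell(s)$. Consequently your claim that ``every building block already carries such decay'' in $F$ evaluated at $s$ is addressing the wrong forcing term, and the Morawetz error $\mathcal E$ you describe is not the one that actually arises.

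The paper handles this by first rewriting the forcing via the compatibility condition $D_t\psi_x=D_x\psi_t$, obtaining $(\partial_t-i\Delta)\psi_x(s)=D_x\psi_t(s)-iA_t(s)\psi_x(s)-i\Delta\psi_x(s)$, see $(\ref{7.4})$. It then decomposes $\psi_t(s)=e^{s\Delta}\psi_t|_{s=0}+(\psi_t(s)-e^{s\Delta}\psi_t|_{s=0})$ in $(\ref{7.7})$, pulls the heat propagator through $D_x$ via the commutator $[D_x,e^{s\Delta}]$ in $(\ref{7.10})$, and only at this stage recovers the Schr\"odinger-map nonlinearity $(\ref{7.11})$ acting on the data at $s=0$. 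What remains is a second block $F_2^{(2)}=\partial_x(\psi_t-e^{s\Delta}\psi_t|_{s=0})-i\Delta(\psi_x-e^{s\Delta}\psi_x|_{s=0})$, see $(\ref{7.27})$, which is controlled through the heat-flow Duhamel formula in $(\ref{7.28})$--$(\ref{7.40})$ using Lemma~5.4 and 5.6 of \cite{dodsonsmith}. It is this two-step reduction to $s=0$, together with the heat-kernel commutator estimates $(\ref{7.19})$--$(\ref{7.22})$, that produces the $(1+s2^{2k})^{-3}$ weights; your outline does not contain a substitute for it. The pieces you do describe --- the cubic term, the $A_t^{(1)}/A_t^{(2)}$ splitting, the $P_{\le k-5}A$ commutator, and the integration by parts $(\ref{4.90})$ --- are all correct and indeed appear in the paper, but they operate on the $s=0$ nonlinearity after the above reduction, not on an equation $(\ref{4.9})$ posed at heat time $s$.
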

\begin{proof}
Recall from $(\ref{8.13})$--$(\ref{8.13.2})$, we have that this bound holds modulo a constant $C > 1$ from our 
\begin{equation}\label{7.2}
\| (P_{k} \psi_{x}(s))(P_{l} \psi_{x}(s')) \|_{L_{t,x}^{2}}^{2} \lesssim C 2^{l - k} v_k^{2} v_l^{2} (1 + s 2^{2k})^{-3} (1 + s' 2^{2l})^{-3}
\end{equation}
where using the $\lesssim$ sign again means that we will show that we can bootstrap up to a uniform constant on the right hand side building upon the bootstrapping bound where we have a larger constant bound.

Following Theorem \ref{t3.1}, let
\begin{equation}\label{7.3}
\aligned
F_{1} = (\partial_{t} - i \Delta) \psi_{x}(t, s'), \qquad F_{2} = (\partial_{t} - i \Delta) \psi_{x}(t, s).
\endaligned
\end{equation}
To compute $(\ref{7.3})$, note that by compatibility condition $D_t \psi_x = D_x \psi_t$, we have
\begin{equation}\label{7.4}
\aligned
\partial_{t} \psi_{x}(t, s) - i \Delta \psi_{x}(t, s) = D_{t} \psi_{x}(t, s) - i A_{t} \psi_{x}(t, s) - i \Delta \psi_{x}(t, s) = D_{x} \psi_{t} - i A_{t} \psi_{x} - i \Delta \psi_{x}.
\endaligned
\end{equation}
To control the contribution of $i A_{t} \psi_{x}$, it suffices to estimate the following term via the bootstrap assumption $(\ref{7.2})$.  For $A_{t}^{(2)}$ defined as in \eqref{bound:At12}, we have
\begin{equation}\label{7.5}
\aligned
 \| (P_{k} \psi_{x}(s)) P_{k}(A_{t}^{(2)} \psi_{x}(s)) \|_{L_{t,x}^{1}} & \lesssim \| P_{k} \psi_{x}(s) \|_{L_{t,x}^{4}} \| P_{\leq k - 5} \psi_{x}(s) \|_{L_{t,x}^{4}} \| P_{k - 5 \leq \cdot \leq k + 5} A_{t}^{(2)} \|_{L_{t,x}^{2}} \\
& + \| P_{k} \psi_{x}(s) \|_{L_{t,x}^{4}} \| P_{k - 5 \leq \cdot \leq k + 5} \psi_{x}(s) \|_{L_{t,x}^{4}} \| A_{t}^{(2)} \|_{L_{t,x}^{2}} \\
& + \| P_{k} \psi_{x}(s) \|_{L_{t,x}^{4}} \sum_{l \geq k + 5} \| P_{l} \psi_{x}(s) \|_{L_{t,x}^{4}} \| P_{l} A_{t}^{(2)}(s) \|_{L_{t,x}^{2}} \\
& \lesssim C \epsilon v_k^{2} (1 + s 2^{2k})^{-6}.
\endaligned
\end{equation}
Now by $(\ref{bound:At})$,
\begin{equation}
\aligned
& \| P_{k} \psi_{x}(s) \|_{L_{t,x}^{4}} \| P_{k - 5 \leq \cdot \leq k + 5} \psi_{x}(s) \|_{L_{t,x}^{4}} \| A_{t}^{(2)} \|_{L_{t,x}^{2}} \\ 
&+ \| P_{k} \psi_{x}(s) \|_{L_{t,x}^{4}} \sum_{l \geq k + 5} \| P_{l} \psi_{x}(s) \|_{L_{t,x}^{4}} \| P_{l} A_{t}^{(2)}(s) \|_{L_{t,x}^{2}} \\
& \lesssim C \epsilon v_k^{2} (1 + s 2^{2k})^{-6}.
\endaligned
\end{equation}
From \eqref{eqn:combdA}, \eqref{bound:At1}, and \eqref{4.76}, we have
\begin{equation}\label{7.5.1}
\aligned
  \| P_{k} \psi_{x}(s) \|_{L_{t,x}^{4}} & \| P_{\leq k - 5} \psi_{x}(s) \|_{L_{t,x}^{4}} \| P_{k - 5 \leq \cdot \leq k + 5} A_{t}^{(2)} \|_{L_{t,x}^{2}} \\
 &  \lesssim C \sqrt{\epsilon} v_k^{2} (1 + s 2^{2k})^{-6}.
 \endaligned
 \end{equation}
 Now then, from $(\ref{8.13})$--$(\ref{8.13.2})$ and following the computation in $(\ref{5.3})$, for $A_{t}^{(1)}$ defined as in \eqref{bound:At12} we have
 \begin{equation}\label{7.5.2}
 \aligned
\| (P_{k} \psi_{x}(s)) A_{t}^{(1)} \psi_{x}(s) \|_{L_{t,x}^{1}} \lesssim  \| (P_{k} \psi_{x}(s)) (\int_{s}^{\infty} |e^{r \Delta} \nabla \cdot \psi_{x}|_{s = 0}|^{2} dr) \psi_{x}(s) \|_{L_{t,x}^{1}} \\ \lesssim \sum_{k_{1}, k_{2} \leq k - 5} \int_{s}^{\infty} \| (P_{k} \psi_{x}(s))(e^{r \Delta} P_{k_{1}} \nabla \cdot \psi_{s}|_{s = 0}) \|_{L_{t,x}^{2}} \| (P_{k - 5 \leq \cdot \leq k + 5} \psi_{x}(s))(e^{r \Delta}P_{k_{2}} \psi_{x}|_{s = 0}) \|_{L_{t,x}^{2}} dr \\
+ \sum_{\max \{ k_{1}, k_{2} \} \geq k - 5} \int_{s}^{\infty} \| (P_{k} \psi_{x}(s)) \psi_{x}(s) \cdot (e^{r \Delta} P_{k_{1}} \nabla \cdot \psi_{x}|_{s = 0})(e^{r \Delta} P_{k_{2}} \nabla \cdot \psi_{x}|_{s = 0}) \|_{L_{t,x}^{1}} dr \lesssim C \epsilon^{2} v_{k}^{2}.
 \endaligned
 \end{equation}
 
\medskip

Therefore,
\begin{equation}\label{7.6}
\aligned
&-2^{l - 2k} \int |P_{l} \psi_{x}(s')|^{2} \frac{(x - y)}{|x - y|} Im[(\overline{P_{k} \psi_{x}(s)}) \nabla P_{k}(i A_{t} \psi_{x}(s))] \\
&-2^{l - 2k} \int |P_{l} \psi_{x}(s')|^{2} \frac{(x - y)}{|x - y|} Im[P_{k}(\overline{i A_{t} \psi_{x}} \nabla P_{k} \psi_{x}(s)] \\
&-2^{l - 2k + 1} \int Im[P_{l}(\overline{\psi_{x}}) P_{l}(i A_{t} \psi_{x})] \frac{(x - y)}{|x - y|} \cdot Im[(\overline{P_{k} \psi_{x})} \nabla (P_{k} \psi_{x}(s))] \\
& \lesssim 2^{l - k} C  \sqrt{\epsilon}  v_k^{2} v_l^{2}.
\endaligned
\end{equation}

Next, expand
\begin{equation}\label{7.7}
\aligned
& D_{x} \psi_{t} - i \Delta \psi_{x} = D_{x}(\psi_{t}(s) - e^{s \Delta} \psi_{t}|_{s = 0}) + D_{x}(e^{s \Delta} \psi_{t}|_{s = 0}) - i \Delta \psi_{x} \\ 
& = \partial_{x}(\psi_{t}(s) - e^{s \Delta} \psi_{t}|_{s = 0}) + D_{x}(e^{s \Delta} \psi_{t}|_{s = 0}) - i \Delta \psi_{x} + i A_{x}(\psi_{t}(s) - e^{s \Delta} \psi_{t}|_{s = 0}).
\endaligned
\end{equation}
By Lemma $5.4$ from \cite{dodsonsmith},
\begin{equation}
\| P_{k}(\psi_{t}(s) - e^{s \Delta} \psi_{t}|_{s = 0}) \|_{L_{t,x}^{2}} \lesssim v_{k} (1 + s 2^{2k})^{-3},
\end{equation}
so applying $(\ref{4.76})$, $(\ref{4.77})$, $(\ref{bound:A})$, \eqref{bound:At1} and $(\ref{bound:Ax})$ we have
\begin{equation}\label{7.8}
\aligned
& \| (P_{k} \psi_{x}(s)) P_{k}(i A_{x}(\psi_{t}(s) - e^{s \Delta} \psi_{t}|_{s = 0})) \|_{L_{t,x}^{1}} \\ 
& \lesssim \| P_{k} \psi_{x}(s) \|_{L_{t,x}^{4}} \| P_{\leq k - 5} A_{x} \|_{L_{t,x}^{4}} \sum_{k - 5 \leq l \leq k + 5} \| P_{l}(\psi_{t}(s) - e^{s \Delta} \psi_{t}|_{s = 0}) \|_{L_{t,x}^{2}} \\
& + \| P_{k} \psi_{x}(s) \|_{L_{t, x}^{4}} \| P_{k - 5 \leq \cdot \leq k + 5} A_{x} \|_{L_{t,x}^{2}} \sum_{l \leq k - 5} \| P_{l}(\psi_{t}(s) - e^{s \Delta} \psi_{t}|_{s = 0}) \|_{L_{t,x}^{4}} \\
& + 2^{k} \| P_{k} \psi_{x}(s) \|_{L_{t}^{\infty} L_{x}^{2}} \sum_{l > k + 5} \| P_{l} A_{x} \|_{L_{t,x}^{2}} \| P_{l}(\psi_{t}(s) - e^{s \Delta} \psi_{t}|_{s = 0}) \|_{L_{t,x}^{2}} \\
& \lesssim \sqrt{\epsilon} v_k^{2} (1 + s 2^{2k})^{-8}.
\endaligned
\end{equation}
\begin{remark}
Note that here we do not use the bootstrap assumption $(\ref{7.2})$ and we obtain decay in $s$.
\end{remark}
Plugging $(\ref{7.8})$ and $(\ref{4.76})$ into $(\ref{7.6})$, the contribution of $A_{x}(\psi_{t}(s) - e^{s \Delta} \psi_{t}|_{s = 0})$ is bounded by
\begin{equation}\label{7.9}
\sqrt{\epsilon} 2^{l - k} v_k^{2} v_l^{2} (1 + s 2^{2k})^{-8} (1 + s' 2^{2l})^{-8},
\end{equation}
which is acceptable for our purposes.

Next,
\begin{equation}\label{7.10}
D_{x}(e^{s \Delta} \psi_{t}|_{s = 0}) = e^{s \Delta} D_{x} \psi_{t}|_{s = 0} + [D_{x}, e^{s \Delta}] \psi_{t}|_{s = 0}.
\end{equation}
When $s = 0$,
\begin{equation}\label{7.11}
\aligned
& D_{x} \psi_{t}|_{s = 0} = i D_{x} D_{l} \psi_{l}|_{s = 0} = i D_{l} D_{l} \psi_{x}|_{s = 0} + i [D_{x}, D_{l}] \psi_{l}|_{s = 0} \\
& = (i \Delta \psi_{x} - 2A \cdot \nabla \psi_{x} - (\nabla \cdot A) \psi_{x} - i \sum_{l} A_{l}^{2} \psi_{x} - \sum_{l} \psi_{l} Im(\bar{\psi}_{l} \psi_{x}))|_{s = 0}.
\endaligned
\end{equation}
Now then, by $(\ref{7.2})$, $(\ref{eqn:psiL4epsbd})$, $(\ref{bound:A})$, and $(\ref{bound:Ax})$,
\begin{equation}\label{7.12}
\aligned
\| P_{k}(((\nabla \cdot A) \psi_{x} - i \sum_{l} A_{l}^{2} \psi_{x} - \sum_{l} \psi_{l} Im(\bar{\psi}_{l} \psi_{x}))|_{s = 0}) (P_{k} \psi_{x}(s)) \|_{L_{t,x}^{1}}  \\ \lesssim \| (\nabla \cdot A) + \sum_{l} A_{l}^{2} + |\psi_{x}|^{2} \|_{L_{t,x}^{2}} \| (P_{k} \psi_{x}(s)) \psi_{x} \|_{L_{t,x}^{2}} \lesssim C \sqrt{\epsilon} v_k^{2}.
\endaligned
\end{equation}
Therefore, examining the kernel of $e^{s \Delta} P_{k}$,
\begin{equation}\label{7.13}
\| P_{k}(e^{s \Delta} ((\nabla \cdot A) \psi_{x} - i \sum_{l} A_{l}^{2} \psi_{x} - \sum_{l} \psi_{l} Im(\bar{\psi}_{l} \psi_{x}))|_{s = 0}) (P_{k} \psi_{x}(s)) \|_{L_{t,x}^{1}} \lesssim C \sqrt{\epsilon} v_k^{2} (1 + s 2^{2k})^{-8}.
\end{equation}
The term $e^{s \Delta}(i \Delta \psi_{x}|_{s = 0})$ will be handled along with the term $-i \Delta \psi_{x}(s)$, so for now consider the term $e^{s \Delta}(-2 A \cdot \nabla \psi_{x}|_{s = 0})$. Similar to the computations in Section $5.1$, by $(\ref{bound:Ax})$,
\begin{equation}\label{7.14}
\aligned
& \| P_{k}(A_{\geq k - 5} \cdot \nabla \psi_{x}|_{s = 0})(P_{k} \psi_{x}(s)) \|_{L_{t,x}^{1}} \\ 
&\lesssim 2^{k} \| A_{\geq k - 5} \|_{L_{t,x}^{2}} \| (P_{k} \psi_{x}) \psi_{x} \|_{L_{t,x}^{2}} + \| \nabla \cdot A_{\geq k - 5} \|_{L_{t,x}^{2}} \| (P_{k} \psi_{x}) \psi_{x} \|_{L_{t,x}^{2}} \\
&\lesssim C \sqrt{\epsilon} v_k^{2}.
\endaligned
\end{equation}
Again by examining the kernel of $e^{s \Delta} P_{k}$,
\begin{equation}\label{7.15}
\| e^{s \Delta} P_{k}(A_{\geq k - 5} \cdot \nabla \psi_{x}|_{s = 0}) (P_{k} \psi_{x}(s)) \|_{L_{t,x}^{1}} \lesssim C \sqrt{\epsilon} v_k^{2} (1 + s 2^{2k})^{-8}.
\end{equation}
Also, by $(\ref{4.89})$ and the kernel of $e^{s \Delta} P_{k}$ and $e^{s \Delta} \tilde{P}_{k}$,
\begin{equation}\label{7.16}
\aligned
\| e^{s \Delta} P_{k}(e^{s \Delta} A_{\leq k - 5} \cdot \nabla \psi_{x}|_{s = 0}) & (P_{k} \psi_{x}(s)) - e^{s \Delta} \tilde{P}_{k} ( (A_{\leq k - 5} \cdot \nabla P_{k} \psi_{x})|_{s = 0}) (P_{k} \psi_{x}(s)) \|_{L_{t,x}^{1}} \\
&  \lesssim \epsilon v_k^{2} (1 + s 2^{2k})^{-8}.
\endaligned
\end{equation}

Next, adapting the control on the Duhamel integrals in Lemma $5.4$ in \cite{dodsonsmith} and applying Lemma \ref{lem:Adecay},
$$\| P_{k}[\psi_{x}(s) - e^{s \Delta} \psi_{x}(0)] \|_{L_{t,x}^{2}} \lesssim v_{k} (s^{-1/2} + 2^{k})^{-1} (1 + s 2^{2k})^{-3},$$
so
\begin{equation}\label{7.17}
\| e^{s \Delta}(A_{\leq k - 5} \cdot \nabla P_{k} \psi_{x}|_{s = 0}) (P_{k}(\psi_{x}(s) - e^{s \Delta} \psi_{x}|_{s = 0})) \|_{L_{t,x}^{1}} \lesssim \sqrt{\epsilon} v_k^{2} (1 + s 2^{2k})^{-7}.
\end{equation}

Splitting 
\begin{equation}
\aligned
& e^{s \Delta} (A_{\leq k - 5}|_{s = 0} \cdot \nabla P_{k} \psi_{k}|_{s = 0}) \\
&  = A_{\leq k - 5}|_{s = 0} e^{s \Delta} P_{k} \nabla \psi_{x}|_{s = 0} + [e^{s \Delta}, A_{\leq k - 5}|_{s = 0}] \nabla P_{k} \psi_{k}|_{s = 0},
\endaligned
\end{equation}
and following $(\ref{4.90})$,
\begin{equation}\label{7.18}
\aligned
\int |P_{l} \psi_{x}(s')|^{2} \frac{(x - y)_{m}}{|x - y|} \cdot Im[P_{k}(\overline{e^{s \Delta} \psi_{x}|_{s = 0}}) \partial_{m} ((A_{\leq k - 5}|_{s = 0} \cdot \nabla P_{k} (e^{s \Delta} \psi_{x}|_{s = 0})] \\
+ \int |P_{l} \psi_{x}(s')|^{2} \frac{(x - y)_{m}}{|x - y|} \cdot Im[A_{\leq k - 5}|_{s = 0} \cdot \nabla P_{k}(\overline{e^{s \Delta} \psi_{x}|_{s = 0}}) \partial_{m}(P_{k} (e^{s \Delta} \psi_{x}|_{s = 0})] \\
\lesssim \| P_{l} \psi_{x}(s') \|_{L_{t}^{\infty} L_{x}^{2}}^{2} \| \nabla A_{\leq k - 5}|_{s = 0} \|_{L_{t,x}^{2}} \| P_{k}(e^{s \Delta} \psi_{x}|_{s = 0}) \|_{L_{t,x}^{4}}^{2} \\ \lesssim \sqrt{\epsilon} v_l^{2} v_k^{2} (1 + s 2^{2k})^{-8} (1 + s' 2^{2l})^{-8}.
\endaligned
\end{equation}
Computing the kernel of $[e^{s \Delta}, A_{\leq k - 5}]$ using Fourier analysis, if $|\xi| \sim 2^{k}$, $|\eta| \ll |\xi|$,
\begin{equation}\label{7.19}
\aligned
& \int e^{-s|\xi|^{2}} \hat{A}(\eta) \hat{f}(\xi - \eta) d\eta - \int e^{-s|\xi - \eta|^{2}} \hat{A}(\eta) \hat{f}(\xi - \eta) d\eta \\ & \lesssim s e^{-s 2^{2k}} |\xi| \int \hat{A}(\eta) \hat{f}(\xi - \eta) |\eta| d\eta \lesssim 2^{-k} \int e^{-s 2^{2k}} \hat{A}(\eta) \hat{f}(\xi - \eta) d\eta.
\endaligned
\end{equation}
Therefore,
\begin{equation}\label{7.20}
\| (P_{k} \psi_{x}(s)) [e^{s \Delta}, A_{\leq k - 5}|_{s = 0}] (\nabla P_{k} \psi_{x}|_{s = 0}) \|_{L_{t,x}^{1}} \lesssim \sqrt{\epsilon} v_k^{2} (1 + s 2^{2k})^{-8}.
\end{equation}
The contribution of the term $[D_{x}, e^{s \Delta}] = [i A_{x}, e^{s \Delta}]$ in $(\ref{7.10})$ is similar. The main difference is that in $(\ref{7.20})$, $A_{x}$ is fixed at $s = 0$, while in $(\ref{7.10})$, $A_{x}(s)$ depends on $s$. Now then, by $(\ref{eqn:psiL4epsbd})$--$(\ref{bound:At})$,
\begin{equation}\label{7.21}
\aligned
& \| P_{\leq k - 5} (A_{x}(s) - A_{x}(0)) \|_{L_{t,x}^{2}} = \| -P_{\leq k - 5} \int_{0}^{s} Im(\bar{\psi_{x}} (D_{l} \psi_{l})) ds' \|_{L_{t,x}^{2}} \lesssim \\
& s^{1/2} \| \psi_{t} \|_{L_{s}^{2} L_{t,x}^{4}} \| \psi_{x} \|_{L_{s}^{\infty} L_{t,x}^{4}} \lesssim s^{1/2} \sqrt{\epsilon}.
\endaligned
\end{equation}
Therefore,
\begin{equation}\label{7.22}
\aligned
\| P_{k}((P_{\leq k - 5} A_{x}(s) - P_{\leq k - 5} A_{x}(0)) e^{s \Delta} D_{l} \psi_{l}|_{s = 0}) (P_{k} \psi_{x}(s))\|_{L_{t,x}^{1}} \\ \lesssim 2^{k} s^{1/2} v_k^{2}  \sqrt{\epsilon} (1 + s 2^{2k})^{-7} \lesssim \sqrt{\epsilon} v_k^{2} (1 + s 2^{2k})^{-6}.
\endaligned
\end{equation}
Finally, by the triangle inequality,
\begin{equation}\label{7.23}
\aligned
& \| P_{k}(A_{\geq k - 5}(s) e^{s \Delta} \psi_{t}|_{s = 0} - e^{s \Delta} (A_{\geq k - 5}|_{s = 0} \psi_{t}|_{s = 0}) (P_{k} \psi_{x}(s)) \|_{L_{t,x}^{1}} \\ & \leq  \| P_{k}(e^{s \Delta}(A_{\geq k - 5}|_{s = 0} \psi_{t}|_{s = 0})) (P_{k} \psi_{x}(s)) \|_{L_{t,x}^{1}} + \| P_{k}(A_{\geq k - 5}(s) e^{s \Delta} \psi_{t}|_{s = 0}) (P_{k} \psi_{x}(s)) \|_{L_{t,x}^{1}}.
\endaligned
\end{equation}
Combining estimates on the kernel of $P_{k} e^{s \Delta}$ with $(\ref{bound:Ax})$,
\begin{equation}\label{7.24}
\aligned
& \| P_{k}(e^{s \Delta}(A_{\geq k - 5}|_{s = 0} \psi_{t}|_{s = 0}))(P_{k} \psi_{x}(s)) \|_{L_{t,x}^{1}} \\
& \lesssim (1 + s 2^{2k})^{-8} \| A_{\geq k - 5}|_{s = 0} \|_{L_{t,x}^{2}} \| P_{\leq k + 5} \psi_{t} \|_{L_{t,x}^{4}} \| P_{k} \psi_{x}(s) \|_{L_{t,x}^{4}} \\ & + (1 + s 2^{2k})^{-8} \sum_{l \geq k + 5} \| P_{l}(A_{x})|_{s = 0} \|_{L_{t,x}^{2}} \| (P_{l} \psi_{t})(P_{k} \psi_{x}(s)) \|_{L_{t,x}^{2}} \\
& \lesssim (1 + s2^{2k})^{-8} 2^{-k} \sqrt{\epsilon} v_k^{2} \sum_{l \leq k + 5} \| P_{l}(D_{m} \psi_{m}) \|_{L_{t,x}^{4}} \\ &+ (1 + s 2^{2k})^{-8} \sum_{l \geq k + 5} \sqrt{\epsilon} v_l 2^{-l} \| P_{l}(D_{m} \psi_{m})(P_{k} \psi_{x}(s)) \|_{L_{t,x}^{2}} \\
& \lesssim (1 + s 2^{2k})^{-8} \epsilon^{3/2} v_k^{2} + (1 + s 2^{2k})^{-8} \sqrt{\epsilon} \sum_{l \geq k + 5} v_l \| (P_{l} \psi_{m})(P_{k} \psi_{x}(s)) \|_{L_{t,x}^{2}} \\
& + (1 + s 2^{2k})^{-8} \sqrt{\epsilon} \sum_{l \geq k + 5} 2^{\frac{k - l}{2}} v_l v_k \| \psi_{x} \|_{L_{t,x}^{4}} \lesssim \epsilon^{3/2} v_k^{2} (1 + s 2^{2k})^{-8}.
\endaligned
\end{equation}
Similarly, by examining the kernel of $P_{l} e^{s \Delta}$,

\begin{equation}\label{7.25}
\aligned
& \| P_{k}(A_{\geq k - 5}(s) (e^{s \Delta} \psi_{t}|_{s = 0}))(P_{k} \psi_{x}(s)) \|_{L_{t,x}^{1}} \\
& \lesssim  \| A_{\geq k - 5}(s) \|_{L_{t,x}^{2}} \| P_{\leq k + 5} \psi_{t}|_{s = 0} \|_{L_{t,x}^{4}} \| P_{k} \psi_{x}(s) \|_{L_{t,x}^{4}} \\ 
&+ \sum_{l \geq k + 5} (1 + s 2^{2l})^{-8} \| P_{l} A_{x}(s) \|_{L_{t,x}^{2}} \| (P_{l} \psi_{t}|_{s = 0})(P_{k} \psi_{x}(s)) \|_{L_{t,x}^{2}} \\
& \lesssim \sum_{l \geq k - 5} v_{k} v_{l} \epsilon^{2} 2^{k - l} (1 + s 2^{2l})^{-7}   + \epsilon^{3/2} v_k^{2} (1 + s 2^{2k})^{-8} \lesssim \epsilon^{3/2} v_{k}^{2} (1 + s 2^{2k})^{-6}.
\endaligned
\end{equation}
The estimates in $(\ref{7.25})$ follow from the bootstrapping assumption
\begin{equation}\label{7.25.1}
\| P_{k} \psi_{x}(s) \|_{L_{t,x}^{4}} \lesssim v_{k} (1 + s 2^{2k})^{-4},
\end{equation}
and Lemma \ref{lem:Adecay},
\begin{equation}\label{7.25.2}
\| P_{k} A_{x}(s) \|_{L_{t,x}^{4}} \lesssim v_{k} (1 + s 2^{2k})^{-\frac72}.
\end{equation}
 Then by direct computation, we can observe a slightly stronger version than \eqref{bound:Ax}
\begin{equation}\label{7.25.3}
\| P_{k} A_{x}(s) \|_{L_{t,x}^{2}} \lesssim \sqrt{\epsilon} v_{k} (1 + s 2^{2k})^{-3}.
\end{equation}
Indeed,
\begin{equation}\label{7.25.4}
\aligned
& \| P_{k} A_{x}(s) \|_{L_{t,x}^{2}} = \| P_{k} (\int_{s}^{\infty} Im(\overline{\psi_{x}(r)} (\partial_{l} \psi_{l} + i A_{l} \psi_{l})) dr \|_{L_{t,x}^{2}} \\
& \lesssim \int_{s}^{\infty} \| P_{k - 5 \leq \cdot \leq k + 5} \psi_{x}(r) \|_{L_{t,x}^{4}} \| \partial_{l} P_{\leq k - 5} \psi_{x}(r) \|_{L_{t,x}^{4}} + \| P_{k - 5 \leq \cdot \leq k + 5} \psi_{x}(r) \|_{L_{t,x}^{4}} \| A_{x} \|_{L_{t,x}^{\infty}} \| \psi_{x} \|_{L_{t,x}^{4}} dr \\
& + \int_{s}^{\infty} 2^{k} \| P_{\leq k - 5} \psi_{x}(r) \|_{L_{t,x}^{4}} \| P_{k - 5 \leq \cdot \leq k + 5} \psi_{x}(r) \|_{L_{t,x}^{4}} dr \\ 
& + \int_{s}^{\infty} \| P_{\leq k - 5} \psi_{x}(r) \|_{L_{t,x}^{4}} \| A_{x} \|_{L_{t,x}^{\infty}} \| P_{\geq k - 5} \psi_{x} \|_{L_{t,x}^{4}} + \| P_{\leq k - 5} \psi_{x}(r) \|_{L_{t,x}^{4}} \| P_{\geq k - 5} A_{x} \|_{L_{t,x}^{4}} \| \psi_{x} \|_{L_{t,x}^{\infty}} dr \\
& + \sum_{l \geq k - 5} \int_{s}^{\infty} 2^{l} \| P_{l} \psi_{x}(r) \|_{L_{t,x}^{4}}^{2} + \| P_{l} A_{x}(r) \|_{L_{t,x}^{4}}^{2} \| \psi_{x} \|_{L_{t,x}^{\infty}} dr \\
& \lesssim \int_{s}^{\infty} 2^{k} (\epsilon E)^{1/2} v_{k} (1 + r 2^{2k})^{-4} dr + \int_{s}^{\infty} r^{-1/2} (1 + r 2^{2k})^{-\frac72} v_{k} (\epsilon E)^{1/2} dr \\ & + \sum_{l \geq k - 5} \int_{s}^{\infty} 2^{l} v_{l}^{2} (1 + r 2^{2l})^{-8} + r^{-1/2} v_{l}^{2} (1 + r 2^{2l})^{-7} dr \lesssim \sqrt{\epsilon} v_{k} (1 + s 2^{2k})^{-3}.
\endaligned
\end{equation}
The proof of $(\ref{7.5.1})$ is similar, except that we replace the bound for $\psi_{x}(s)$ with the bound in $(\ref{7.33.1})$ on $\psi_{t}(s)$.\medskip

Therefore, collecting $(\ref{7.5})$, $(\ref{7.9})$, $(\ref{7.13})$, $(\ref{7.15})$, $(\ref{7.16})$, $(\ref{7.17})$, $(\ref{7.18})$, $(\ref{7.20})$, $(\ref{7.22})$, $(\ref{7.24})$, and $(\ref{7.25})$,
\begin{equation}\label{7.26}
(\partial_{t} - i \Delta) \psi_{x}(s) = F_{2} = F_{2}^{(1)} + F_{2}^{(2)},
\end{equation}
where plugging $F_{2}^{(1)}$ into $(\ref{3.12})$ yields a contribution of $2^{l - k} v_l^{2} v_k^{2} (1 + s 2^{2k})^{-3} (1 + s' 2^{2l})^{-3}$, and 
\begin{equation}\label{7.27}
F_{2}^{(2)} = \partial_{x}(\psi_{t} - e^{s \Delta} \psi_{t}|_{s = 0}) - i \Delta (\psi_{x} - e^{s \Delta} \psi_{x}|_{s = 0}).
\end{equation}
When computing the contribution of $F_{2}^{(2)}$, it is convenient to rewrite 
$$-2 A \cdot \nabla \psi - (\nabla \cdot A) \psi = -2 \nabla \cdot (A \psi) + (\nabla A) \cdot \psi.$$
By the product rule,
\begin{equation}\label{7.28}
\aligned
& \partial_{x} \int_{0}^{s} e^{(s - s') \Delta} \nabla \cdot (A \psi_{t})(s') ds' - i \Delta \int_{0}^{s} e^{(s - s') \Delta} \nabla \cdot (A \psi_{x})(s') ds' \\
&  = \nabla \cdot \int_{0}^{s} e^{(s - s') \Delta} A  [\partial_{x} \psi_{t} - i \Delta \psi_{x}] ds'
+ \int_{0}^{s} e^{(s - s') \Delta} \nabla \cdot ((\partial_{x} A) \psi_{t})(s') ds' \\
& - i \int_{0}^{s} e^{(s - s') \Delta} \nabla \cdot (\partial_{m} A \partial_{m} \psi_{x})(s') ds' - i \int_{0}^{s} e^{(s - s') \Delta} \nabla \partial_{m} \cdot ((\partial_{m} A) \psi_{x})(s') ds'.
\endaligned
\end{equation}
Examining the proof of Lemma $5.6$ in \cite{dodsonsmith},
\begin{equation}\label{7.29}
\| P_{k} [\partial_{x} \psi_{t}(s) - i \Delta \psi_{x}(s)] \|_{L_{t,x}^{2}} \lesssim v_k 2^{k} (1 + s 2^{2k})^{-4}.
\end{equation}
Therefore, making a tri--linear argument,
\begin{equation}\label{7.30}
\aligned
& \| \nabla \cdot P_{k} \int_{0}^{s} e^{(s - s') \Delta} A[\partial_{x} \psi_{t} - i \Delta \psi_{x}] ds' \|_{L_{t,x}^{4/3}} \lesssim \sum_{l \leq k} \int_{0}^{s} (1 + s 2^{2k})^{-4} \sqrt{\epsilon} v_k v_l 2^{2l} (1 + s' 2^{2l})^{-4} ds' \\
& + 2^{2k} \int_{0}^{s} (1 + (s - s') 2^{2k})^{-4} \sqrt{\epsilon} v_k (1 + s' 2^{2k})^{-4} ds' \\
& \hspace{2cm}    + \sum_{l \geq k} 2^{2k} \int_{0}^{s} (1 + (s - s') 2^{2k})^{-4} \sqrt{\epsilon} v_l v_l (1 + s' 2^{2l})^{-4} \\
& \lesssim \sqrt{\epsilon} v_k + \sqrt{\epsilon} v_k (1 + s 2^{2k})^{-4}.
\endaligned
\end{equation}  
Next, adapting the control on the Duhamel integrals in Lemma $5.4$ in \cite{dodsonsmith} and applying Lemma \ref{lem:Adecay}, we have
\begin{equation}\label{7.31}
\| P_{k}(\psi_{t}(s) - e^{s \Delta} \psi_{t}|_{s = 0}) \|_{L_{t,x}^{2}} \lesssim v_k (1 + s 2^{2k})^{-3},
\end{equation}
and
\begin{equation}\label{7.32}
\aligned
& \| P_{k}(A_{l} \psi_{l}) \|_{L_{t,x}^{2}} \lesssim \| P_{\geq k - 5} A_{x} \|_{L_{t,x}^{2}} \| P_{\leq k - 5} \psi_{x} \|_{L_{t,x}^{\infty}} \\
& + \| P_{\leq k - 5} A_{x} \|_{L_{t,x}^{4}} \| P_{k - 5 \leq \cdot \leq k + 5} \psi_{x} \|_{L_{t,x}^{4}} + \sum_{l \geq k + 5} 2^{k} \| P_{l} A_{x} \|_{L_{t,x}^{2}} \| P_{l} \psi_{x} \|_{L_{t}^{\infty} L_{x}^{2}} \lesssim \epsilon v_k,
\endaligned
\end{equation}
therefore,
\begin{equation}\label{7.33}
\| e^{s \Delta} P_{k} (A_{l} \psi_{l}|_{s = 0}) \|_{L_{t,x}^{2}} \lesssim \epsilon v_k (1 + s 2^{2k})^{-4}.
\end{equation}
As a result,
\begin{equation}\label{7.33.1}
\| P_{k} \psi_{t} \|_{L_{t}^{2} L_{x}^{\infty} + L_{t,x}^{4}} \lesssim v_k 2^{k} (1 + s 2^{2k})^{-4}.
\end{equation}
Hence,
\begin{equation}\label{7.34}
\aligned
& \| (P_{k} \psi_{x}(s)) P_{k}((\partial_{x} A) \psi_{t})(s') \|_{L_{t,x}^{1}} \\
& \lesssim \| P_{k} \psi_{x}(s) \|_{L_{t,x}^{4} \cap L_{t}^{\infty} L_{x}^{2}} \| \partial_{x} A(s') \|_{L_{t,x}^{2}} \| P_{k - 5 \leq \cdot \leq k + 5} \psi_{t}(s') \|_{L_{t}^{2} L_{x}^{\infty} + L_{t,x}^{4}} \\
& + \| P_{k} \psi_{x}(s) \|_{L_{t,x}^{4}} \| P_{k - 5 \leq \cdot \leq k + 5} \partial_{x} A_{x}(s') \|_{L_{t,x}^{2}} \| P_{\leq k - 5} \psi_{t}(s') \|_{L_{t,x}^{4}}
\\
&+ \sum_{l \geq k + 5} \| (P_{k} \psi_{x}(s)) P_{l}(e^{s \Delta} \partial_{x} \psi_{x}|_{s = 0}) \|_{L_{t,x}^{2}} \| \partial_{x} A_{x} \|_{L_{t,x}^{2}} \\
& + \sum_{l \geq k + 5} 2^{k} \| P_{k} \psi_{x}(s) \|_{L_{t}^{\infty} L_{x}^{2}} \| \partial_{x} A \|_{L_{t,x}^{2}} \| P_{l}(\psi_{t}(s') - e^{s' \Delta} (i \partial_{x} \psi_{x}|_{s' = 0}) \|_{L_{t,x}^{2}} \\
& \lesssim 2^{k} \sqrt{\epsilon} v_k^{2} (1 + s 2^{2k})^{-4} (1 + s' 2^{2k})^{-4}  + 2^{k} \sqrt{\epsilon} v_k^{2} (1 + s 2^{2k})^{-8}.
\endaligned
\end{equation}
Therefore,
\begin{equation}\label{7.35}
\aligned
\| (P_{k} \psi_{x}(s)) \int_{0}^{s} e^{(s - s') \Delta} \nabla \cdot P_{k} ((\partial_{x} A) \psi_{t})(s') ds'\|_{L_{t,x}^{1}} \\ \lesssim
\int_{0}^{s} (1 + (s - s') 2^{2k})^{-4} [2^{2k} \sqrt{\epsilon} v_k^{2} (1 + s 2^{2k})^{-4} (1 + s' 2^{2k})^{-4} \\  + 2^{2k} \sqrt{\epsilon} v_k^{2} (1 + s 2^{2k})^{-8} ds'] \lesssim \sqrt{\epsilon} v_k^{2} (1 + s 2^{2k})^{-4}.
\endaligned
\end{equation}
By $(\ref{4.2})$, it is possible to obtain a similar estimate for
\begin{equation}\label{7.36}
\| (P_{k} \psi_{x}(s)) \cdot \int_{0}^{s} e^{(s - s') \Delta} \nabla \cdot P_{k} (\partial_{m} A \partial_{m} \psi_{x})(s') ds' \|_{L_{t,x}^{1}} \lesssim \sqrt{\epsilon} v_k^{2}.
\end{equation}
Next,
\begin{equation}\label{7.37}
\aligned
 & \|(P_{k} \psi_{x}(s)) \cdot \int_{0}^{s} e^{(s - s') \Delta} \nabla \partial_{m} \cdot P_{k}((\partial_{m} A) \psi_{x})(s') ds' \|_{L_{t,x}^{1}} \\
 & \lesssim \| \partial_{x} A \|_{L_{t,x}^{2}} \int_{0}^{s} 2^{2k} (1 + (s - s')2^{2k})^{-4} \| (P_{k} \psi_{x}(s)) \psi_{x}(s') \|_{L_{t,x}^{2}} \lesssim \sqrt{\epsilon} v_k^{2}.
 \endaligned
\end{equation}

The contribution of $\int_{0}^{s} e^{(s - s') \Delta} \partial_{x}((\nabla \cdot A) \psi_{t}) ds'$ and $i \Delta \int_{0}^{s} e^{(s - s') \Delta} ((\nabla \cdot A) \psi_{x}) ds'$ can be handled in a manner identical to $(\ref{7.35})$ and $(\ref{7.37})$. Next, by $(\ref{4.76})$ and $(\ref{4.77})$,
\begin{equation}\label{7.38}
\| P_{k}(A_{x}^{2} \psi_{x} + \psi_{x}^{3})(s') \|_{L_{t,x}^{4/3}} \lesssim \epsilon v_k(1 + s' 2^{2k})^{-4} + \epsilon \sum_{l \geq k + 5} v_l (1 + s' 2^{2l})^{-4}.
\end{equation}
Therefore,
\begin{equation}\label{7.39}
\aligned
& \| (P_{k} \psi_{x}(s)) \cdot \Delta \int_{0}^{s} e^{(s - s') \Delta} P_{k}(A_{x}^{2} \psi_{x} + \psi_{x}^{3}) ds' \|_{L_{t,x}^{1}} \\ 
& \lesssim \epsilon v_k^{2} (1 + s 2^{2k})^{-4} 2^{2k} \int_{0}^{s} (1 + s' 2^{2k})^{-4} (1 + (s - s') 2^{2k})^{-4} ds' \\
& + \epsilon v_k \sum_{l \geq k + 5} v_l (1 + s 2^{2k})^{-4} 2^{2k} \int_{0}^{s} (1 + s' 2^{2l})^{-4} (1 + (s - s') 2^{2k})^{-4} ds' \lesssim \epsilon v_k^{2} (1 + s 2^{2k})^{-6}.
\endaligned
\end{equation}
By a similar calculation,
\begin{equation}\label{7.40}
\aligned
& \| (P_{k} \psi_{x}(s)) \cdot \partial_{x} \int_{0}^{s} e^{(s - s') \Delta} P_{k}((A_{x}^{2} + \psi_{x}^{2})\psi_{t} ds' \|_{L_{t,x}^{1}} \\ 
& \lesssim \epsilon v_k^{2} (1 + s 2^{2k})^{-4} 2^{2k} \int_{0}^{s} (1 + s' 2^{2k})^{-4} (1 + (s - s') 2^{2k})^{-4} ds' \\
& + \epsilon v_k \sum_{l \geq k + 5} v_l (1 + s 2^{2k})^{-4} 2^{k} 2^{l} \int_{0}^{s} (1 + s' 2^{2l})^{-4} (1 + (s - s') 2^{2k})^{-4} ds' \lesssim \epsilon v_k^{2} (1 + s 2^{2k})^{-6}.
\endaligned
\end{equation}
Therefore, under the bootstrap assumption $(\ref{7.2})$, for $l \leq k - 10$,
\begin{equation}\label{7.41}
\| (P_{k} \psi_{x}(s))(P_{l} \psi_{x}(s')) \|_{L_{t,x}^{2}}^{2} \lesssim 2^{l - k} \| P_{k} \psi_{x}(s) \|_{L_{t}^{\infty} L_{x}^{2}} \| P_{l} \psi_{x}(s') \|_{L_{t}^{\infty} L_{x}^{2}} + 2^{l - k} \sqrt{\epsilon} C v_l^{2} v_k^{2},
\end{equation}
which proves the lemma.
\end{proof}

With the bound \eqref{7.2} in hand, we can proceed to establish the decay in $s,s'$.  In particular, recalling our assumption that $\ell \leq k-10$, we claim that
\begin{equation}
\label{7.42}
\| (P_{k} \psi_{x}(s))(P_{l} \psi_{x}(0)) \|_{L_{t,x}^{2}}^{2} \lesssim 2^{l - k} v_k^{2} v_l^{2} (1 + s 2^{2k})^{-3} .
\end{equation}
for $s \geq 0$.  This follows very much in the line of reasoning as in Section $7$ of \cite{dodson2012bilinear} and the tools introduced in Lemma \ref{lem:lem5} above.  In particular, the bound we have proven suffices for $s < 2^{-2k}$.  For $s > 2^{-2k}$, we observe that splitting the time integration into $[0,(1-\delta) s]$ for some $0<\delta<1$ holds using heat kernel decay estimates.  To integrate over $[(1-\delta) s,s]$ for $0<\delta<1$, utilizing
\begin{equation}
\label{7.43}
\| (P_{k} \psi_{x}(s))(P_{l} \psi_{x}(0)) \|_{L_{t,x}^{2}}^{2} \lesssim  v_k^{2} v_l^{2}  (1 + s 2^{2k})^{-3} .
\end{equation}
 as a bootstrapping assumption, we then can extend the result to all $s$.  With this established, we can also integrate in $s'$ in a similar way.

\subsection{Proof of Lemma $\ref{lL4}$} 
\begin{proof}[Proof of Lemma $\ref{lL4}$]
Suppose $v \in V_{\Delta}^{4/3} \subset U_{\Delta}^{2}$, where
\begin{equation}\label{6.30}
\| v \|_{V_{\Delta}^{4/3}}^{2} = 1, \qquad \tilde{P}_{k} v = v.
\end{equation}
Observe that $V_{\Delta}^{4/3} \subset U_{\Delta}^{2}$. Then
\begin{equation}\label{6.31}
\aligned
& \int_{0}^{t} (A_{t} + \sum_{l = 1}^{d} (A_{l}^{2} - i \partial_{l} A_{l}) \psi_{m}, v)_{L^{2}} dt' - \int_{0}^{t} (i \sum_{l = 1}^{d} \psi_{l} Im(\bar{\psi}_{l} \psi_{m}), v)_{L^{2}} dt' \\  & \lesssim \| v \psi_{x} \|_{L_{t,x}^{2}} \| A_{t} \|_{L_{t,x}^{2}}  + \| A_{x} \|_{L_{t,x}^{4}}^{2} \| v \psi_{x} \|_{L_{t,x}^{2}} + \| \psi_{x} \|_{L_{t,x}^{4}}^{2} \| v \psi_{x} \|_{L_{t,x}^{2}} \lesssim \epsilon v_{k}
\endaligned
\end{equation}
Next, following $(\ref{6.23})$,
\begin{equation}\label{6.32}
\aligned
&  \int_{0}^{t} (\sum_{l = 1}^{d} P_{> k - 10} A_{l} \cdot \partial_{l} \psi_{x}, v) dt'  = \sum_{j > k - 10} \int_{0}^{t} (\sum_{l = 1}^{d} P_{j} A_{l} \cdot \partial_{l} \psi_{x}, v) dt' \lesssim \| \nabla A_{x} \|_{L_{t,x}^{2}} \| v \psi_{x} \|_{L_{t,x}^{2}} \\
 &  \lesssim \epsilon v_{k}.
\endaligned
\end{equation}
Now turn to the most difficult term, $\langle v, P_{\leq k - 10} A_{x} \cdot \nabla \psi_{x} \rangle$. It is here that we need the improved estimates in $(\ref{bilinear})$. In this case,
\begin{equation}\label{6.32.1}
\aligned
\langle v, (\nabla P_{k} \psi_{x}) \int_{0}^{\infty} P_{\leq k - 10} Im(\overline{\psi_{x}} (\partial_{l} + iA_{l}) \psi_{l}) ds \rangle \lesssim \sum_{j \leq k - 10} 2^{k - j} \| v (P_{j} \psi_{x}) \|_{L_{t,x}^{2}} \| (P_{k} \psi_{x})(P_{j} \psi_{x}) \|_{L_{t,x}^{2}} \\
+ \epsilon v_{k} + \beta_{k} \sum_{j \leq k} \beta_{j}^{2} + \epsilon v_{k} \sum_{j \leq k} v_{j}^{2}.
\endaligned
\end{equation}
This completes the proof of Lemma $\ref{lL4}$.

\end{proof}

\section{Proof of Theorem 2}
\label{sec:thm2pf}

We are now ready to give a proof of Theorem $\ref{t1.1}$. For the reader's convenience, we recall the theorem here.
\begin{theorem}\label{t6.1}
Let $n = 2$, $\mathcal N$ be a $2n$-dimensional compact K{\"a}hler manifold that is isometrically embedded into $\mathbb{R}^{N}$, and let $Q \in \mathcal N$ be a given point. There exists a sufficiently small constant $\epsilon_{\ast}(\| u_{0} - Q \|_{\dot{H}^{1}}) \ll 1$ such that if $u_{0} \in \mathcal H_{Q}$ satisfies
\begin{equation}\label{6.1}
\| u_{0} - Q \|_{B_{\infty, 2}^{1}} \leq \epsilon_{\ast} \ll 1,
\end{equation}
then $(\ref{1.1})$ with initial data $u_{0}$ evolves into a global unique solution $u \in C(\mathbb{R}; \mathcal H_{Q})$. Moreover, as $|t| \rightarrow \infty$ the solution $u$ converges to a constant map in the sense that
\begin{equation}\label{6.2}
\lim_{|t| \rightarrow \infty} \| u(t) - Q \|_{L_{x}^{\infty}} = 0.
\end{equation}
Furthermore, in the energy space we also have
\begin{equation}\label{6.3}
\lim_{t \rightarrow \infty} \| u(t) - \sum_{j = 1}^{n} Re(e^{it \Delta} h_{+}^{j}) - \sum_{j = 1}^{n} Im(e^{it \Delta} g_{+}^{j}) \|_{\dot{H}^{1}} = 0,
\end{equation}
for some functions $h_{+}^{j}$, $g_{+}^{j} : \mathbb{R}^{2} \rightarrow \mathbb{C}^{N}$ belonging to $\dot{H}^{1}$ with $j = 1, ..., n$.
\end{theorem}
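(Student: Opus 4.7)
The plan is to combine the local well-posedness result of McGahagan (Theorem \ref{t8.1}) with the caloric gauge setup of Section \ref{sec:Gauges} and the three bootstrap lemmas (Lemma \ref{lmass}, Lemma \ref{lbilinear}, Lemma \ref{lL4}) to run a continuity argument that closes the bootstrap bounds \eqref{8.13}--\eqref{8.13.2} on the maximal interval of existence, and then convert the resulting control on $\psi_{x}$ into the statements \eqref{6.2} and \eqref{6.3} about $u$ itself.

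First, I would approximate $u_{0}$ by smooth data $u_{0,N}$ with $\| u_{0,N} - Q \|_{B^{1}_{\infty,2}} \leq 2 \epsilon_{\ast}$ and $u_{0,N} \in H^{3}$, so that Theorem \ref{t8.1} furnishes an $H^{3}$ solution $u_{N}$ on a short interval $[-T_{N}, T_{N}]$. On this interval the hypotheses of Proposition \ref{p5.1} hold, so the caloric gauge exists and yields $\psi_{m}$, $A_{m}$ satisfying \eqref{4.9} together with the smallness transfer $\sup_{k} \| P_{k} \psi_{x}(0) \|_{L^{2}} \lesssim \epsilon_{\ast}$. Since $u_{N}(0) \in H^{3}$, the bound \eqref{8.4} gives $\| \psi_{x} \|_{\ell^{2} L^{4}_{t,x}} \leq \epsilon$ for $T_{N}$ small, which is exactly the hypothesis needed to invoke Theorem $4.3$ of \cite{dodsonsmith} and thereby obtain the initial bilinear bounds \eqref{8.6}--\eqref{8.6.1} and the $H^{3}$ control \eqref{8.8}--\eqref{8.7} on $[-T_{N}, T_{N}]$.

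Next, I would set up the continuity/bootstrap scheme exactly as outlined between \eqref{8.9} and \eqref{8.14.2}: on any interval $I$ containing the origin on which \eqref{8.13}--\eqref{8.13.2} hold with the large constant $C$, the three lemmas improve these bounds to \eqref{8.14}--\eqref{8.14.2} with the constant $1$. Indeed, Lemma \ref{lmass} improves the pointwise-in-time $L^{2}$ bound on $P_{k} \psi_{x}$ and hence upgrades \eqref{8.14.1} and \eqref{8.14.2}; Lemma \ref{lbilinear} improves the bilinear bound to \eqref{8.14}; and Lemma \ref{lL4} improves the $\ell^{2} L^{4}_{t,x}$ bound and therefore \eqref{8.14.0}. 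Since $H^{3}$ persistence and openness of the bootstrap region together show that the set of times where the bootstrap closes is open, closed, and nonempty in the maximal interval of existence, this gives the estimates uniformly in $N$ on all of $\mathbb{R}$, and then $u_{N}$ converges (up to frequency-localized pieces, as in Section \ref{sec:wuscat}) to a global solution $u \in C(\mathbb{R}; \mathcal{H}_{Q})$ with $\sup_{t} \| u(t) - Q \|_{H^{\sigma}} < \infty$ for each $\sigma$.

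The main obstacle will be closing Lemma \ref{lbilinear} through the interaction Morawetz computation applied to \eqref{4.9}: one must control the forcing terms $A_{\ell} \partial_{\ell} \psi_{m}$, $A_{0} \psi_{m}$, $(\partial_{\ell} A_{\ell}) \psi_{m}$ and the cubic $\psi_{\ell} \operatorname{Im}(\overline{\psi_{\ell}} \psi_{m})$ in the setting of a Besov-critical frequency envelope, and in particular to beat the logarithmic losses that appear in the high-low paraproduct decomposition when $A$ is replaced by $P_{\leq k-5} A$ in order to commute it with $P_{k}$. The calculations sketched in \eqref{7.3}--\eqref{7.41} handle this, with the decisive input being the caloric-gauge decay of $A_{t} = A_{t}^{(1)} + A_{t}^{(2)}$ via \eqref{bound:At12} and the heat-flow estimates of Lemma \ref{lem:lem5} and Lemma \ref{lem:Adecay}; these must be combined with the $L^{4}_{t,x}$ smallness from \eqref{eqn:psiL4epsbd} to produce a factor of $\sqrt{\epsilon}$ on every error term, which is what converts the bootstrap with constant $C$ into the improved bound with constant $1$.

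Finally, once the bootstrap closes and \eqref{8.14.0} furnishes $\sum_{k} \| P_{k} \psi_{x} \|_{L^{4}_{t,x}(\mathbb{R} \times \mathbb{R}^{2})}^{2} < \infty$, I would deduce scattering at the level of $\psi_{x}$: arguing as in Section \ref{sec:wuscat} with $U^{p}/V^{p}$ duality applied to \eqref{4.9}, the profile $e^{-it\Delta} \psi_{x}(t)$ converges in $L^{2}$ to some $\psi_{x,+}$ as $t \to \infty$, which gives the $L^{2}$ scattering on the gauged system. This implies \eqref{6.3} by the transfer argument in Section $7.5$ of \cite{li2018global} leading to equation $(7.29)$ there, once one identifies $h_{+}^{j}$ and $g_{+}^{j}$ with the real and imaginary components of the limiting $\psi_{x,+}$ read off in the asymptotic frame $E_{\infty}$. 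The uniform convergence \eqref{6.2} then follows from the scattering in $\dot{H}^{1}$ together with the endpoint Sobolev--type dispersive estimate for $e^{it\Delta}$ on Besov data, so that $\| u(t) - Q \|_{L^{\infty}_{x}} \to 0$ as $|t| \to \infty$.
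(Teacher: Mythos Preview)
Your proposal is correct and follows essentially the same approach as the paper: local existence via McGahagan, caloric gauge setup, bootstrap closure via the three Lemmas (mass, bilinear, $L^4$) driven by the interaction Morawetz computation on \eqref{4.9}, and then transfer of $L^2$ scattering for $\psi_x$ to \eqref{6.3} via Section~7.5 of \cite{li2018global}. One minor remark: since the hypothesis already places $u_0 \in \mathcal H_Q = \cap_k H^k_Q$, the smooth approximation step $u_{0,N}$ is unnecessary for the theorem as stated, though it would be the right move for a rough-data extension.
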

The proof of Theorem $\ref{t1.1}$ uses perturbative arguments when $\epsilon_{\ast}$ in $(\ref{5.1})$ is small. Recall that the derivative solution to $(\ref{1.1})$ satisfies the equation,
\begin{equation}\label{6.4}
(i \partial_{t} + \Delta) \psi_{m} = -2i \sum_{l = 1}^{d} A_{l} \partial_{l} \psi_{m} + (A_{t} + \sum_{l = 1}^{d} (A_{l}^{2} - i \partial_{l} A_{l})) \psi_{m} - i \sum_{l = 1}^{d} \psi_{l} Im(\bar{\psi}_{l} \psi_{m}).
\end{equation}
The terms
\begin{equation}\label{6.5}
(A_{t} + \sum_{l = 1}^{d} (A_{l}^{2} - i \partial_{l} A_{l})) \psi_{m} - i \sum_{l = 1}^{d} \psi_{l} Im(\bar{\psi}_{l} \psi_{m}),
\end{equation}
may be treated as cubic terms. Indeed, recall from Section $\ref{sec:Gauges}$ that $A_{d + 1} \in L_{t,x}^{2}$, $A_{x} \in L_{t,x}^{4}$ and $\partial_{l} A_{l} \in L_{t,x}^{2}$. Therefore, if we somehow could choose a gauge to make the term $-2i \sum_{l = 1}^{d} A_{l} \partial_{l} \psi_{m}$ negligible, it would probably be possible to prove a result analogous to Theorem $\ref{t2.1}$ for $(\ref{6.4})$.\medskip

Because the term
\begin{equation}\label{6.6}
-2i \sum_{l = 1}^{d} A_{l} \partial_{l} \psi_{m},
\end{equation}
is quasilinear, such computations are more technically difficult, and we will utilize the bilinear estimates from \cite{ifrim2023global}, \cite{ifrim2023long}, and \cite{ifrim2023global2} for quasilinear problems.\medskip

Recall from the Theorem $\ref{t3.1}$ and Section $\ref{sec:warmup}$ that the bilinear estimate is bounded by the energy at the frequencies estimated, $\sup_{t} \| P_{k} \psi_{m}(t) \|_{L^{2}} \| P_{j} \psi_{m} \|_{L^{2}}$, and the contributions of the nonlinear terms. When $\psi_{x}$ solves a nonlinear equation, $\| P_{k} \psi_{x} \|_{L^{2}}$ is no longer a conserved quantity, Therefore,
\begin{lemma}\label{l6.2}
For any $t$,
\begin{equation}\label{6.9}
\aligned
& \| P_{k} \psi_{m}(t) \|_{L^{2}}^{2} = \| P_{k} \psi_{m}(0) \|_{L^{2}}^{2} + \int_{0}^{t} (-2i P_{k}(\sum_{l = 1}^{d} A_{l} \partial_{l} \psi_{m}), P_{k} \psi_{m})_{L^{2}} dt' \\
& + \int_{0}^{t} (P_{k}(A_{t} + \sum_{l = 1}^{d} (A_{l}^{2} - i \partial_{l} A_{l})) \psi_{m}, P_{k} \psi_{m})_{L^{2}} dt' - \int_{0}^{t} (i P_{k} \sum_{l = 1}^{d} \psi_{l} Im(\bar{\psi}_{l} \psi_{m}), P_{k} \psi_{m})_{L^{2}} dt'.
\endaligned
\end{equation}
\end{lemma}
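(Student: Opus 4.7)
The plan is to derive the identity by a direct $L^2$ energy computation on a single Littlewood--Paley block. First, I would apply $P_k$ to the equation $(\ref{6.4})$ satisfied by $\psi_m$, yielding
\[
(i\partial_t + \Delta) P_k \psi_m = -2i P_k\Bigl(\sum_{l=1}^d A_l \partial_l \psi_m\Bigr) + P_k\Bigl(\bigl(A_t + \sum_{l=1}^d (A_l^2 - i \partial_l A_l)\bigr)\psi_m\Bigr) - i P_k\Bigl(\sum_{l=1}^d \psi_l \,\mathrm{Im}(\bar{\psi}_l \psi_m)\Bigr).
\]

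Next, I would pair this equation with $P_k \psi_m$ in $L^2$ and take imaginary parts. Since $(\Delta P_k \psi_m, P_k \psi_m)_{L^2} = -\|\nabla P_k \psi_m\|_{L^2}^2$ is real, its contribution on the left-hand side vanishes; meanwhile the imaginary part of $(i\partial_t P_k \psi_m, P_k \psi_m)_{L^2}$ equals $\mathrm{Re}(\partial_t P_k \psi_m, P_k \psi_m)_{L^2} = \tfrac{1}{2}\tfrac{d}{dt}\|P_k \psi_m\|_{L^2}^2$. This yields the pointwise-in-time identity
\[
\tfrac{1}{2}\tfrac{d}{dt}\|P_k \psi_m\|_{L^2}^2 = \mathrm{Im}\bigl(P_k G(t),\, P_k \psi_m(t)\bigr)_{L^2},
\]
where $G$ denotes the full right-hand side of $(\ref{6.4})$. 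Integrating in time from $0$ to $t$ via the fundamental theorem of calculus reassembles the three integrals displayed in the statement: the explicit factors $-2i$ and $-i$ appearing inside $G$ are retained inside the inner products, and the sum of the three resulting complex integrals is automatically real, equal to $\|P_k\psi_m(t)\|_{L^2}^2 - \|P_k \psi_m(0)\|_{L^2}^2$, because its imaginary parts must cancel.

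The only justification needed for exchanging $\partial_t$ with the spatial integral and for integrating by parts in the Laplacian term is the regularity of $\psi_m$: on any subinterval on which the bootstrap assumptions $(\ref{8.13})$--$(\ref{8.13.2})$ are in force, Theorem $\ref{t8.1}$ together with $(\ref{8.13.2})$ guarantees that $\psi_m \in L^\infty_t H^3_x$, so all manipulations are classical. In this sense there is no substantive obstacle; the lemma functions primarily as a bookkeeping statement that isolates the three source terms whose contributions must be controlled in the subsequent mass, bilinear, and $L^4$ analysis of Lemmas $\ref{lmass}$ through $\ref{lL4}$.
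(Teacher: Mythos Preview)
Your proposal is correct and matches the paper's approach: the lemma is stated in the paper without a separate proof, since it follows immediately from differentiating $\|P_k\psi_m\|_{L^2}^2$, substituting equation $(\ref{6.4})$, and integrating---exactly as you describe (and as the paper does earlier in $(\ref{5.1})$--$(\ref{5.2})$). One minor remark: the identity as written in the lemma is really $\tfrac{d}{dt}\|P_k\psi_m\|_{L^2}^2 = 2\,\mathrm{Im}\bigl(P_kG,\,P_k\psi_m\bigr)_{L^2}$, so the displayed right-hand side should be understood with an implicit $2\,\mathrm{Im}$ (the paper is informal about this, and since all subsequent uses bound the terms in absolute value, the distinction is immaterial); your phrasing ``automatically real because imaginary parts cancel'' slightly obscures this, but the derivation itself is fine.
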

Since the terms in the Morawetz estimate are similar, we postpone estimating the terms in $(\ref{6.9})$ until we state the terms arising in the Morawetz estimate. Take
\begin{align}
\label{6.10}
M(t) &  = \int |P_{j} \psi_{y}(t, y)|^{2} \frac{(x - y)}{|x - y|} \cdot Im[\overline{P_{k} \psi_{x}(t,x)} \nabla P_{k} \psi_{x}(t,x)]  \\
& + \int |P_{k} \psi_{y}(t, y)|^{2} \frac{(x - y)}{|x - y|} \cdot Im[P_{j} \overline{\psi_{x}(t,x)} \nabla P_{j} \psi_{x}(t,x)]. \notag
\end{align}

Compute
\begin{equation}\label{6.11}
\frac{d}{dt} M(t) = 2 \int_{(x - y) \cdot \omega = 0} |\partial_{\omega}(P_{k} \overline{\psi_{x}(t, x)} P_{j} \psi_{y}(t,y))|^{2} dx dy d\omega
\end{equation}
\begin{align}\label{6.12}
& -2 \int |P_{j} \psi_{y}(t, y)|^{2} \frac{(x - y)}{|x - y|} \cdot Re[P_{k}(\sum_{l = 1}^{d} \overline{A_{l} \partial_{l} \psi_{x}}) \nabla P_{k} \psi_{x}]  \\
& \hspace{1cm} - 2 \int |P_{j} \psi_{y}(t, y)|^{2} \frac{(x - y)}{|x - y|} \cdot Re[\overline{P_{k} \psi_{x}} \nabla P_{k}(\sum_{l = 1}^{d} A_{l} \partial_{l} \psi_{x})] \notag
\end{align}
\begin{equation}\label{6.13}
+ \int |P_{j} \psi_{y}(t, y)|^{2} \frac{(x - y)}{|x - y|} \cdot Re[P_{k}(\overline{A_{t} + \sum_{l = 1}^{d} (A_{l}^{2} - i \partial_{l} A_{l}) \psi_{x})} \nabla P_{k} \psi_{x}]
\end{equation}
\begin{equation}\label{6.14}
+ \int |P_{j} \psi_{y}(t, y)|^{2} \frac{(x - y)}{|x - y|} \cdot Re[P_{k}(\sum_{l = 1}^{d} \psi_{l} Im(\psi_{l} \psi_{x})) \nabla \overline{P_{k} \psi_{x}}]
\end{equation}
\begin{align}\label{6.12.1}
& -2 \int |P_{k} \psi_{y}(t, y)|^{2} \frac{(x - y)}{|x - y|} \cdot Re[P_{j}(\sum_{l = 1}^{d} \overline{A_{l} \partial_{l} \psi_{x}}) \nabla P_{j} \psi_{x}]  \\
& \hspace{1cm} - 2 \int |P_{k} \psi_{y}(t, y)|^{2} \frac{(x - y)}{|x - y|} \cdot Re[\overline{P_{j} \psi_{x}} \nabla P_{j}(\sum_{l = 1}^{d} A_{l} \partial_{l} \psi_{x})] \notag
\end{align}
\begin{equation}\label{6.13.1}
+ \int |P_{k} \psi_{y}(t, y)|^{2} \frac{(x - y)}{|x - y|} \cdot Re[P_{j}(\overline{A_{t} + \sum_{l = 1}^{d} (A_{l}^{2} - i \partial_{l} A_{l}) \psi_{x})} \nabla P_{j} \psi_{x}]
\end{equation}
\begin{equation}\label{6.14.1}
+ \int |P_{k} \psi_{y}(t, y)|^{2} \frac{(x - y)}{|x - y|} \cdot Re[P_{j}(\sum_{l = 1}^{d} \psi_{l} Im(\psi_{l} \psi_{x})) \nabla \overline{P_{j} \psi_{x}}].
\end{equation}

Now make the following bootstrap assumptions,
\begin{equation}\label{6.15}
\aligned
\sup_{|j - k| \geq 10} 2^{\frac{|j - k|}{2}} \| (P_{j} \psi_{x})(P_{k} \psi_{x}) \|_{L_{t,x}^{2}} \leq C \epsilon^{2}, \\
\sup_{k} \| P_{k} \psi_{x} \|_{L_{t}^{\infty} L_{x}^{2}} \leq C \epsilon, \\
\| P_{k} \psi_{x} \|_{L_{t,x}^{4}} \leq C(E) v_k.
\endaligned
\end{equation}

Now then, first observe that by $(\ref{eq:L4norm})$, under the bootstrap assumptions in $(\ref{6.15})$,
\begin{equation}\label{6.15.1}
\| \psi_{x} \|_{L_{t,x}^{4}}^{4} \lesssim C(E) \epsilon^{2}.
\end{equation}

Now then, computing the bilinear estimate, suppose without loss of generality that $j \leq k - 10$.
\begin{equation}\label{6.16}
\aligned
& (\ref{6.14}) \lesssim \\
&  2^{k} \| P_{j} \psi_{x} \|_{L_{t}^{\infty} L_{x}^{2}}^{2} \sum_{|j_{3} - k| \leq |j_{2} - k| \leq |j_{1} - k|} \| (P_{k} \psi_{x})(P_{j_{1}} \psi_{x}) \|_{L_{t,x}^{2}} \| P_{j_{2}} \psi_{x} \|_{L_{t,x}^{4}} \| P_{j_{3}} \psi_{x} \|_{L_{t,x}^{4}} \lesssim 2^{k} \epsilon^{5} C(E).
\endaligned
\end{equation}
Next,
\begin{equation}\label{6.17}
\aligned
(\ref{6.13}) & \lesssim 2^{k} \| P_{j} \psi_{x} \|_{L_{t}^{\infty} L_{x}^{2}}^{2} \| (P_{k} \psi_{x}) P_{k}(A_{l}^{2} \psi_{x}) \|_{L_{t,x}^{1}} + 2^{k} \| P_{j} \psi_{x} \|_{L_{t}^{\infty} L_{x}^{2}}^{2} \| (P_{k} \psi_{x}) P_{k}(\nabla \cdot A \psi_{x}) \|_{L_{t,x}^{1}} \\
& + 2^{k} \| P_{j} \psi_{x} \|_{L_{t}^{\infty} L_{x}^{2}}^{2} \| (P_{k} \psi_{x}) P_{k}(A_{t} \psi_{x}) \|_{L_{t,x}^{1}}.
\endaligned
\end{equation}
By $(\ref{eqn:Abdassumps_alt})$-\eqref{bound:A} and $(\ref{eqn:bootstrap_assumptions})$,
\begin{equation}\label{6.18}
2^{k} \| P_{l} \psi_{x} \|_{L_{t}^{\infty} L_{x}^{2}}^{2} \| (P_{k} \psi_{x}) P_{k}(A_{l}^{2} \psi_{x}) \|_{L_{t,x}^{1}} \lesssim \epsilon^{2} \| (P_{k} \psi_{x}) \psi_{x} \|_{L_{t,x}^{2}} \| A_{x} \|_{L_{t,x}^{4}}^{2} \lesssim 2^{k} C(E) \epsilon^{5},
\end{equation}
\begin{equation}\label{6.19}
2^{k} \| P_{l} \psi_{x} \|_{L_{t}^{\infty} L_{x}^{2}}^{2} \| (P_{k} \psi_{x}) P_{k}(\nabla \cdot A \psi_{x}) \|_{L_{t,x}^{1}} \lesssim \epsilon^{2} \| (P_{k} \psi_{x}) \psi_{x} \|_{L_{t,x}^{2}} \| \nabla \cdot A_{x} \|_{L_{t,x}^{2}} \lesssim 2^{k} C(E) \epsilon^{5}.
\end{equation}
\begin{equation}\label{6.20}
2^{k} \| P_{l} \psi_{x} \|_{L_{t}^{\infty} L_{x}^{2}}^{2} \| (P_{k} \psi_{x}) P_{k}(A_{t} \psi_{x}) \|_{L_{t,x}^{1}} \lesssim \epsilon^{2} \| (P_{k} \psi_{x}) \psi_{x} \|_{L_{t,x}^{2}} \| A_{t} \|_{L_{t,x}^{2}} \lesssim 2^{k} C(E) \epsilon^{4}.
\end{equation}
Following similar calculations,
\begin{equation}\label{6.20.1}
(\ref{6.14.1}) + (\ref{6.13.1}) \lesssim 2^{k} C(E) \epsilon^{4},
\end{equation}
and
\begin{equation}\label{6.21}
(P_{k}(A_{t} + \sum_{l = 1}^{d} (A_{l}^{2} - i \partial_{l} A_{l})) \psi_{m}, P_{k} \psi_{m})_{L^{2}} - (i P_{k} \sum_{l = 1}^{d} \psi_{l} Im(\bar{\psi}_{l} \psi_{m}), P_{k} \psi_{m})_{L^{2}} \lesssim C(E)\epsilon^{3}.
\end{equation}

Now split
\begin{equation}\label{6.22}
A = P_{\leq k - 5} A + P_{> k - 5} A.
\end{equation}
By $(\ref{eqn:Abdassumps_alt})$-\eqref{bound:A}, computing $(\ref{6.12})$ with $A$ replaced by $P_{> k - 5} A$, under the bootstrap assumption,
\begin{equation}\label{6.23}
\lesssim 2^{k} \| P_{j} \psi_{x} \|_{L_{t}^{\infty} L_{x}^{2}}^{2} \sum_{l \geq k - 5} \| P_{l} A_{x} \|_{L_{t,x}^{2}} \| (\nabla P_{\leq l + 10} \psi_{x})(P_{k} \psi_{x}) \|_{L_{t,x}^{2}} \lesssim C(E) \epsilon^{4}.
\end{equation}

Finally, computing the commutator, if $\tilde{P}_{k} = P_{k - 2} + P_{k - 1} + P_{k} + P_{k + 1} + P_{k + 2}$,
\begin{equation}\label{6.25}
P_{k} ((P_{\leq k - 5} A) \psi_{x}) = P_{k}((P_{\leq k - 5} A) \tilde{P}_{k} \psi_{x}) = (P_{\leq k - 5} A) P_{k} \psi_{x} + [P_{k}, P_{\leq k - 5} A] (\tilde{P}_{k} \psi_{x}).
\end{equation}

\begin{remark}
The estimates with $j$ and $k$ switched are similar.
\end{remark}
By similar computations,
\begin{equation}\label{6.27}
\int_{0}^{t} (-2i P_{k}(\sum_{l = 1}^{d} A_{l} \partial_{l} \psi_{m}), P_{k} \psi_{m})_{L^{2}} dt' \lesssim C(E) \epsilon^{3}.
\end{equation}
Therefore, we have proved, under the bootstrap assumption $(\ref{eqn:bootstrap_assumptions})$,
\begin{equation}\label{6.28}
\| P_{k} \psi_{x}(t) \|_{L^{2}}^{2} \leq \| P_{k} \psi_{x}(0) \|_{L^{2}}^{2} + O(C(E) \epsilon^{3}),
\end{equation}
and
\begin{equation}\label{6.29}
\sup_{|j - k| \geq 10} 2^{\frac{|j - k|}{2}} \| (P_{j} \psi_{x})(P_{k} \psi_{x}) \|_{L_{t,x}^{2}}^{2} \leq \sup_{j} \| P_{j} \psi_{x}(0) \|_{L^{2}}^{4} + O(C(E) \epsilon^{4}).
\end{equation}
Now then, under the Besov norm assumption on the initial data, $(\ref{1.14})$, the first two terms in bootstrap $(\ref{6.15})$ are confirmed.\medskip

Next, observe that by similar computations, under the bootstrap assumptions $(\ref{6.15})$, we can show
\begin{equation}\label{6.29.1}
\sup_{|j - k| \geq 10} 2^{\frac{|j - k|}{2}} \| (P_{j} \psi_{x}) (P_{k} \psi_{x}) \|_{L_{t,x}^{2}} \leq C v_k v_j ,
\end{equation}
and 
\begin{equation}
\label{6.29.2}
\sup_{k} \| P_{k} \psi_{x} \|_{L_{t}^{\infty} L_{x}^{2}} \leq C v_k.
\end{equation}

We use $(\ref{6.29.1})$ and $(\ref{6.29.2})$ to prove an improved bound on $\| P_{k} \psi_{x} \|_{L_{t,x}^{4}}$.

\section*{Acknowledgements}
B.D. gratefully acknowledges support from NSF Analysis Grant DMS--2153750. J.L.M. gratefully acknowledges support from NSF Applied Math Grant DMS--2307384.

\bibliography{biblio}
\bibliographystyle{alpha}

\end{document}